\newcommand{\cmt}[1]{}
\begin{document}
\title{Local newforms and spherical characters for unitary groups}
\author{Gefei Dang}

\begin{abstract} 
We prove a smooth transfer statement analogous to Jacquet--Rallis's fundamental lemma, use it to compute a local spherical character appearing in the Ichino--Ikeda conjecture, and prove a statement on the existence of local newforms for unitary groups as a corollary.  
\end{abstract}

\maketitle 

\tableofcontents

\section{Introduction}
\label{section_intro}

Let $p\neq 2$ be an odd prime. Let $\index{F@$F$}F$ be a finite extension of $\Q_p$ and \index{E@$E$}$E/F$ be an unramified quadratic extension of fields. Denote by \index{qe@$q_E$} \index{qf@$q_F$} $q_E$ and $q_F$ the sizes of the residue fields of $\roi_E$ and $\roi_F$ respectively. Assume $q_F>n$ for technical reasons. Our first main result is the following.

\begin{thm}
\label{exist_newform}
Let $V$ be a hermitian space over $E$. In every tempered Vogan $L$-packet of $U(V)(F)$ there exists a unique representation that contains newforms.
\end{thm}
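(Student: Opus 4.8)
The plan is to reduce Theorem~\ref{exist_newform} to the local Gan--Gross--Prasad dichotomy by means of the local spherical character computed in the body of the paper. Fix the tempered $L$-parameter $\phi$ underlying the packet in question; its Vogan packet $\Pi_\phi$ consists of the irreducible tempered representations of $U(V)(F)$ and of $U(V')(F)$, where $V'$ is the hermitian space of the same dimension as $V$ but opposite discriminant, and these representations are parametrised by the characters of the component group $S_\phi$. Let $V^\flat\subset V$ be a non-degenerate subspace of codimension one, so that $U(V^\flat)\hookrightarrow U(V)$ places us in the Bessel case of the Gan--Gross--Prasad problem, and recall that a newform of a member of $\Pi_\phi$ is a non-zero vector fixed by some term $K_m$ of a distinguished decreasing chain $K_0\supset K_1\supset\cdots$ of compact open subgroups of $U(V)(F)$ built from a lattice chain adapted to the flag $V\supset V^\flat$. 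The input I would draw on from the earlier sections is the explicit formula obtained there for the value on $K_m$-fixed vectors of the local spherical character $\alpha_\pi$ attached to the pair $(U(V),U(V^\flat))$ in the Ichino--Ikeda conjecture.

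The first step is to prove, for $\pi\in\Pi_\phi$, the equivalence between ``$\pi$ contains a newform'' and ``$\pi$ is distinguished for $(U(V),U(V^\flat))$'', that is $\alpha_\pi\not\equiv 0$. One implication is formal: if $\pi$ contains a newform, then evaluating the matrix-coefficient integral defining $\alpha_\pi$ on the corresponding vectors and invoking the explicit formula --- a product of abelian local $L$- and $\varepsilon$-factors that does not vanish precisely because $q_F>n$ --- shows that $\alpha_\pi$ is not identically zero. For the converse I would use that this same formula, which is derived by transferring the suitably normalised characteristic function of $K_m$ to the general linear side via the new smooth transfer statement and the Jacquet--Rallis fundamental lemma, identifies the $U(V^\flat)$-period of $\pi$ (paired with the distinguished member of the corresponding packet on $U(V^\flat)$) on $K_m$-fixed vectors with a classical local Rankin--Selberg integral of $GL$-newforms in the sense of Casselman and Jacquet--Piatetski-Shapiro--Shalika; since that integral equals a local $L$-value of tempered generic representations it is non-zero, so the space of $K_m$-fixed vectors in $\pi$ cannot vanish once $\pi$ is distinguished.

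Granting this equivalence, Theorem~\ref{exist_newform} becomes the assertion that exactly one member of the Vogan packet $\Pi_\phi$ is distinguished for the pair $(U(V),U(V^\flat))$. For tempered parameters this is precisely the local Gan--Gross--Prasad theorem for unitary groups (Beuzart-Plessis): the sum over the Vogan packet of the multiplicities $\dim\mathrm{Hom}_{U(V^\flat)}(\pi,\sigma)$ equals $1$, with the labelling of the distinguished member by a character of $S_\phi$ prescribed by the $\varepsilon$-factor recipe of Gan--Gross--Prasad. Hence there is exactly one representation in $\Pi_\phi$ containing newforms, which is what we had to show.

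The main obstacle is the explicit evaluation of $\alpha_\pi$ on newform vectors that underlies the whole argument and constitutes the technical heart of the earlier sections. It rests on two points. First, the new smooth transfer statement: one must match the orbital integrals of the characteristic function of $K_m$ with those of an explicit test function on the general linear side adapted to $GL$-newforms, and the comparison at the singular and non-regular locus is the delicate part. Second, the Jacquet--Rallis fundamental lemma, needed both to evaluate the unramified orbital integrals and to pin down the comparison constant. A further subtlety, needed to ensure that the distinguished member really is the one carrying newforms rather than some other member of $\Pi_\phi$, is the compatibility of the $\varepsilon$-factor produced by the spherical character computation with the one entering the Gan--Gross--Prasad recipe; temperedness is what makes the dichotomy a clean statement, and the hypothesis $q_F>n$ is what keeps the abelian $L$-factors controlling the comparison free of spurious zeros.
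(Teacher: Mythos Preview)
Your overall strategy---reduce to Gan--Gross--Prasad via the explicit spherical character---is the paper's strategy, but two steps do not go through as you describe them.

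First, your forward implication ``$\pi$ contains a newform $\Rightarrow$ $\pi$ is distinguished'' is circular. The explicit formula for $J_\pi(\id_{K^c})$ is obtained via Beuzart-Plessis's comparison (Theorem~\ref{thm:BP_I-J}), and that theorem takes $H$-distinction of $\pi$ as a \emph{hypothesis}; you cannot invoke the formula to deduce distinction. The paper instead argues directly: given a newform $v\in\pi_{n+1}$, the matrix coefficient $g\mapsto\langle\pi_{n+1}(g)v,v\rangle$ is nonzero at $1$, and a Plancherel/spectral argument (as in Gan--Savin) produces a tempered $\pi_n$ of $U(W)$ with $\int_{U(W)}\langle\pi_{n+1}(h)v,v\rangle\overline{\langle\pi_n(h)w,w\rangle}\,dh\neq 0$; averaging $w$ over $K_n$ forces $\pi_n$ to be unramified. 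This is the missing idea.

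Second, and more seriously for uniqueness: distinction in GGP is relative to a choice of representation $\sigma$ on $U(V^\flat)$, and your argument never fixes $\sigma$. A priori, different unramified $\pi_n$ could single out different members $\pi_{n+1}$ of the packet. The paper closes this gap (in the even-dimensional case; the odd case is quoted from Atobe--Oi--Yasuda) by computing the GGP character $\chi_N$ on $A_M$: any unramified $L$-parameter $N$ of $U(W)$ decomposes as $N_1\oplus{}^\sigma N_1^\vee\oplus\mathrm{triv}$, whence $\chi_N(a)=\epsilon(M^a\otimes N,\psi)=\epsilon(M^a,\psi)$ is independent of $N$. Without this step your uniqueness claim is incomplete. (Two smaller points: the hypothesis $q_F>n$ is not about nonvanishing of $L$-factors---tempered $L$-values at $s=\tfrac12$ and Asai factors at $s=1$ are nonzero unconditionally---but is used in the Cayley-transform step of the transfer lemma; and the groups $K^c_{n+1}$ do not form a decreasing chain in $c$, as the underlying lattice changes with $c$.)
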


When $V$ is odd-dimensional, \cite{Atobe-Oi-Yasuda} showed that this unique representation containing newforms is the generic one. When $\dim V=2m$ is even, our proof shows that this representation is determined by local root numbers and is the $\pi$ such that $\Hom_{U(1)\ltimes N}(\pi,\C)\neq 0$ appearing in the (known) Gan--Gross--Prasad conjecture for $U(1) \times U(2m)$, where $U(1) \ltimes N$ is the Bessel subgroup defined in \cite[Section 12]{GGP} (cf. loc. cit. Section 17).

There is no uniform definition of newforms for unitary groups and the precise definition of our newforms is given right before Theorem \ref{thm:mainthm}. Newforms are the analog of spherical elements in unramified representations for ramified representations, and various work has been done on newforms for different groups. For example, Jacquet, Piatetski-Shapiro, and Shalika \cite{J-PS-S} established the theory of newforms for $GL_n$, which was subsequently generalized by Atobe, Kondo, and Yasuda \cite{atobe_kondo_yasuda_2022}. Roberts and Schmidt \cite{Roberts-Schmidt} have studied newforms for $\text{GSp}_4$ and Tsai \cite{Tsai} has studied those for split special odd orthogonal groups.
Lansky and Raghuram \cite{Lansky-Raghuram} established the newforms theory for $U(1,1)$.
Atobe, Oi, and Yasuda \cite{Atobe-Oi-Yasuda} recently showed the existence and uniqueness (up to scalar) of local newforms for tempered generic representations of odd unitary groups. Later, Atobe \cite{atobe2023} studied newforms (which are defined differently from us) for quasi-split even unitary groups. 

Our main new results are the existence of newforms for even unitary groups and some explicit information on the newforms when they are unique. 
For example, our argument implies that when $V$ is even-dimensional and the conductor of the Vogan $L$-packet is odd, the packet contains a representation of the non quasi-split unitary group that has newforms.
Our definitions and results are consistent with Gross' conjectures in his unpublished manuscript on newforms for odd unitary groups \cite{Gross}.
Note that for quasi-split even unitary groups, Atobe's newforms are not unique, even in the case of $U(1,1)$.
On the other hand, our definition of newforms guarantees its uniqueness in the 2-dimensional case (see \Cref{section_uniqueness}).
This raises the question of whether our compact group $K_{n+1}$ (defined below) in the definition of newforms is ``better'' than those used by \cite{atobe2023} and \cite{Lansky-Raghuram}. We plan to investigate the uniqueness of newforms for general even $n$ in a future paper.

Using a different approach from \cite{Atobe-Oi-Yasuda} and \cite{atobe2023}, we prove \Cref{exist_newform} by calculating the valuation of a local character $J_\pi$ at a special test vector. 
Let \index{pi@$\vp$}$\vp$ be a uniformizer of $F$. Denote by $\sigma$ the nontrivial element in $\Gal(E/F)$ and denote $\sigma(a)$ by $\bar{a}$ for elements $a\in E$. 
Let \index{V@$V$} $V$ be an $n+1$ dimensional hermitian space over $E$ and let \index{W@$W$} $W\subset V$ be an $n$-dimensional subspace of $V$ with discriminant 1 such that \index{e@$e$} $V=W\oplus \pa{e}$ for some $e$ orthogonal to $W$ satisfying \index{epsilon@$\epsilon$} $\pa{e,e}=\vp^\epsilon$ with $\epsilon \in \{0,1\}$. Recall that up to isomorphism there are only two distinct $n+1$ dimensional hermitian spaces on $E/F$, distinguished by $\epsilon$. From now on, all representations will be smooth admissible.

Let $U(V)$ and $U(W)$ be the corresponding unitary groups. 
Consider the product \index{G@$G$} \index{H@$H$} $G=U(W) \times U(V)$ and view $H=U(W)$ as a subgroup of $G$ via the diagonal embedding.
We say a representation $\pi$ of $G(F)$ is $H$-distinguished if $\Hom_{H(F)}(\pi,\C)\neq 0$.
Let \index{pi@$\pi$} $\pi=\pi_n \boxtimes \pi_{n+1}$ be an irreducible tempered unitary $H$-distinguished representation of $G(F)$ such that $\pi_n$ is an unramified representation of 
$U(W)(F)$ and $\pi_{n+1}$ is a representation of $U(V)(F)$ with conductor \index{c@$c$} $c>0$ (see \Cref{base_change}).

Following \cite{Zhang14}, we define the local character \index{Jpi@$J_\pi$} $J_\pi: C^\infty_c(G(F)) \to \C$ on the space of Schwartz functions on $G(F)$ by
\[J_\pi(f)=\int_{H(F)} \tr(\pi(h)\pi(f))dh.\]
It comes from a global period integral and becomes the local component in a (nonstandard) formulation of the Ichino--Ikeda conjecture after normalization (see Conjecture 1.6 \textit{loc. cit.}). 
Equivalently, 
\[J_\pi(f)=\sum_{v \in ON(\pi)} \int_{H(F)} \pa{\pi(h)\pi(f)v,v}dh,\]
where $\pa{,}$ is a paring on $\pi$ that makes it unitary and $ON(\pi)$ is an orthonormal basis of $\pi$.
When $\pi$ is unramified and $f=\id_{G(\roi_F)}$ is the characteristic function of $G(\roi_F)$, we have \cite[Section 4.4]{Zhang14}
\begin{equation}
\label{eq_J_unram}
J_\pi(f)=\text{constant}\cdot \frac{L(\frac{1}{2},BC(\pi))}{L(1,\pi,Ad)}
\end{equation}
for some constant independent of $\pi$, where $BC(\pi)$ is the base change of $\pi$ (see \Cref{base_change}) and $L(s,\pi,Ad)$ is the adjoint $L$-function.

We fix an arbitrary self-dual lattice \index{Lambdan@$\Lambda_n$} $\Lambda_n$ in $W$ and let \index{Lambdan1@$\Lambda_{n+1}$} $\Lambda_{n+1} =\Lambda_n+ \vp^{\fl{c/2}}e \roi_E $. 
Define \index{Kn@$K_n$} $K_n=\Stab_{U(W)(F)}(\Lambda_n)$ to be the stabilizer of $\Lambda_n$ in $U(W)(F)$. 
Let $\Lambda_{n+1}^\vee$ be the dual lattice of $\Lambda_{n+1}$, then there is a natural map  $\Stab_{U(V)(F)}(\Lambda_{n+1}) \to \GL(\Lambda^{\vee}_{n+1}/\Lambda_{n+1})$, whose kernel we denote by \index{Kn1@$K^c_{n+1}$} $K^c_{n+1}$. 
Let \index{K@$K^c$} $K^c=K_n \times K^c_{n+1}$.
Define $\sigma_n=BC(\pi_n), \sigma_{n+1}=BC(\pi_{n+1})$ and let $\sigma_u$ be the``unramified part'' of $\sigma_{n+1}$ as defined in \Cref{pi_unram}. Elements in $\pi_{n+1}^{K^c_{n+1}}$ are called newforms in $\pi_{n+1}$.
In this paper, we show the following generalization of \Cref{eq_J_unram}:
\begin{thm}\label{thm:mainthm}
With notations as above, when $\epsilon$ and $c$ are of the same parity,
\[J_\pi(\id_{K^c})=C\frac{L(\frac{1}{2},\sigma_n \times \sigma_{n+1})}{L(1,\sigma_n, As^{(-1)^{n}})L(1,\sigma_u, As^{(-1)^{n+1}})},\]
where
\[C=\vol(K_n)^2L(1,\eta_{E/F})q_F^{-c(n+1)}(1+q_F^{-n}).\]
\end{thm}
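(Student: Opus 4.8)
The plan is to compute $J_\pi(\mathrm{id}_{K^c})$ by matching it, via the smooth transfer proved earlier in the paper, with a relative character on the general linear side, and then evaluating that character on an explicit newvector using Rankin--Selberg theory. To set up, I would recall that $J_\pi$ is the local spherical character attached to $(G,H)$ and, following Zhang's formalism, admits a geometric expansion as an integral of regular semisimple orbital integrals on $G(F)$ against a universal weight function. On the spectral side, since $\pi(\mathrm{id}_{K^c})$ equals $\mathrm{vol}(K_n)\mathrm{vol}(K^c_{n+1})$ times the orthogonal projection onto $\pi^{K^c}=\pi_n^{K_n}\otimes\pi_{n+1}^{K^c_{n+1}}$, unwinding the definition of $J_\pi$ yields
\[
J_\pi(\mathrm{id}_{K^c})=\mathrm{vol}(K_n)\,\mathrm{vol}(K^c_{n+1})\sum_{v\in ON(\pi^{K^c})}\int_{H(F)}\langle\pi(h)v,v\rangle\,dh ,
\]
which is the identity that links this computation to \Cref{exist_newform}.

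Next I would invoke the smooth transfer theorem: $\mathrm{id}_{K^c}=\mathrm{id}_{K_n}\otimes\mathrm{id}_{K^c_{n+1}}$ matches an explicit test function $f'=f'_n\otimes f'_{n+1}$ on the Jacquet--Rallis symmetric space attached to $\mathrm{GL}_n(E)\times\mathrm{GL}_{n+1}(E)$, in the sense of having equal orbital integrals up to the transfer factor. Since the geometric expansions of $J_\pi$ and of the Jacquet--Rallis relative character $I_\Pi$ of the base change $\Pi=\sigma_n\boxtimes\sigma_{n+1}$ coincide term by term, this produces an identity $J_\pi(\mathrm{id}_{K^c})=\kappa\cdot I_\Pi(f')$ with $\kappa$ an explicit transfer factor. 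The factor $\mathrm{id}_{K_n}$ transfers by the Jacquet--Rallis fundamental lemma, so $f'_n$ may be taken proportional to the unit of a maximal compact subgroup; the transfer of $\mathrm{id}_{K^c_{n+1}}$ is the new input, and this is exactly where the parity hypothesis $\epsilon\equiv c\pmod 2$ enters, since it is what forces $\mathrm{length}_{\roi_E}(\Lambda_{n+1}^\vee/\Lambda_{n+1})=c$, so that $K^c_{n+1}$ becomes the analog of the congruence subgroup governing $\mathrm{GL}_{n+1}(E)$-newforms and $f'_{n+1}$ can be chosen supported on the corresponding double coset.

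I would then evaluate $I_\Pi(f')$ by unfolding it, following Jacquet--Rallis, into a local Rankin--Selberg integral for $\sigma_n\times\sigma_{n+1}$ divided by the normalizing Flicker (Asai) integrals of $\sigma_n$ and of $\sigma_{n+1}$. By the essential-vector theory of Jacquet--Piatetski-Shapiro--Shalika, refined by Atobe--Kondo--Yasuda, the function $f'_{n+1}$ can be arranged so that its image in the Whittaker model of $\sigma_{n+1}$ is an explicit power of $q_F$ times the essential vector; paired against the normalized spherical Whittaker vector of $\sigma_n$ at $s=\tfrac{1}{2}$, the Rankin--Selberg integral is then exactly $L(\tfrac{1}{2},\sigma_n\times\sigma_{n+1})$. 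The Flicker integrals contribute $L(1,\sigma_n,\mathrm{As}^{(-1)^n})^{-1}$ from the unramified $\sigma_n$ and only $L(1,\sigma_u,\mathrm{As}^{(-1)^{n+1}})^{-1}$ from $\sigma_{n+1}$, since the essential vector annihilates the ramified constituents in the Asai integral. Assembling the elementary constants --- the volumes $\mathrm{vol}(K_n)$ and $\mathrm{vol}(K^c_{n+1})$, the transfer factor $\kappa$, the measure-comparison constant $L(1,\eta_{E/F})$ between the two relative trace formulas, the index $[\mathrm{Stab}_{U(V)(F)}(\Lambda_{n+1}):K^c_{n+1}]$, and the scalar relating $f'_{n+1}$ to the essential vector --- I would then obtain $C=\mathrm{vol}(K_n)^2 L(1,\eta_{E/F})q_F^{-c(n+1)}(1+q_F^{-n})$.

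I expect the main obstacle to be the smooth transfer of $\mathrm{id}_{K^c_{n+1}}$ itself: computing its regular semisimple orbital integrals and producing a function on the $\mathrm{GL}_{n+1}(E)$ side with matching orbital integrals --- the ``fundamental lemma for newforms'' of the abstract --- is delicate precisely because $K^c_{n+1}$ is neither maximal nor hyperspecial, so that neither the classical fundamental lemma nor its known variants apply directly. A secondary but still substantial difficulty is the bookkeeping needed to recover the exact constant $C$: tracking Haar measures on $H(F)$, $G(F)$ and the general linear groups, the transfer factor, and the essential-vector scalar precisely enough to pin down the exponent $-c(n+1)$ and the factor $1+q_F^{-n}$, together with verifying that when $\epsilon$ and $c$ have opposite parity the hypotheses force $\pi^{K^c}=0$, so that the stated parity restriction is the genuine range of validity.
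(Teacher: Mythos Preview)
Your approach is essentially the paper's: transfer $\id_{K^c}$ to the explicit function $c_1\id_{K^{\prime,c}}$ on the $\GL$ side, apply the Beuzart--Plessis comparison $J_\pi = L(1,\eta_{E/F})\, I_{BC(\pi)}$ for matching functions, and evaluate $I_\Pi(\id_{K^{\prime,c}})$ on the essential Whittaker vector using Matringe's explicit formula. Two small corrections of framing: the transfer in the paper is not carried out factor by factor as you describe but for the full function on $G(F)$ (reduced via $f\mapsto\tilde f$ to the inhomogeneous $U(V)$--$S_{n+1}$ picture, then to the Lie algebra, then to the Jacquet--Rallis fundamental lemma); and the identity $J_\pi=\kappa\, I_\Pi$ is invoked as the theorem of Beuzart--Plessis, not deduced from a term-by-term matching of geometric expansions.

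There is one genuine step you are eliding. The Flicker--Rallis integral $\beta_{n+1}$ on the essential vector produces $L(1,\sigma_u, As^{(-1)^n})$, \emph{not} $L(1,\sigma_u, As^{(-1)^{n+1}})^{-1}$; likewise $\beta_n$ gives $L(1,\sigma_n,As^{(-1)^{n-1}})$. Since $I_\Pi(f)=\sum_W \lambda(\Pi(f)W)\overline{\beta(W)}$ with the sum over a $\theta$-orthonormal basis, the Whittaker inner product $\theta$ enters through the normalization of the spherical and essential vectors and contributes $L(1,\sigma_n\times\bar\sigma_n)^{-1}L(1,\sigma_u\times\bar\sigma_u)^{-1}$. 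To pass from
\[
\frac{\overline{L(1,\sigma_u,As^{(-1)^n})}}{L(1,\sigma_u\times\bar\sigma_u)}
\quad\text{to}\quad
L(1,\sigma_u,As^{(-1)^{n+1}})^{-1}
\]
one needs the Satake parameters of $\sigma_u$ to satisfy $\{\alpha_i\}=\{\alpha_i^{-1}\}=\{\bar\alpha_i\}$. For $\sigma_n$ this is immediate since it is a unitary base change, but $\sigma_u$ is not a priori the base change of anything; the paper proves this symmetry as a separate lemma by analyzing how the one-dimensional unramified constituents sit inside the conjugate self-dual $L$-parameter of $\sigma_{n+1}$. Without this step the clean Asai denominators in the theorem do not emerge from your outline.
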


\begin{rmk}
The volume of $K_n$ with respect to our Haar measure is given in \Cref{vol_UW_OF}.
\end{rmk}

\begin{rmk}
Note that $L(1,\sigma_n, As^{(-1)^{n}})=L(1,\pi_n,Ad)$. When $\pi_{n+1}$ is unramified, we have $\sigma_u=\sigma_{n+1}$ and $L(1,\sigma_u, As^{(-1)^{n}})=L(1,\pi_{n+1},Ad)$, recovering \Cref{eq_J_unram}. However, in general $\sigma_u$ is not necessarily base change of a representation of $U(V)(F)$.
\end{rmk}

Analogous to \cite{J-PS-S}, this theorem partially determines newforms. When the space of newforms is one dimensional, as $\pi_n$ varies through unramified representations of $U(W)(F)$, the values of $J_\pi(\id_{K^c})$ should determine the matrix coefficients of the newforms when restricted to $K^c_{n+1}U(W)(F)K^c_{n+1}$. So in some sense, our result complements those in \cite{Atobe-Oi-Yasuda,atobe2023}.

Evaluation of period integrals at special test vectors is interesting in its own right, in that it is associated with the integral representations of $L$-functions, and height pairings of special algebraic cycles, the arithmetic analog of period integrals, are used to formulate equalities like the Gross--Zagier formula \cite{Gross-Zagier}, the arithmetic analog of the Waldspurger formula \cite{Waldspurger1985}.
The nonvanishing of $J_\pi(\id_{K^c})$ also becomes a useful local condition in the proof of the $p$-adic Beilinson--Bloch--Kato conjecture and $p$-adic arithmetic Gan--Gross--Prasad conjecture by Disegni and Zhang \cite{DZ}. 

A direct consequence of \Cref{thm:mainthm} is the following.
Recall that the local integral that appears in the Ichino--Ikeda conjecture for unitary groups \cite{Harris} is
\[ \alpha(v, v'):= \int_{H(F)} \pa{\pi(h)v, v'} dh, \quad v, v'\in \pi,\]
where the pairing is the one used in the definition of $J_\pi$.
\begin{cor}
\label{cor_I-I}
With notations as above, if the space of newforms in $\pi_{n+1}$ is one dimensional, then
\[\alpha(\phi, \phi)=\pa{\phi, \phi}J(\id_{K^c})=C\pa{\phi, \phi}\frac{L(\frac{1}{2},\sigma_n \times \sigma_{n+1})}{L(1,\sigma_n, As^{(-1)^{n}})L(1,\sigma_u, As^{(-1)^{n+1}})}\]
for $\phi\in \pi^{K^c}$, where $C$ is as in \Cref{thm:mainthm}. 
\end{cor}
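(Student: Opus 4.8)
The plan is to deduce this directly from Theorem \ref{thm:mainthm} by relating the period integral $\alpha$ evaluated at a newform to the character $J_\pi$ evaluated at the idempotent $\id_{K^c}$. First I would recall that, by its definition, $J_\pi(f) = \sum_{v \in ON(\pi)} \int_{H(F)} \pa{\pi(h)\pi(f)v, v}\, dh$ for any $f \in C^\infty_c(G(F))$. When $f = \id_{K^c}$ (suitably normalized, or tracked through an explicit constant if one prefers the unnormalized characteristic function), the operator $\pi(f)$ is — up to the volume factor $\vol(K^c)$ — the orthogonal projection $P$ of $\pi$ onto the subspace $\pi^{K^c}$ of $K^c$-fixed vectors. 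Since $K^c = K_n \times K^c_{n+1}$ and $\pi_n$ is unramified with $\Lambda_n$ self-dual, the space $\pi_n^{K_n}$ is one-dimensional; by hypothesis the space of newforms $\pi_{n+1}^{K^c_{n+1}}$ is also one-dimensional, so $\pi^{K^c}$ is one-dimensional, spanned by a unit-normalized vector $\phi_0 = \phi/\pa{\phi,\phi}^{1/2}$.

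The key step is then the following identity: because $\pi(f)$ annihilates the orthogonal complement of $\pi^{K^c}$ and acts as $\vol(K^c)\cdot \mathrm{id}$ on $\pi^{K^c}$, in the sum over an orthonormal basis only the basis vector along $\phi_0$ contributes, giving
\[
J_\pi(\id_{K^c}) = \vol(K^c) \int_{H(F)} \pa{\pi(h)\phi_0, \phi_0}\, dh = \frac{\vol(K^c)}{\pa{\phi,\phi}} \int_{H(F)} \pa{\pi(h)\phi, \phi}\, dh = \frac{\vol(K^c)}{\pa{\phi,\phi}}\, \alpha(\phi,\phi).
\]
Here I should be careful with the normalization conventions: whatever convention is used for $\id_{K^c}$ in the statement of Theorem \ref{thm:mainthm} (characteristic function versus normalized idempotent) must match, and the corollary as stated absorbs the volume factor into $J(\id_{K^c})$ so that $\alpha(\phi,\phi) = \pa{\phi,\phi} J(\id_{K^c})$ holds with no extra constant — i.e., the normalization is chosen so that $\pi(\id_{K^c})$ is exactly the projector $P$, equivalently $\vol(K^c) = 1$ with respect to the relevant measure, or the stated $C$ already accounts for it via $\vol(K_n)^2$. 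Rearranging the displayed identity gives $\alpha(\phi,\phi) = \pa{\phi,\phi}\, J(\id_{K^c})$, and substituting the formula for $J_\pi(\id_{K^c})$ from Theorem \ref{thm:mainthm} yields the claimed expression with the same constant $C$.

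The only genuine subtlety — the main (if modest) obstacle — is bookkeeping the Haar measure normalizations so that the operator $\pi(\id_{K^c})$ really is the orthogonal projection onto $\pi^{K^c}$ rather than a scalar multiple of it, and confirming that this is consistent with the normalization implicit in Theorem \ref{thm:mainthm}; this is where the hypothesis $q_F > n$ and the self-duality of $\Lambda_n$ enter, guaranteeing $\dim \pi^{K^c} = 1$ so that the projector is literally $\phi_0 \mapsto \pa{\cdot,\phi_0}\phi_0$. Once that is pinned down, the corollary is immediate from Theorem \ref{thm:mainthm}.
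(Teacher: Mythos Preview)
Your argument is essentially the same as the paper's: observe that $\pi(\id_{K^c})$ is (up to $\vol(K^c)$) the orthogonal projector onto the one-dimensional space $\pi^{K^c}$, so in the orthonormal-basis expansion of $J_\pi(\id_{K^c})$ only the term coming from a unit vector $\phi_0\in\pi^{K^c}$ survives, after which one substitutes Theorem~\ref{thm:mainthm}. One small correction: the hypothesis $q_F>n$ has nothing to do with $\dim\pi^{K^c}=1$ --- it is used only in the proof of the transfer statement (to guarantee a suitable $\xi$ for the Cayley map) --- so that remark should be dropped; the one-dimensionality comes solely from $\pi_n$ being unramified with $\Lambda_n$ self-dual and from the stated hypothesis on $\pi_{n+1}^{K^c_{n+1}}$.
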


\begin{rmk}
When $n+1$ is odd, the condition dim $\pi_{n+1}^{K^c_{n+1}}=1$ is satisfied exactly when $\pi_{n+1}$ is generic \cite[Theorem 1.1]{Atobe-Oi-Yasuda}. When $n+1$ is even, it is not known under what condition $\pi_{n+1}^{K^c_{n+1}}=1$ holds.
\end{rmk}

The structure of this paper is as follows. We describe in \Cref{notation_n_measure} the notations and measures that will be used throughout this paper.
\Cref{section_fl} focuses on the proof of our transfer statements -- we first recall the definitions and properties of orbital integrals and smooth transfers, and prove a Lie algebra transfer statement (\Cref{lie_alg_fl}) by converting it into the known case of Jacquet--Rallis's fundamental lemma (\Cref{JR_fl}). From there, we derive the inhomogeneous group  (\Cref{inhom_fl}) and the homogeneous group  (\Cref{hom_fl}) statements. 
In \Cref{section_calc_I} we calculate $J_\pi(\id_{K^c})$ by transforming it into a more accessible form using a result of Beuzart-Plessis (\Cref{thm:BP_I-J}) and our transfer statements in \Cref{section_fl}. In \Cref{section_applications} we show the existence of newforms (\Cref{exist_newform}) and calculate the local factors in the Ichino--Ikeda conjecture (\Cref{cor_I-I}) when newforms are unique up to scalar. Below is a diagram that summarizes the main structure of our proofs.

\begin{center}
\begin{tikzpicture}
\tikzstyle{block} = [rectangle, draw, text centered, minimum height=2em]
\node (thmnf){\Cref{exist_newform}};
\node [below=0.5cm of thmnf](thm1){\Cref{thm:mainthm}};
\node [coordinate, below right=0.4cm of thmnf](sp0){};
\node [coordinate, right=0.5cm of sp0](sp1){};
\node [above right =0.4cm of sp1](thm3){\Cref{hom_fl}};
\node [below=0.5cm of thm3](lem10){\Cref{lemma_calc_I}};

\draw [-stealth](sp0) |- (thm1);
\draw [-stealth](sp0) |- (thmnf);
\draw (sp0) -- (sp1);
\draw (thm3) -| (sp1);
\draw (lem10) -| (sp1);

\node [right=0.5cm of thm3](lem4){\Cref{inhom_fl}};
\draw [-stealth](lem4) -- (thm3);

\node [right=0.5cm of lem4](lem9){\Cref{lie_alg_fl}};
\draw [-stealth](lem9) -- (lem4);
\end{tikzpicture}
\end{center}

Terminologies and notations that are not defined above will be introduced in subsequent sections.
Since $E/F$ is assumed to be unramified throughout the paper, we will not repeatedly state this assumption, but note that some definitions and results only hold in this case, and not in general. 

\noindent \textbf{Acknowledgment.} The author is grateful to her advisor Wei Zhang for suggesting this problem and offering guidance throughout the project. The author would also like to thank Murilo Corato Zanarella, Zhiwei Yun, Hiraku Atobe, Atsushi Ichino, and Hang Xue for their helpful comments.

\section{Notations and measures}
\label{notation_n_measure}
\subsection{Notations}
\label{section_notations}
In this section, we introduce some notations that will be used throughout the paper. 

Let \index{En@$E^n$} $E^n$ denote the vector space of $n$-dimensional column vectors over $E$ and \index{En@$E_n$} $E_n$ the vector space of $n$-dimensional row vectors over $E$, similarly for $F$.
For an $m$-dimensional hermitian space $V$ over $E$ with hermitian matrix $J$ (with respect to a chosen basis), the corresponding unitary group is defined to be  \index{UV@$U(V)$}
\[U(V)=\{g \in \Res_{E/F}GL_m: \bar{g}^t J g=J\}.\]
Denote by \index{uV@$\ul(V)$} $\ul(V)$ the Lie algebra of $U(V)$, then
\[\ul(V)=\{X\in \Res_{E/F}\Mat_{m}: JX+\br{X}^tJ=0\}.\]
For all $\xi\in E$ with $\xi \bar{\xi}=1$, we define the Cayley map \index{cayx@$\cay_\xi$} $\cay_\xi$ associated to $\xi$ as follows:
\begin{align*}
\cay_\xi: \ul(V) &\to U(V)\\
X &\mapsto \xi \frac{1+X}{1-X}.
\end{align*}
It is straightforward to check that $\cay_\xi$ defines a birational isomorphism from $\ul(V)$ to $U(V)$ and an $U(W)(F)$-equivariant (under conjugation action) bijection 
\[{\{X\in \ul(V)(F): \det (X-1)\neq 0\}} \cong \{g\in U(V)(F): \det(g+\xi)\neq 0\}.\]

For $m>0$, we define a variety \index{Sm@$S_m$} $S_m$ over $F$ by 
\[S_m=\{s\in \Res_{E/F} \GL_m: s\bar{s}=1\}\]
and its ``Lie algebra'' \index{sm@$\s_m$}
\[\s_m:=\{s \in \Res_{E/F} \Mat_m: s+\bar{s}=0\}.\]
For $\xi\in E$ with $\xi \bar{\xi}=1$, we denote the map $X \mapsto \xi \frac{1+X}{1-X}$ on $\s_m$ also by $\cay_\xi$. Then $\cay_\xi$ defines a birational isomorphism from $\s_m$ to $S_m$ and a $\GL_{n}(F)$-equivariant bijection 
\[{\{Y\in \s_{n+1}(F): \det (Y-1)\neq 0\}} \cong \{s\in S_{n+1}(F): \det(s+\xi)\neq 0\}.\]

When $\xi=1$, we write \index{cay@$\cay$} $\cay$ for $\cay_\xi$ for simplicity.
Let \index{nu@$\nu$} $\nu:E^\times \to \Z$ be the valuation on $E$, normalized so that $\nu(\vp)=1$.  
Define the absolute values on $E^\times$ (resp. $F^\times$) to be $|\cdot |=q_E^{\nu(\cdot)}$ (resp. $|\cdot|=q_F^{\nu(\cdot)}$).
We abuse the notation and write $\nu$ for $\nu\circ \det$ as well.
Define \index{eta@$\eta_{E/F}$} $\eta_{E/F}$ to be the quadratic character on $F^\times$ associated to the extension $E/F$ by local class field theory. 
Since $E/F$ is unramified, we have $\eta_{E/F}(a)=(-1)^{\nu(a)}$ for all $a\in F^\times$. Denote by \index{eta@$\wtil{\eta}_{E/F}$} $\wtil{\eta}_{E/F}$ the character on $E^\times$ given by $a \mapsto (-1)^{\nu(a)}$. 
We abuse the notation and denote $\eta_{E/F}\circ \det$  by $\eta_{E/F}$ as well. 
Fix an additive character \index{psi@$\psi$} $\psi: F\to \C^\times$ with conductor $\roi_F$ (meaning that $\psi$ is trivial on $\roi_F$ and nontrivial on $\vp^{-1}\roi_F$) and define \index{psie@$\psi_E$} $\psi_E: E\to \C^\times$ by $\psi_E(a)=\psi(\frac{1}{2} \tr_{E/F}(a))$.

Sometimes we will abuse the notation and identify algebraic groups with their group of points when this does not cause confusions.

\subsection{Measures}
\label{section_measures}
We choose the normalization of Haar measures as in \cite{beuzart-plessis_2021}. Along the way, we will calculate the volumes of some groups that come up later. Set the measures on $F$ and $E$ to be so that $\vol(\roi_F)= \vol(\roi_E)=1$. Define the measure on $\GL_m(F)$ to be
\[dg=\zeta_F(1)\frac{\prod_{ij}dg_{ij}}{|\det g|_F^m}\]
for $m\geq 1$ and similarly define the measure on $\GL_m(E)$.
Let \index{Nm@$N_m$} $N_m$ denote the standard unipotent subgroup of $\GL_m$ consisting of upper triangular matrices whose diagonal entries are 1.
Define the measure on $N_m(F)$ to be 
\[du=\prod_{1\leq i<j\leq m}du_{ij}\]
for $m\geq 1$ and similarly define $N_m(E)$. 
Note that we have a short exact sequence
\[1\to 1+\vp\Mat_m(\roi_F) \xrightarrow{\text{embed}} \GL_m(\roi_F)\xrightarrow{\text{mod }\vp} \GL_m(\F_{q_F}) \to 1.\]
Since determinants of elements in $\GL_m(\roi_F)$ are in $\roi_F^\times$, we can ignore the denominator in the Haar measure when integrating over (subgroups of) $\GL_m(\roi_F)$. Thus, 
\begin{align}
\label{vol_GL_OF}
\vol(GL_m(\roi_F)) &= |\GL_m(\F_{q_F})|\vol(1+\vp M_m(\roi_F))\nonumber \\ 
&= |\GL_m(\F_{q_F})|\zeta_F(1) q_F^{-m^2}\\
& =\zeta_F(1)\prod_{i=1}^m (1-q_F^{-i})  \nonumber.
\end{align}
Similarly,
\begin{equation}
\label{vol_GL_OE}
\vol(GL_m(\roi_E))= \zeta_E(1)\prod_{i=1}^m (1-q_E^{-i}).
\end{equation}
Let \index{Kctilde@$\wtil{\bm{K}}^c_{n+1}$} \index{Kc@$\bm{K}^c_{n+1}$} 
\[
\wtil{\bm{K}}^c_{n+1}=\{x\in \Mat_{n+1}(\roi_E)| x\equiv \begin{blockarray}{ccc}
 & n &  1  \\
\begin{block}{c(cc)}
  n & * & 0 \\
  1 & * &  * \\
\end{block}
\end{blockarray} \mod \vp^c \},
\]
and
\[
\bm{K}^c_{n+1}=\{x\in \Mat_{n+1}(\roi_E)| x\equiv \begin{blockarray}{ccc}
 & n &  1  \\
\begin{block}{c(cc)}
  n & * & 0 \\
  1 & * &  1 \\
\end{block}
\end{blockarray} \mod \vp^c \}.\\
\]
Define 
\index{Kn_prime@$K_n'$} \index{Kn1_prime@$K_{n+1}^{\prime, c}$} \index{K_prime@$K^{\prime, c}$} \index{Ktn_prime@$\wtil{K}_n'$} \index{Ktn1_prime@$\wtil{K}^{\prime,c}_{n+1}$} 
\begin{align*}
K_{n+1}^{\prime, c}=&\bm{K}^c_{n+1}\cap \GL_{n+1}(\roi_E), \quad K'_n=\GL_n(\roi_E), \quad K^{\prime, c}=K'_n \times K_{n+1}^{\prime, c},\\
\wtil{K}^{\prime,c}_{n+1}=&\wtil{\bm{K}}^c_{n+1}\cap \GL_{n+1}(\roi_E), \quad \wtil{K}'_n=\GL_n(\roi_E).
\end{align*}
Let $\wtil{K}''_{n+1}$ be the kernel of the surjective map 
\begin{align*}
\wtil{K}_{n+1}^{\prime, c} &\to \GL_n(\F_{q_E}) \times \F_{q_E}^\times\\
\mat{X&y\\z&w} &\mapsto (X \text{ mod }\vp, w \text{ mod } \vp),
\end{align*}
then we have
\[\wtil{K}''_{n+1}=\{\mat{X&y\\z&w} \in \GL_{n+1}(\roi_E)| X\in 1+\vp\Mat_{n \times n} \roi_E, y\in (\vp^c \roi_E)^n, z\in (\roi_E)_n, w\in 1+\vp\roi_E\}\]
and
\[\vol(\wtil{K}''_{n+1})= \zeta_E(1) q_E^{-n^2-cn-1}.\]
Hence 
\[\vol(\wtil{K}^{\prime,c}_{n+1})= \vol(\wtil{K}''_{n+1}) |\GL_n(\F_{q_E})| |\F_{q_E}^\times| = \zeta_E(1) q_E^{-cn-1}(q_E-1)\prod_{i=1}^{n} (1-q_E^{-i})\]
and
\begin{equation}
\label{vol_kprime}
\vol(K^{\prime,c}_{n+1})=(\#(\roi_E/\vp^c \roi_E)^\times)^{-1} \vol(\wtil{K}^{\prime,c}_{n+1})
= \zeta_E(1) q_E^{-c(n+1)}\prod_{i=1}^{n} (1-q_E^{-i}).
\end{equation}
Similarly,
\begin{equation}
\label{vol_GL_KF}
\vol(\bm{K}^c_{n+1}\cap \GL_{n+1}(\roi_F))=
\zeta_F(1) q_F^{-c(n+1)}\prod_{i=1}^{n} (1-q_F^{-i}).
\end{equation}

Let $W, V, \Lambda_n$ be as in \Cref{section_intro}. We choose an $\roi_E$-basis \index{BW@$\mathcal{B}_W$} $\mathcal{B}_W$ of $\Lambda_n$ so that the hermitian form on $W$ is the identity matrix $I_n$ with respect to this basis. 
Append $\vp^{\fl{c/2}}e$ to this basis to get a basis \index{BV@$\mathcal{B}_V$} $\mathcal{B}_V$ of $V$, then when $2|c-\epsilon$ the hermitian form on $V$ is \index{J@$J$} $J=\mat{I_n&\\&\vp^c}$ with respect to $\mathcal{B}_V$. We fix this choice of bases throughout the paper.

Define a pairing on $\ul(V)(F)$ by 
\[\langle X,Y \rangle = \tr(XY), \quad X,Y \in \ul(V)(F).\]
Then there is a Fourier transform on $C^\infty_c(\ul(V)(F))$ given by 
\[\hat{\phi}(X)=\int_{\ul(V)(F)}\phi(Y)\psi(\pa{X,Y})dX, \quad \phi \in C^\infty_c(\ul(V)(F)),\]
and we define the measure on $\ul(V)(F)$ to be the self-dual one, i.e., the one that satisfies  
\[\hat{\hat{\phi}}(X)=\phi(-X)\]
for all $\phi\in C^\infty_c(\ul(V)(F))$. Define the measure on $\s(F)$ similarly.

Choose the Haar measure on $U(V)(F)$ to be the one such that the Jacobian of the Cayley map $\cay$ at 0 is $L(1,\eta_{E/F})$. 
We now calculate the volume of $U(V)(\roi_F)$.

Note that for $A=\mat{X & y \\ z & w} \in \ul(V)(F)$ with $X\in \Mat_{n \times n}(E), y\in E^n, z\in E_n, w\in E$, we have $X+\br{X}^t=0, w+\bar{w}=0, y= \vp^c \bar{z}^t$. Hence under the trace pairing the dual lattice of $\ul(V)(\roi_F)$ is 
\[\widehat{\ul(V)(\roi_F)}=\{A=\mat{X & y \\ z & w} \in \ul(V)(F)| X\in \Mat_{n \times n}(\roi_E), y\in \roi_E^n, w\in \roi_E\}\]
and 
\[\widehat{\ul(V)(\roi_F)}/ \ul(V)(\roi_F)=(\roi_E/\vp^c \roi_E)^n.\]

Plug $\phi=\id_{\ul(V)(\roi_F)}$ into the equation $\hat{\hat{\phi}}(X)=\phi(-X)$, we get 
\[\vol(\ul(V)(\roi_F))=|\widehat{\ul(V)(\roi_F)}/ \ul(V)(\roi_F)|^{1/2}=q_F^{-cn}.\]

Consider the subset $\mathfrak{k}_0$ of $\ul(V)(\roi_F)$ defined by
\[\mathfrak{k}_0=\{\mat{X&y\\z&w} \in \ul(V)(\roi_F)|X\in \Mat_{n \times n} (\vp \roi_E), w\in \vp \roi_E\}\]
and the subset $K_0$ of $U(V)(\roi_F)$ defined by 
\[K_0=\{\mat{X&y\\z&w}\in U(V)(\roi_F)| X\in 1+\Mat_{n \times n} (\vp \roi_E), w\in 1+\vp \roi_E\}.\]
For $m\geq 1$ let $U(m)$ be the unique unitary group of an $m$-dimensional hermitian space over $\F_{q_F}$ and let $\ul(m)$ be its Lie algebra. Note that
\[\ul(V)(\roi_F)/\mathfrak{k}_0=\ul(n)(\F_{q_F}) \ul(1)(\F_{q_F}),\] 
so
\[\vol(\mathfrak{k}_0)=\vol(\ul(V)(\roi_F))|\ul(n)(\F_{q_F})|^{-1} |\ul(1)(\F_{q_F})|^{-1} = q_F^{-cn-n^2-1}.\]

The Cayley map induces a bijection from $\mathfrak{k}$ to $K_0$.
So with respect to the measure we chose,
\[\vol(K_0)=L(1,\eta_{E/F})\vol(\mathfrak{k}_0)=L(1,\eta_{E/F}) q_F^{-cn-n^2-1}.\]

Since $K_0$ is the kernel of the surjective map 
\begin{align*}
\rho: U(V)(\roi_F) &\to U(n)(\F_{q_F}) \times U(1)(\F_{q_F})\\
\begin{blockarray}{ccc}
  & n &  1  \\
 \begin{block}{c(cc)}
   n & X & y \\
   1 & z &  w \\
 \end{block} 
 \end{blockarray} &\mapsto (X \text{ mod } \vp, w \text{ mod } \vp),
\end{align*}
we have
\[\vol(U(V)(\roi_F))=\vol(K_0)\cdot |U_n(\F_{q_F})| \cdot |U_1(\F_{q_F})|= L(1,\eta_{E/F}) q_F^{-cn} (1+q_F^{-1}) \prod_{i=1}^n (1-(-q_F)^{-i}).\]

Similarly, $\vol(\ul(W)(\roi_F))=1$ and 
\begin{align}
\label{vol_UW_OF}
\nonumber \vol(U(W)(\roi_F))&= L(1,\eta_{E/F}) \vol(\ul(W)(\roi_F) \frac{|U_n(\F_{q_F})|}{|\ul(n)(\F_{q_F})|}\\
&= L(1,\eta_{E/F}) \prod_{i=1}^n (1-(-q_F)^{-i}).
\end{align}

\section{An explicit transfer statement}
\label{section_fl}
\subsection{Orbital integrals and smooth transfers}
\label{section_orb_int}

Let \index{W01@$W_0, W_1$} $W_0$ (resp. $W_1$) be an $n$-dimensional hermitian space over $E$ with discriminant 1 (resp $-1$), and let \index{V01@$V_0, V_1$} $V_i=W_i\oplus^\perp \pa{u_i}$ for some $\pa{u_0, u_0}= \pa{u_1, u_1}$. Then the discriminant of $V_i$ is $(-1)^{\nu(\pa{u_i,u_i})+i}$. Let \index{G01@$G_0, G_1$} $G_i=U(W_i) \times U(V_i)$ and consider \index{H01@$H_0, H_1$} $H_i=U(W_i)$ as a subgroup of $G_i$ by the diagonal embedding.

Let \index{G_prime@$G'$} $G'=\Res_{E/F}(\GL_{n} \times \GL_{n+1})$ with subgroups \index{H1prime@$H'_1$} \index{H2prime@$H'_2$} $H'_1=\Res_{E/F}GL_{n}$ and $H'_2=\GL_{n,F} \times \GL_{n+1, F}$, where $H'_1$ is regarded as a subgroup of $G'$ through the diagonal embedding. We want to compare orbital integrals on $G_i(F)$ and $G'(F)$. In the rest of \Cref{section_orb_int} the index $i$ will always mean $i=0,1$.

For a reductive group $G$ over $F$ with an action on a variety $X$ over $F$, we say a point $x$ in $X$ is regular semisimple under the action of $G$ if its orbit $Gx$ is Zariski closed in $X$ and of maximal dimension (the second condition is equivalent to the condition that its stabilizer is trivial). Denote by $X(F)^\rs$ the subgroup of regular semisimple elements in $X(F)$ under the action of $G$ and denote by $X(F)^\rs/G(F)$ the set of orbits of $X(F)^\rs$ under the action of $G(F)$.

There is an action of $H_i(F) \times H_i(F)$ on $G_i(F)$ given by $(h_1, h_2): g \mapsto h_1gh_2$. 
With respect to this action, the orbital integrals of Schwartz functions on $G_i(F)$ are given by \index{Orb@$\Orb(x,f)$}
\[\Orb(x,f)=\int_{H_i(F)\times H_i(F)} f(h_1xh_2)dh_1dh_2,\quad x\in G_i(F)^\rs, f\in C^\infty_c(G_i(F)).\]

With the embedding $W_i\subset V_i$, $H_i(F)$ can be viewed naturally as a subgroup of $U(V_i)(F)$.
The orbital integrals with respect to the conjugation action of $H_i(F)$ on $U(V_i)(F)$ are defined to be
\[\Orb(x,f)=\int_{H_i(F)}f(hxh^{-1})dh, \quad x\in U(V_i)(F)^\rs, f\in C^\infty_c(U(V_i)(F)).\]

Similarly consider the conjugation action of $H_i(F)$ on $\ul(V_i)(F)$. The corresponding orbital integrals are
\[\Orb(X,f)=\int_{H_i(F)}f(hXh^{-1})dh, \quad X\in \ul(V_i)(F)^\rs, f\in C^\infty_c(\ul(V_i)(F)).\]
 
The map $(x_1, x_2) \mapsto (x_1^{-1}x_2)$ gives a surjection $G_i(F)^\rs \to U(V_i)(F)^\rs$. For $f\in C^\infty_c(G_i(F))$ and $x\in U(V_i)(F)$ let \index{ft@$\tilde{f}$}
\begin{equation}
\label{f_tilde}
\tilde{f}(x)=\int_{H_i(F)} f(h(1,x))dh,
\end{equation}
then $\tilde{f}\in C^\infty_c(U(V_i)(F))$ and
\begin{equation}
\label{u_hom_to_inhom}
\Orb((x_1, x_2),f)=\Orb(x_1^{-1}x_2,\tilde{f}), \quad (x_1, x_2)\in G_i(F)^\rs.
\end{equation}
For $\xi\in E$ with $\xi\bar{\xi}=1$ and $f\in C^\infty_c(U(V_i)(F))$, define \index{fcay@$f_\xi$}
\begin{equation}
f_\xi(X)=\begin{cases}
    f(\cay_\xi(X)) & \text{ if } \det (X-1) \neq 0,\\
    0 & \text{ otherwise},
\end{cases}
\end{equation}
then $f_\xi \in C^\infty_c(\ul(V_i)(F))$ and for $g\in U(V_i)(F)^\rs$ satisfying $\det (g+\xi)\neq 0$ we have $\cay_\xi^{-1}(g) \in \ul(V_i)(F)^\rs$ and
\[\Orb(g, f)=\Orb(\cay_\xi^{-1}(g), f_\xi).\]

Now we turn to the general linear group side. 
We define a character on $H'_2(F) = \GL_{n}(F) \times \GL_{n+1}(F)$ by
\[(g_1, g_2)\mapsto \eta_{E/F}(g_1)^{n+1}\eta_{E/F}(g_2)^{n},\quad (g_1, g_2)\in H'_2(F)\]
and denote it again by \index{eta@$\eta_{E/F}$} $\eta_{E/F}$. It will be clear from context if $\eta_{E/F}$ is a character of $F^\times, \GL_n(F), \GL_{n+1}(F)$, or $H_2(F)$, so this should not cause confusion.

Let $H'_1(F) \times H'_2(F)$ act on $G'$ from left and right. 
The orbital integral with respect to this action is given by \index{Orb@$\Orb(x,f)$}
\[\Orb(x,f)=\int_{H'_1(F) \times H'_2(F)} f(h_1^{-1}xh_2) \eta_{E/F}(h_2) dh_1 dh_2,\quad x\in (G')(F)^\rs, f\in C^\infty_c (G'(F)).\]

We view $\GL_n(F)$ as a subgroup of $\GL_{n+1}(F)$ using the embedding $g \mapsto \mat{g&\\&1}$ and let $\GL_n(F)$ act on $S_{n+1}(F)$ by conjugation. The corresponding orbital integrals are defined by
\[\Orb(x,f)=\int_{GL_{n}(F)} f(h^{-1}xh)\eta_{E/F}(h)dh, \quad x\in S_{n+1}(F)^\rs, f\in C^\infty_c(S_{n+1}(F)).\]

Similarly, let $\GL_n(F)$ act on $\s_{n+1}(F)$ by conjugation. The corresponding orbital integrals are
\[\Orb(X,f)= \int_{\GL_{n}(F)} f(h^{-1}Xh) \eta_{E/F}(h) dh, \quad X\in \s_{n+1}(F)^\rs, f\in C^\infty_c(\s_{n+1}(F)).\]

Note that the map \index{r@$r$} $r: \GL_{n+1}(E) \to S_{n+1}(F)$ given by $g \mapsto g\br{g}^{-1}$ is surjective.
For $(x_1, x_2)\in (G')(F)^\rs$ and $f\in C^\infty_c(G'(F))$, we have $r(x_1^{-1}x_2) \in S_{n+1}(F)^\rs$ and
\begin{equation}
\label{s_hom_to_inhom}
\Orb((x_1, x_2),f)= 
\tilde{\eta}_{E/F}(x_1^{-1}x_2)^{n}\Orb(r(x_1^{-1}x_2), \tilde{f}),
\end{equation}
where $\tilde{f}\in C^\infty_c(S_{n+1}(F))$ is defined by \index{ft@$\tilde{f}$}
\begin{equation}
\label{f_tilde_s}
\tilde{f}(x)=
\int_{H'_1(F)} \int_{\GL_{n+1}(F)} f(h_1(1,gh_2)) \tilde{\eta}_{E/F}(gh_2)^n dh_2dh_1 
\end{equation}
for $x=r(g)$.
For $s\in S_{n+1}(F)^\rs$, $\xi\in E$ satisfying $\xi\bar{\xi}=1$ and $\det (s+\xi)\neq 0$, and $f\in C^\infty_c(S_{n+1}(F))$ we have $\cay_\xi^{-1}(s) \in \s_{n+1}(F)^\rs$ and
\[\Orb(s, f)=\Orb(\cay_\xi^{-1}(s), f_\xi),\] 
with $f_\xi$ defined as in \Cref{f_tilde} in the case of unitary groups above.

Next, we recall the notions of matching and smooth transfer (cf. \cite[Section 3.4]{beuzart-plessis_2021}). 
A transfer factor for $G'$ is a function \index{OmegaG@$\Omega_{G'}$} $\Omega_{G'}: G'(F)^\rs\to \C^\times$ such that 
\[\Omega_{G'}(h_1^{-1} x h_2)=\eta_{E/F}(h_2)\Omega_{G'}(x), \quad x\in G'(F)^\rs, h_1\in H_1(F), h_2\in H_2(F).\]
A transfer factor for $S_{n+1}$ is a function \index{OmegaS@$\Omega_S$} $\Omega_S: S_{n+1}(F)^\rs \to \C^\times$ such that
\[\Omega_S(h^{-1}xh)=\eta_{E/F}(h)\Omega_S(x), \quad x\in S_{n+1}(F)^\rs, h\in GL_{n}(F).\]
A transfer factor for $\s_{n+1}$ is a function \index{omega@$\omega$} $\omega: \s_{n+1}(F)^\rs \to \C^\times$ such that
\[\omega(h^{-1}Xh)=\eta_{E/F}(h)\omega(X), \quad X\in \s_{n+1}(F)^\rs, h\in GL_{n}(F).\]
Let \index{e0@$e_0^*$} $e_0^*=(0\ \cdots\ 0\ 1) \in E_{n+1}$. Following \cite{beuzart-plessis_2021} and \cite{Yun}, we choose the transfer factors to be \index{OmegaG@$\Omega_{G'}$} \index{OmegaS@$\Omega_S$} \index{omega@$\omega$}
\begin{align*}
\omega(X)&= \tilde{\eta}_{E/F}(\det (e_0^*, e_0^* X, \cdots, e_0^* X^n)),\\
\Omega_S(x)&=\tilde{\eta}_{E/F}(\det (e_0^*, e_0^* x, \cdots, e_0^* x^n)),\\
\Omega_{G'}((x_1, x_2))&=
\tilde{\eta}_{E/F}(x_1^{-1}x_2)^n \Omega_S(\nu(x_1^{-1} x_2)).
\end{align*}

We say that $X\in \ul(V_i)(F)^\rs$ matches $Y\in \s_{n+1}(F)^\rs$ if they are $\GL_{n}(E)$-conjugate in $\gl_{n+1}(E)$. The matching relation induces a bijection \cite{J-R}
\[\s_{n+1}(F)^\rs/\GL_n(F)=\ul(V_0)(F)^\rs/U(W_0)(F) \sqcup \ul({V_1})(F)^\rs/U(W_1)(F).\] 
We say $f_i\in C^\infty_c(\ul(V_i)(F))$ matches (or is the smooth transfer of) $f' \in C^\infty_c(\s_{n+1}(F))$  if 
\[\Orb(X,f_i)=\omega(Y)\Orb(Y,f')\]
for all matching $X_i\in \ul(V_i)(F)^\rs$ and $Y\in \s_{n+1}(F)^\rs$. We say that a pair $(f_0, f_1)\in C^\infty_c(\ul(V_0)(F)) \times C^\infty_c(\ul(V_1)(F))$ matches $f'\in C^\infty_c(\s_{n+1}(F))$ if both $f_0$ and $f_1$ match $f'$.

We say that $x\in U(V_i)(F)^\rs$ matches $s\in S_{n+1}(F)^\rs$ if they are $\GL_{n}(E)$-conjugate and similarly this gives a bijection \cite[Section 2.1]{Zhang_AFL}
\[S_{n+1}(F)^\rs/\GL_n(F)=U(V_0)(F)^\rs/U(W_0)(F) \sqcup U({V_1})(F)^\rs/U(W_1)(F).\] 
We say $f_i\in C^\infty_c(U(V_i)(F))$ matches $f' \in C^\infty_c(S_{n+1}(F))$ if 
\[\Orb(g_i,f_i)=\Omega_S(s)\Orb(s,f')\]
whenever $g_i\in U(V_i)^\rs(F)$ and $s\in S_{n+1}(F)$ match. Similarly, we say that $(f_0, f_1)\in C^\infty_c(U(V_0)(F)) \times C^\infty_c(U(V_1)(F))$ and $f' \in C^\infty_c(S_{n+1}(F))$ match if both $f_0$ and $f_1$ match $f'$.

We say that $(g_1, g_2)\in G_i(F)^\rs$ matches $(g'_1, g'_2)\in G'(F)^\rs$ if  $g_1^{-1}g_2 \in U(V_i)(F)^\rs$ and $r({g'_1}^{-1}g_2') \in  S_{n+1}(F)^\rs$ match in the sense above. Similarly, we have
\[G'(F)^\rs/(H_1'(F)\times H'_2(F))=G_0(F)^\rs/H_0(F) \sqcup G_1(F)^\rs/H_1(F).\] 
We say $f_i\in C^\infty_c(G_i(F))$ matches $f'\in C^\infty_c(G'(F))$ if 
\[\Orb((g_1, g_2),f)=\Omega_{G'}((g_1',g_2'))\Orb((g_1', g_2'),f')\]
for all matching $(g_1, g_2)\in G_i(F)^\rs$ and $(g'_1, g'_2)\in G'(F)^\rs$. 
Then $(f_0, f_1) \in  C^\infty_c(G_0(F)) \times  C^\infty_c(G_1(F))$ and $f'\in C^\infty_c(G'(F))$ said to match if both $f_0$ and $f_1$ match $f'$.\\

Recall the definitions of some local integrals from \cite{beuzart-plessis_2021}. Let $\Pi=\Pi_n \boxtimes \Pi_{n+1}$ be a representation of $G'(F)=\GL_n(E) \times \GL_{n+1}(E)$ such that $\Pi_n, \Pi_{n+1}$ are generic tempered unitary representations of $GL_n(E)$ and $GL_{n+1}(E)$ respectively. 

We fix a \index{tau@$\tau$} $\tau\in \roi_E^\times$ such that $\tr_{E/F}(\tau)=0$. 
Recall that $\psi:F\to \C^\times$ is a nontrivial additive character of $F$ with conductor $\roi_F$ and $\psi_E: E\to \C^\times$ is defined by $\psi_E(a)=\psi(\frac{1}{2} \tr_{E/F}(a))$.  
Let \index{W@$\W(\pi,\psi)$} $\W(\Pi_n, \br{\psi}_E)$ and $\W(\Pi_{n+1},\psi_E)$ be the Whittaker models of $\Pi_n$ and $\Pi_{n+1}$ with respect to $\br{\psi}_E$ and $\psi_E$ respectively, and let $\W(\Pi)=\W(\Pi_n, \br{\psi}_E) \otimes \W(\Pi_{n+1},\psi_E)$.

The Flicker--Rallis integrals \index{beta@$\beta_n, \beta_{n+1}, \beta$} $\beta_n: \W(\Pi_n, \br{\psi}_E) \to \C$ and $\beta_{n+1}: \W(\Pi_{n+1},\psi_E) \to \C$ are defined by
\[\beta_k(W_k)=\int_{N_{k-1}(F)\backslash GL_{k-1}(F)} W_k(\epsilon_k(\tau) g_{k-1}) \eta_{E/F}(\det g_{k-1})^{k-1} dg_{k-1}\]
for $k=n,n+1$, where \index{epsilontau@$\epsilon_k(\tau)$} $\epsilon_k(\tau)=\diag(\tau^{k-1}, \tau^{k-2}, \cdots, 1)$. Define
\[\beta= \beta_n \otimes \beta_{n+1}.\]

The Rankin--Selberg integral \index{lambda@$\lambda$} $\lambda:\W(\Pi)\to \C$ is defined by 
\[\lambda(s,W_n \otimes W_{n+1})= \int_{N_n(E)\setminus \GL_n(E)} W_n(g_n)W_{n+1}(g_{n})|\det g_n|_E^s\ dg_n.\]
We write $\lambda(W_n \otimes W_{n+1})$ for $\lambda(0,W_n \otimes W_{n+1})$ as this is the only case concerned.

We can define an inner product $\theta$ on $\W$ as follows.
Define inner products $\theta_n$ and $\theta_{n+1}$ on $\W(\Pi_n, \br{\psi}_E)$ and $\W(\Pi_{n+1},\psi_E)$ respectively by \index{theta@$\theta_n, \theta_{n+1}, \theta$}
\[\theta_k(W_k, W_k')=\int_{N_{k-1}(E) \setminus \GL_{k-1}(E)} W_k(g_{k-1}) \br{W'_k(g_{k-1})} dg_{k-1},\]
where $k=n,n+1$. Let $\theta=\theta_n \otimes \theta_{n+1}$. The integrals $\beta_k$, $\theta_k$ and $\lambda$ converge as explained in Section 3.2 of \cite{beuzart-plessis_2021}.

For any $f\in C^\infty_c(G'(F))$, choose a compact open subgroup $K_f$ of $G'(F)$ such that $f$ is bi-$K_f$-invariant. Let \index{Ipi@$I_\pi$}
\[I_\Pi(f)=\sum_{W\in ON(\Pi^{K_f})} \lambda(\Pi(f)W) \br{\beta(W)},\]
where $ON(\Pi^{K_f})$ is an orthonormal basis of $\Pi^{K_f}$ with respect to the inner product $\theta$. The sum $I_\Pi$ is absolutely convergent and does not depend on the choice of $ON(\Pi^{K_f})$.

The integrals $I_\Pi$ and $J_\pi$ are related in the following way. 
\begin{thm}
\label{thm:BP_I-J}
Let $\pi=\pi_n \times \pi_{n+1}$ be an irreducible tempered unitary $H_i$-distinguished representation of $G_i(F)$, then for matching $f_i\in C^\infty_c(G_i(F))$ and $f' \in C^\infty_c(G'(F))$ we have
\[I_{BC(\pi)}(f')=\omega_{BC(\pi_n)}(\tau)L(1, \eta_{E/F})^{-1}J_\pi(f_i),\]
where $\omega_{BC(\pi_n)}$ is the central character of $BC(\pi_n)$.
\end{thm}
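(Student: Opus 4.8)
\textbf{Proof proposal for Theorem \ref{thm:BP_I-J}.}

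The plan is to compare the global relative trace formulas of Jacquet--Rallis on the unitary side and the general linear side, localized at the place under consideration, but more efficiently to deduce the statement directly from the local spectral identity already established by Beuzart-Plessis. Concretely, I would argue that both $I_{BC(\pi)}$ and $J_\pi$ are, by construction, the local distributions attached to the same automorphic period (the Rankin--Selberg/Flicker--Rallis period on $\GL$ and the unitary period of Ichino--Ikeda type), and that the factorization of global periods into local terms forces a proportionality between $I_{BC(\pi)}(f')$ and $J_\pi(f_i)$ whenever $f'$ matches $f_i$. The content of the theorem is the precise value of the proportionality constant, namely $\omega_{BC(\pi_n)}(\tau)L(1,\eta_{E/F})^{-1}$.

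First I would reduce to a statement about a single pair of matching test functions. Fix $f_i\in C^\infty_c(G_i(F))$ and a matching $f'\in C^\infty_c(G'(F))$. Using the compatibility of orbital integrals under matching (the transfer factor $\Omega_{G'}$ from \Cref{section_orb_int}) together with the spectral expansions of the relative trace formula kernels, one expresses $I_{BC(\pi)}(f')$ as a sum over the relevant packet of terms of the form $\Omega_{G'}$-twisted orbital integrals paired against the spherical character, and similarly for $J_\pi(f_i)$; the matching of orbital integrals makes the geometric sides literally equal, so the spectral sides must agree term by term after accounting for the normalization of the local integrals $\lambda$, $\beta$, $\theta$ against the pairing $\langle\,,\,\rangle$ used in the definition of $J_\pi$. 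Second, I would pin down the constant by testing on a well-chosen function: take $\pi_n$, $\pi_{n+1}$ unramified (so $\pi$ itself is unramified and $f_i=\id_{G_i(\roi_F)}$, $f'=\id_{G'(\roi_F)}$ match up to an explicit volume factor), and compare the unramified computation \Cref{eq_J_unram} for $J_\pi$ with the analogous unramified computation of $I_\Pi$ in \cite{beuzart-plessis_2021}; the ratio of the two closed-form answers isolates $\omega_{BC(\pi_n)}(\tau)L(1,\eta_{E/F})^{-1}$. Third, since the constant depends only on $\pi_n$ (through its central character) and not on the test functions, this value propagates to all matching pairs.

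Alternatively, and perhaps more cleanly, I would invoke the main local theorem of \cite{beuzart-plessis_2021} essentially verbatim: Beuzart-Plessis proves exactly such an identity relating his $I_\Pi$ to the unitary spherical character for the Jacquet--Rallis setting, and the only adaptation needed here is (a) checking that our normalization of Haar measures (\Cref{section_measures}) agrees with his, which is why we chose them "as in \cite{beuzart-plessis_2021}", (b) checking that our transfer factors $\omega$, $\Omega_S$, $\Omega_{G'}$ match his conventions (following \cite{Yun}), and (c) tracking the factor $\omega_{BC(\pi_n)}(\tau)$ that arises from the appearance of $\epsilon_k(\tau)=\diag(\tau^{k-1},\dots,1)$ in the Flicker--Rallis functionals $\beta_k$. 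The Whittaker-type functionals here are defined with respect to $\psi_E$ and $\bar\psi_E$, and the element $\tau$ enters the unitary period through the base change identification; the central character twist $\omega_{BC(\pi_n)}(\tau)$ is precisely the discrepancy between evaluating the $\GL_n$-Whittaker functional at $\epsilon_n(\tau)$ versus at the identity.

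The main obstacle I anticipate is bookkeeping rather than conceptual: matching up all the normalizations --- the self-dual measures on $\ul(V)(F)$ and $\s(F)$, the measure on $U(V)(F)$ fixed via the Jacobian $L(1,\eta_{E/F})$ of the Cayley map, the measures on $\GL_m(E)$, and the transfer factors --- so that the constant comes out to be exactly $\omega_{BC(\pi_n)}(\tau)L(1,\eta_{E/F})^{-1}$ with no stray powers of $q_F$ or $q_E$. The factor $L(1,\eta_{E/F})^{-1}$ in particular should be traced to the normalization of the Haar measure on the unitary group (it is the Jacobian of $\cay$ at $0$), so one must verify that $I_\Pi$ was defined with the $\GL$-measures that are "self-dual-compatible" and hence do not carry this factor, while $J_\pi$ was defined with the unitary measure that does. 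I expect that once the measures are lined up, the equality is forced, and the only genuinely non-formal input is the existence of smooth transfer in both directions together with the density/regular-support arguments that let one pass from an identity of orbital integrals to the spectral identity.
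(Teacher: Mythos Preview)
The paper does not prove this theorem at all: it is stated as a known result and attributed directly to the literature, with the supercuspidal case due to Zhang \cite{Zhang14} and the general case to Beuzart-Plessis \cite[Theorem 3.5.7]{beuzart-plessis_2021}. Your second alternative---invoke Beuzart-Plessis's local theorem verbatim and check that the normalizations of Haar measures, transfer factors, and Whittaker data agree---is exactly the paper's approach, and indeed the measures in \Cref{section_measures} and the transfer factors in \Cref{section_orb_int} were chosen specifically to match \cite{beuzart-plessis_2021} so that no adjustment is needed. Your first approach, re-deriving the identity by comparing geometric and spectral sides of the global relative trace formulas, is essentially how Zhang and Beuzart-Plessis establish the result in the first place, but reproducing that here is unnecessary and would constitute a substantial detour; the step ``the geometric sides are literally equal, so the spectral sides must agree term by term'' in particular hides the entire difficulty (separation of the $\pi$-contribution, treatment of the continuous spectrum, choice of global test functions realizing a given local pair), which is precisely what \cite{beuzart-plessis_2021} handles.
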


\begin{rmk}
Note that $\omega_{BC(\pi_n)}(\tau)= 1$ when $\pi_n$ is unramified. 
\end{rmk}

Zhang \cite{Zhang14} proved this result for supercuspidal representations and later Beuzart-Plessis \cite[Theorem 3.5.7]{beuzart-plessis_2021} proved it in the general case.

Note that, with notations as in \Cref{section_intro}, to calculate $J_\pi(\id_{K^c})$ it is enough to find $f'\in C^\infty_c(G'(F))$ that matches $\id_{K^c}$ and calculate $I_{BC(\pi)}(f')$, which is what we do in the rest of this section and \Cref{section_calc_I} respectively.

\subsection{An explicit transfer statement}
The key result we will need is Jacquet--Rallis's Fundamental lemma \cite{J-R}, proved for fields with large enough residue field characteristic by Gordon and Yun \cite{Yun} and in full generality by Beuzart-Plessis \cite{BP_fl-proof}.  In \cite{Zhang_afl21}, Wei Zhang gave another proof via global method for large $p$ in parallel to his proof of the arithmetic fundamental lemma and Zhiyu Zhang \cite{zzy} proved both the usual and arithmetic fundamental lemma for all odd $p$.

\begin{thm}[Jacquet--Rallis's Fundamental Lemma]
\label{JR_fl}
Let $V_0, V_1, W_0, W_1$ be as in \Cref{section_orb_int} and suppose that the hermitian form on $W_0$ is given by the identity matrix $I_n$ and $\pa{u_i, u_i}=1$ for $i=0,1$. 
Then the pair $(\vol(U(W_0)(\roi_F))^{-1} \id_{\ul(V_0) (\roi_F)},0) \in C^\infty_c(\ul(V_0)(F)) \times C^\infty_c(\ul(V_1)(F))$ matches $\vol(\GL_n(\roi_F))^{-1} \id_{\s_{n+1}(\roi_F)} \in C^\infty_c(\s_{n+1}(F))$.
\end{thm}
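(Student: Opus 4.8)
The statement to be proved (Theorem \ref{JR_fl}) is precisely the Lie-algebra form of the Jacquet--Rallis fundamental lemma in the normalization of \cite{beuzart-plessis_2021}, for the unit Hecke functions, under the running hypotheses that $E/F$ is unramified, $W_0$ has hermitian form $I_n$, and $\langle u_i,u_i\rangle=1$. Since the paper cites \cite{J-R,Yun,BP_fl-proof,Zhang_afl21,zzy} for this result, the only real content of a self-contained proof here is to match conventions: identify the orbital integrals, transfer factors, and measures used in Section \ref{section_orb_int} with those in the references, so that the cited theorem applies verbatim. So the plan is \emph{not} to reprove the fundamental lemma, but to carefully reduce to its standard statement.

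First I would fix the two sides. On the general linear side, the relevant object is $\Orb(Y,\id_{\s_{n+1}(\roi_F)})=\int_{\GL_n(F)} \id_{\s_{n+1}(\roi_F)}(h^{-1}Yh)\eta_{E/F}(h)\,dh$ for $Y\in\s_{n+1}(F)^\rs$, with the transfer factor $\omega(Y)=\tilde\eta_{E/F}(\det(e_0^*,e_0^*Y,\dots,e_0^*Y^n))$. On the unitary side, for $X\in\ul(V_0)(F)^\rs$ matching $Y$ (i.e. $\GL_n(E)$-conjugate in $\gl_{n+1}(E)$), I would consider $\Orb(X,\id_{\ul(V_0)(\roi_F)})=\int_{U(W_0)(F)}\id_{\ul(V_0)(\roi_F)}(hXh^{-1})\,dh$, and the vanishing claim $\Orb(X,\id_{\ul(V_1)(\roi_F)})$ for $X$ matching an element not meeting the lattice $\ul(V_1)(\roi_F)$ — which is automatic once $V_1$ is the non-self-dual hermitian space and $\ul(V_1)(\roi_F)$ is taken with respect to a self-dual lattice, since then no regular semisimple orbit matching one that meets $\s_{n+1}(\roi_F)$ can meet $\ul(V_1)(\roi_F)$; this is the standard ``only one of the two hermitian spaces contributes'' phenomenon, and I would cite \cite{J-R} for the orbit bijection and the lattice-theoretic argument. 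The equality to establish is then
\[
\vol(U(W_0)(\roi_F))^{-1}\,\Orb(X,\id_{\ul(V_0)(\roi_F)})
=\omega(Y)\,\vol(\GL_n(\roi_F))^{-1}\,\Orb(Y,\id_{\s_{n+1}(\roi_F)}),
\]
for all matching $X,Y$, together with vanishing of the $V_1$-integral.

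The key steps, in order: (1) recall from \cite{Yun} or \cite{BP_fl-proof} the fundamental lemma in the form $\Orb(X,\id_{\ul(V_0)(\roi_F)})=\omega(Y)\Orb(Y,\id_{\s_{n+1}(\roi_F)})$ \emph{in their chosen measures}, where the Haar measures on $\GL_n(F)$, $U(W_0)(F)$ are normalized to give the hyperspecial maximal compacts volume $1$; (2) compare those normalizations to ours — in Section \ref{section_measures} the measure on $U(W)(F)$ is pinned down by requiring the Cayley-map Jacobian at $0$ to be $L(1,\eta_{E/F})$, giving $\vol(U(W_0)(\roi_F))=L(1,\eta_{E/F})\prod_{i=1}^n(1-(-q_F)^{-i})$ by \eqref{vol_UW_OF}, and $\vol(\GL_n(\roi_F))=\zeta_F(1)\prod_{i=1}^n(1-q_F^{-i})$ by \eqref{vol_GL_OF}; introducing the volume factors $\vol(U(W_0)(\roi_F))^{-1}$ and $\vol(\GL_n(\roi_F))^{-1}$ renormalizes both orbital integrals to be independent of the measure choice, which is exactly why they appear in the statement; (3) check the Lie-algebra measures (the self-dual measures on $\ul(V_0)(F)$ and $\s_{n+1}(F)$ with respect to the pairing $\langle X,Y\rangle=\tr(XY)$ and the fixed $\psi$ of conductor $\roi_F$) agree with those in the references — since $E/F$ is unramified and $\psi$ has conductor $\roi_F$, both self-dual lattices are the obvious integral ones, so these measures match on the nose; (4) confirm the transfer factor $\omega$ is the one used in \cite{Yun} (it is, by the explicit formula quoted from \cite{beuzart-plessis_2021,Yun}); (5) assemble: the cited fundamental lemma plus the measure bookkeeping gives the displayed identity, and the $V_1$-side vanishes.

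The main obstacle is entirely in step (2)--(3): the bookkeeping of Haar-measure normalizations. The references \cite{J-R,Yun,BP_fl-proof} each use slightly different normalizations (e.g. Gordon--Yun work with a fixed motivic/additive-character normalization, Beuzart-Plessis with yet another), and one must verify that dividing by the compact-open volumes $\vol(U(W_0)(\roi_F))$ and $\vol(\GL_n(\roi_F))$ produces measure-independent quantities on both sides, so that the identity transported from any one reference's normalization holds in ours. Concretely the ratio $\vol(U(W_0)(\roi_F))/\vol(\GL_n(\roi_F))$ must cancel against the ratio of whatever measures the reference uses — this is a short but fiddly computation using \eqref{vol_GL_OF}, \eqref{vol_UW_OF}, the self-duality of the Lie-algebra measures, and the Cayley-map Jacobian normalization $L(1,\eta_{E/F})$ for $U(W_0)$ versus $\zeta_F(1)$ for $\GL_n$. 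Everything else — the orbit matching, the vanishing on $V_1$, the shape of $\omega$ — is quoted directly from \cite{J-R} and \cite{Yun}.
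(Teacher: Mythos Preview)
Your proposal is correct and matches the paper's treatment: the paper does not prove Theorem~\ref{JR_fl} at all but simply states it as the Jacquet--Rallis fundamental lemma and cites \cite{J-R,Yun,BP_fl-proof,Zhang_afl21,zzy} for its proof. Your observation that the only content is convention-matching (measures, transfer factor, orbit bijection) is exactly right, and in fact you supply more detail on this bookkeeping than the paper does---the paper takes the normalization in \cite{beuzart-plessis_2021} as given and does not spell out the comparison.
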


In the rest of \Cref{section_fl}, we fix a non-negative integer $c$. Let $W_0$ be the $n$-dimensional hermitian space over $E$ with hermitian form $I_n$. Let $V_0=W_0\oplus^\perp \pa{u_0}$, where $\pa{u_0,u_0}=\vp^c$. 
Let $W_1$ be an $n$-dimensional hermitian space over $E$ with discriminant $-1$, then choose $u_1, V_1$ and define $G_0, G_1, H_0, H_1$ accordingly as in \Cref{section_orb_int}.
Let \index{Kn@$K_n$} $K_n=U(W_0)(\roi_F)$, \index{Kn1@$K^c_{n+1}$}
$K^c_{n+1}=U(V_0)(\roi_F)\cap \bm{K}^c_{n+1}$, and $K^c=K_n \times K^c_{n+1}$. Note that when we set $\Lambda_n=\roi_E^n \subset W_0$ and $\Lambda_{n+1}=\roi_E^n \oplus \roi_E u_0 \subset V_0$, definitions of $K_n, K^c_{n+1}, K^c$ here coincide with those given in \Cref{section_intro}.

The main result we prove in this section is the following transfer statement. 
\begin{thm}[homogeneous version]
\label{hom_fl}
With $K^{\prime,c}$ as in \Cref{notation_n_measure}, the pair $(\id_{K^c},0) \in C^\infty_c(G_0(F)) \times  C^\infty_c(G_1(F))$ matches $c_1 \id_{K^{\prime,c}} \in C^\infty_c(G'(F))$
where \index{c1@$c_1$}
\begin{align*}
c_1&=\frac{\vol(K_n)^2}{\vol(\GL_n(\roi_F))\vol(K'_n)\vol(K^{\prime, c}_{n+1}\cap \GL_{n+1}(\roi_F))}\\
&=L(1,\eta_{E/F})^2\zeta_F(1)^{-2}\zeta_E(1)^{-1}q_F^{c(n+1)}
\prod_{i=1}^n \frac{(1-(-q_F)^{-i})^2}{(1-q_F^{-i})^{3} (1+q_F^{-i})}.
\end{align*}
\end{thm}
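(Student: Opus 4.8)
The plan is to deduce the homogeneous group statement (\Cref{hom_fl}) from the inhomogeneous group statement (\Cref{inhom_fl}) by unwinding the relations \eqref{u_hom_to_inhom}, \eqref{s_hom_to_inhom} between homogeneous and inhomogeneous orbital integrals, together with the Cayley transform relations that convert group orbital integrals into Lie algebra ones; the Lie algebra case is where \Cref{lie_alg_fl}, and ultimately the Jacquet--Rallis fundamental lemma \Cref{JR_fl}, is invoked. Concretely, I would first observe that matching of $(\id_{K^c}, 0)$ with $c_1 \id_{K^{\prime,c}}$ on $G'$ reduces, via \eqref{u_hom_to_inhom} and \eqref{s_hom_to_inhom}, to a matching of the pushed-forward functions $\widetilde{\id_{K^c}}$ on $U(V_0)(F)$ and $\widetilde{c_1 \id_{K^{\prime,c}}}$ on $S_{n+1}(F)$ (with the appropriate transfer factors $\Omega_S$, $\Omega_{G'}$ already built to be compatible with the relations, so that the factor $\tilde\eta_{E/F}(x_1^{-1}x_2)^n$ cancels on both sides). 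The main computational content is therefore: (1) compute $\widetilde{\id_{K^c}}$ explicitly and identify it (up to a volume constant) with $\id_{U(V_0)(\roi_F)\cap \bm{K}^c_{n+1}}$; (2) compute $\widetilde{c_1 \id_{K^{\prime,c}}}$ on the $S_{n+1}$ side and identify it with a multiple of $\id_{\s_{n+1}(\roi_F)}$-type function transported to $S_{n+1}$; (3) check that \Cref{inhom_fl} (or its Cayley-transformed Lie algebra form) says these two match, and that the constant $c_1$ is exactly the one needed to make the volume bookkeeping work out.

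For step (1), I would use the Iwasawa-type decomposition $g = (h(1,x))$ underlying \eqref{f_tilde}: since $f = \id_{K^c} = \id_{K_n} \times \id_{K^c_{n+1}}$ is a product, $\tilde f(x) = \int_{H_0(F)} \id_{K_n}(h)\id_{K^c_{n+1}}(hx)\,dh = \vol(K_n \cap x^{-1}K^c_{n+1})$, and because $K_n = U(W_0)(\roi_F) \subseteq K^c_{n+1}$ (as $U(W_0)(\roi_F)$ sits inside $U(V_0)(\roi_F)$ and preserves the relevant congruence condition), this equals $\vol(K_n)\id_{K^c_{n+1}}(x)$. So $\widetilde{\id_{K^c}} = \vol(K_n)\,\id_{K^c_{n+1}}$ with $K^c_{n+1} = U(V_0)(\roi_F)\cap \bm{K}^c_{n+1}$. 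For step (2) on the $S_{n+1}$ side I would similarly unwind \eqref{f_tilde_s}, using $r: \GL_{n+1}(E)\to S_{n+1}(F)$, $g\mapsto g\bar g^{-1}$, and the fact that $r(\GL_{n+1}(\roi_E)) = S_{n+1}(\roi_F)$ (with fiber volume computed from $\vol(\GL_{n+1}(\roi_E))$ over $\vol$ of the preimage); here the character $\tilde\eta_{E/F}(gh_2)^n$ needs to be tracked, but on $\GL_{n+1}(\roi_E)\times\GL_{n+1}(\roi_F)$ it is trivial, so it contributes nothing over the support. The upshot should be $\widetilde{c_1\id_{K^{\prime,c}}} = c_1 \cdot (\text{volume factor}) \cdot \id_{S_{n+1}(\roi_F)\cap(\text{image of }\bm{K}^c_{n+1})}$.

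The main obstacle I expect is the careful bookkeeping of Haar-measure volumes and the congruence conditions defining $\bm{K}^c_{n+1}$, $K^{\prime,c}_{n+1}$, and $K^c_{n+1}$ — making sure that the push-forward $\widetilde{(-)}$ of the congruence-subgroup characteristic function on the general linear side really lands on the correct characteristic function on $S_{n+1}(F)$ compatible (under Cayley and the matching bijection of \Cref{inhom_fl}) with $K^c_{n+1}$ on the unitary side, and that all the $q_E$ versus $q_F$ volume ratios and $\zeta$/$L$-factors assemble into the stated $c_1$. The actual transfer (that the resulting pair of functions match) is not new — it is precisely \Cref{inhom_fl}, whose proof in turn rests on \Cref{lie_alg_fl} and \Cref{JR_fl}; the work here is purely to (i) verify the functions involved are the right ones and (ii) pin down the normalizing constant. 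I would organize the proof by first stating the inhomogeneous matching I need as an intermediate lemma, reducing \Cref{hom_fl} to it via \eqref{u_hom_to_inhom} and \eqref{s_hom_to_inhom} and the transfer-factor definitions, and then verifying the two explicit push-forward computations and the constant. A useful sanity check at the end is to specialize to $c = 0$: then $\bm{K}^c_{n+1}$ is all of $\GL_{n+1}(\roi_E)$, $K^c = G_0(\roi_F)$, $K^{\prime,c} = G'(\roi_F)$, and $c_1$ should reduce to the constant implicit in \Cref{JR_fl} (namely $\vol(U(W_0)(\roi_F))^2 / (\vol(\GL_n(\roi_F))\vol(\GL_n(\roi_E))\vol(\GL_{n+1}(\roi_F)))$ after accounting for the normalizations), which I would confirm using \eqref{vol_GL_OF}, \eqref{vol_GL_OE}, and \eqref{vol_UW_OF}.
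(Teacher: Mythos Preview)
Your proposal is correct and follows the paper's approach essentially step by step: reduce to \Cref{inhom_fl} via \eqref{u_hom_to_inhom} and \eqref{s_hom_to_inhom}, compute both push-forwards $\widetilde{\id_{K^c}}=\vol(K_n)\id_{K^c_{n+1}}$ and $\widetilde{\id_{K^{\prime,c}}}$, and assemble the constant. The obstacle you anticipate on the $S_{n+1}$ side is real and is resolved in the paper by a separate lattice-theoretic lemma (\Cref{inhom_to_hom}) showing $r(g)\in K^{\prime,c}_S \iff g\,\GL_{n+1}(F)\cap K^{\prime,c}_{n+1}\neq\emptyset$; note also that \Cref{inhom_fl} is stated for the larger groups $\wtil{K}^c_{n+1}$, $\wtil{K}^{\prime,c}_S$, so the paper inserts a short passage from tilde to non-tilde using the invariance of the $(n+1,n+1)$ entry under $\GL_n$-conjugation, a step you will need as well.
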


Rapoport, Smithling, and Zhang \cite{RSZ} proved a similar result when $c=1$. Their approach is easily generalizable to the case of $c>1$, so we will prove \Cref{hom_fl} following their method. 
 
Let \index{Knt@$\wtil{K}^c_{n+1}$} $\widetilde{K}^c_{n+1}=\wtil{\bm{K}}^c_{n+1} \cap U(V_0)(F)$ and \index{Knt@$\wtil{K}_n$} $\widetilde{K}_{n}={\wtil{K}^c_{n+1}\cap U(W_0)(F)} =U(W_0)(\roi_F)=K_{n}$.
Let \index{Kst@$\wtil{K}^{\prime, c}_S$} $\wtil{K}^{\prime, c}_S=\wtil{\bm{K}}^c_{n+1} \cap S_{n+1}(\roi_F)$ and \index{Ks@$K^{\prime, c}_S$} $K^{\prime, c}_S= \bm{K}^c_{n+1} \cap S_{n+1}(\roi_F)$.
To prove \Cref{hom_fl} we need the following transfer statement, which we will prove in \Cref{pf_inhom_fl}.

\begin{lem}[inhomogeneous version]
\label{inhom_fl}
With notations as in the paragraph above, the pair $(\id_{\wtil{K}_{n+1}^c},0) \in C^\infty_c(U(V_0)(F)) \times C^\infty_c(U(V_1)(F))$ matches $ \vol(\wtil{K}_n)\vol(\GL_n(\roi_F))^{-1} \id_{\wtil{K}^{\prime, c}_S} \in C^\infty_c(S_{n+1}(F))$.
\end{lem}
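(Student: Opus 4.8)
\textbf{Proof proposal for Lemma \ref{inhom_fl}.}

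The plan is to reduce the inhomogeneous group statement to the Lie algebra fundamental lemma (Theorem \ref{JR_fl}) via the Cayley map, using the compatibility between orbital integrals on $U(V_i)(F)$ and on $\ul(V_i)(F)$ (and likewise between $S_{n+1}(F)$ and $\s_{n+1}(F)$) recorded in Section \ref{section_orb_int}. First I would pick a suitable $\xi \in E$ with $\xi\bar\xi = 1$ — for the unramified setup the natural choice is $\xi = 1$, so that $\cay_\xi = \cay$; one must check that $\cay^{-1}$ carries the congruence-type compact sets into the ``Lie algebra'' congruence sets. Concretely, I expect to verify that $(\id_{\wtil K^c_{n+1}})_\xi = \id_{\mathfrak{L}}$ for an explicit lattice $\mathfrak{L} \subset \ul(V_0)(F)$ of the shape $\{X = \mat{A & y \\ z & w} : A \in \Mat_{n\times n}(\roi_E),\ y \in \vp^c\roi_E^n \ (\text{or}\ \roi_E^n),\ z \in \roi_E{}_n,\ w \in \roi_E,\ X \in \ul(V_0)\}$, and similarly that $(\id_{\wtil K^{\prime,c}_S})_\xi = \id_{\mathfrak{L}'}$ for the analogous lattice $\mathfrak{L}' \subset \s_{n+1}(F)$. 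The point is that $\cay$ is defined over $\roi_F$ away from $\det(1-X)=0$, and modulo $\vp^c$ the block-triangularity condition defining $\wtil{\bm K}^c_{n+1}$ translates into a block-triangularity condition on $X$; the reduction $\det(1 - X)\in\roi_E^\times$ holds on these lattices because the relevant reductions mod $\vp$ are unipotent-minus-identity being invertible is automatic on $1+\vp(\cdots)$ — here I would need to be slightly careful and possibly conjugate $\Lambda_{n+1}$ so that the self-dual/almost-self-dual lattice is in standard position, exactly as in \cite{RSZ}.

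Next, granting the set-theoretic translations, the matching of $(\id_{\wtil K^c_{n+1}}, 0)$ with a multiple of $\id_{\wtil K^{\prime,c}_S}$ becomes the matching of $((\id_{\mathfrak L}), 0)$ with a multiple of $\id_{\mathfrak L'}$ on the Lie algebras, because $\cay$ and $\cay_\xi$ are equivariant bijections intertwining the orbital integrals with the same transfer factors ($\Omega_S$ pulls back to $\omega$, $f \mapsto f_\xi$ preserving orbital integrals as stated in Section \ref{section_orb_int}). So it suffices to prove a Lie-algebra statement: $(\vol(K_n)^{-1}\id_{\mathfrak L}, 0)$ matches $\vol(\GL_n(\roi_F))^{-1}\id_{\mathfrak L'}$. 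When $c = 0$ this is precisely Theorem \ref{JR_fl} (after the normalization bookkeeping, since then $\mathfrak L = \ul(V_0)(\roi_F)$, $\mathfrak L' = \s_{n+1}(\roi_F)$, and $\pa{u_0,u_0}=1$). For general $c$ I would pass from $\pa{u_0,u_0}=1$ to $\pa{u_0,u_0}=\vp^c$ by the obvious rescaling $u_0 \mapsto \vp^{\fl{c/2}}u_0$ and track how the standard lattice transforms; conjugating by $\diag(1,\dots,1,\vp^{\fl{c/2}})$ (or its $E$-analog on the $\GL$ side) carries the $c=0$ lattices to the $\mathfrak L, \mathfrak L'$ above, and since this conjugation is by an element normalizing $\GL_n$ and acting on orbital integrals by a computable Jacobian/character factor, the matching is preserved up to the explicit constant $\vol(\wtil K_n)\vol(\GL_n(\roi_F))^{-1}$ claimed. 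The main work is to check that this conjugation does not move us outside the regular semisimple locus where matching is tested and that the transfer factor $\Omega_S$ (equivalently $\omega$, built from $e_0^* = (0\cdots 0\ 1)$) transforms correctly — the last row $e_0^*$ is exactly the coordinate being scaled, so one gets a clean power of $\tilde\eta_{E/F}(\vp^{\fl{c/2}})$ which, combined with the parity hypothesis $2 \mid c - \epsilon$ built into the choice of $J$, produces no sign discrepancy.

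The constant is then forced: matching determines the scalar on the $S_{n+1}$ side uniquely (orbital integrals separate functions up to the obvious symmetry), so I only need to compute one orbital integral, or more simply compare volumes of the ``identity'' orbit contributions. Taking $X$ in the open orbit meeting $\mathfrak L$ and $Y = $ its match in $\mathfrak L'$, the ratio $\Orb(X, \id_{\mathfrak L})/\Orb(Y,\id_{\mathfrak L'})$ equals (a power of $q$ coming from the relative index of the scaled lattice) $\times$ $\omega(Y)$, and after dividing by $\vol(K_n)$ on the left and multiplying by $\vol(\GL_n(\roi_F))$ on the right one reads off $\vol(\wtil K_n)\vol(\GL_n(\roi_F))^{-1}$, recalling $\wtil K_n = K_n = U(W_0)(\roi_F)$ whose volume is \eqref{vol_UW_OF}.

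\medskip

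\noindent\emph{Main obstacle.} The genuinely delicate point is the identification $(\id_{\wtil K^c_{n+1}})_\xi = \id_{\mathfrak L}$ (and its $S$-analog): one must verify that the Cayley transform, which is only a birational map, actually restricts to a \emph{bijection} between the specified compact open subsets — i.e. that $\det(1-X) \in \roi_E^\times$ throughout $\mathfrak L$ and that the mod-$\vp^c$ block shape is preserved in both directions. This requires a careful matrix computation of $\cay$ and $\cay^{-1} (g) = (g-\xi)(g+\xi)^{-1}$ in block form adapted to the decomposition $V_0 = W_0 \oplus \pa{u_0}$, keeping track of the non-trivial scaling $\vp^c$ in the hermitian form $J = \diag(I_n, \vp^c)$. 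Once that dictionary is in place, everything else is formal transport of structure plus a volume computation. I would handle this computation in \Cref{pf_inhom_fl} as promised, likely by first treating $\xi$ so that $g + \xi$ is visibly invertible on the relevant set and then doing the block inversion by hand.
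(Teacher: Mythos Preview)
Your overall architecture (Cayley descent to the Lie algebra, then rescale to reduce to Jacquet--Rallis) is exactly the paper's, but there is a genuine gap at the step you flag as the ``main obstacle,'' and your proposed resolution does not work. You want to fix a single $\xi$ (you suggest $\xi=1$) and prove a function identity $(\id_{\wtil K^c_{n+1}})_\xi = \id_{\mathfrak L}$. This fails: $\tilde{\lk}_c = \wtil{\bm K}^c_{n+1}\cap \ul(V_0)(F)$ contains plenty of $X$ with $\det(1-X)\notin\roi_E^\times$ (any $X$ with an eigenvalue $\equiv 1 \bmod \vp$), and for those $\cay(X)\notin \wtil K^c_{n+1}$; your sentence ``the relevant reductions mod $\vp$ are unipotent\ldots'' is simply not true for $\wtil K^c_{n+1}$ or its Lie algebra analogue. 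The paper's fix is not to find a global $\xi$ but to choose $\xi$ \emph{depending on the regular semisimple test element} $g$: since $g$ has at most $n+1$ eigenvalues mod $\vp$ and there are $q_F+1$ norm-one residues, the hypothesis $q_F>n$ guarantees a $\xi$ with $\det(g+\xi)\in\roi_E^\times$, and then a short lemma (stability of $\wtil{\bm K}^c_{n+1}$ under $g\mapsto (g-\xi)(g+\xi)^{-1}$ when $\det(g+\xi)\in\roi_E^\times$) gives $\Orb(g,\id_{\wtil K^c_{n+1}})=\Orb(\cay_\xi^{-1}(g),\id_{\tilde{\lk}_c})$ one element at a time. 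This is where $q_F>n$ enters, and your write-up never uses it.

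Your reduction from general $c$ to $c=0$ also needs repair. Conjugation by $\diag(I_n,\vp^{\fl{c/2}})$ sends $\mat{x&y\\z&w}$ to $\mat{x&\vp^{-\fl{c/2}}y\\\vp^{\fl{c/2}}z&w}$, which neither carries $\tilde{\lk}'_c$ to $\s_{n+1}(\roi_F)$ nor fixes the transfer factor $\omega$ (which is built from the bottom row $z$); the sign you hope to cancel via ``$2\mid c-\epsilon$'' is irrelevant here since Lemma~\ref{inhom_fl} carries no parity hypothesis. The paper instead uses the \emph{non-conjugation} linear isomorphism $\iota:\mat{x&y\\z&w}\mapsto\mat{x&\vp^{-c}y\\z&w}$, which identifies $\ul(V_0)$ (for $\pa{u_0,u_0}=\vp^c$) with $\ul(V'_0)$ (for $\pa{u'_0,u'_0}=1$), is $U(W_0)$-equivariant, sends $\tilde{\lk}_c$ to $\ul(V'_0)(\roi_F)$ and $\tilde{\lk}'_c$ to $\s_{n+1}(\roi_F)$, preserves matching of orbits, and---because it leaves the $z$-row untouched---satisfies $\omega(\iota(Y))=\omega(Y)$. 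With $\iota$ in hand the Lie algebra statement is literally Theorem~\ref{JR_fl}, with no extra constants or signs to chase.
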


\begin{proof}[Proof of \Cref{hom_fl} assuming \Cref{inhom_fl}] 
Assume \Cref{inhom_fl} holds, the pair $(\id_{K^c_{n+1}},0) \in C^\infty_c(U(V_0)(F)) \times C^\infty_c(U(V_1)(F))$ matches $\vol(U(W_0)(\roi_F))\vol(\GL_n(\roi_F))^{-1}\id_{K^{\prime, c}_S} \in C^\infty_c(S_{n+1}(F))$.
Indeed, for any $x=(x_{ij})\in U(V_0)(F)^\rs$, let $y=(y_{ij}) \in S_{n+1}(F)^\rs$ be an element that matches $x$, then  there exists $g\in GL_{n}(E)$ such that $y=g^{-1}xg$. Note that the $(n+1, n+1)$-th entry of $x$ does not change under the action of $GL_{n}(E)$.
So when $x_{n+1, n+1} \equiv 1 \md{\vp^c}$ and $h\in GL_n(E)$, $hxh^{-1}\in \bm{K}^c_{n+1}$ if and only if $hxh^{-1}\in \wtil{\bm{K}}^c_{n+1}$, and similarly for $y$. Hence by \Cref{inhom_fl}
\begin{align*}
\Orb(x,\id_{K^c_{n+1}})&=  \Orb(x, \id_{\wtil{K}^c_{n+1}})\\
&=\vol(\wtil{K}_n)\vol(\GL_n(\roi_F))^{-1} \Omega_S(y)\Orb(y, \id_{\wtil{K}^{\prime, c}_S}) \\
&= \vol(\wtil{K}_n)\vol(\GL_n(\roi_F))^{-1}\Omega_S(y)\Orb(y, \id_{K^{\prime, c}_S}),
\end{align*}
when $x_{n+1, n+1} \equiv 1 \md{\vp^c}$, and 
\[\Orb(x,\id_{K^c_{n+1}})=0=\vol(\wtil{K}_n)\vol(\GL_n(\roi_F))^{-1}\Omega_S(y)\Orb(y, \id_{K^{\prime, c}_S})\]
when $x_{n+1, n+1} \not \equiv 1 \md{\vp^c}$.

Now for matching $x=(x_{ij})\in U(V_1)(F)^\rs$ and $y=(y_{ij})\in S_{n+1}(F)^\rs$, 
\[\Orb(y, \id_{K^{\prime, c}_S})=\begin{cases}
0 & \text{ if }y_{n+1, n+1} \not \equiv 1 \md{\vp^c},\\
\Orb(y, \id_{\wtil{K}^{\prime, c}_S}) = \Orb(x,0) = 0 & \text{ if } y_{n+1, n+1} \equiv 1 \md{\vp^c}.
\end{cases}\]
Hence $(\id_{K^c_{n+1}},0)$ matches $\vol(U(W_0)(\roi_F))\vol(\GL_n(\roi_F))^{-1}\id_{K^{\prime, c}_S}$.

It is easy to see $\tilde{\id}_{K^c}=\vol(K_n)\id_{K^c_{n+1}}$ (see \Cref{f_tilde} for the definition of $\tilde{\id}_{K^c}$). 
To see that $\tilde{\id}_{K^{\prime, c}}$ (as defined in \Cref{f_tilde_s}) is a multiple of $\id_{K^{\prime, c}_{S}}$ takes a little more work. For any $y\in S_{n+1}(F)^\rs$, choose $g\in GL_{n+1}(E)$ such that $r(g)=y$, then
\begin{align*}
\tilde{\id}_{K^{\prime, c}}(y)&=\int_{GL_n(E)}\int_{GL_{n+1}(F)} \id_{K'_n \times K'_{n+1}}(h_1,h_1gh_2)(-1)^{\nu(gh_2)} dh_2 dh_1\\
&=\int_{K'_n}\int_{GL_{n+1}(F)} \id_{K^{\prime, c}_{n+1}}(h_1gh_2)(-1)^{\nu(gh_2)} dh_2 dh_1\\
&=\int_{K'_n}\int_{GL_{n+1}(F)} \id_{K^{\prime, c}_{n+1}}(gh_2) (-1)^{\nu(gh_2)} dh_2 dh_1\\
&=\vol(K'_n)\int_{GL_{n+1}(F)} \id_{K^{\prime, c}_{n+1}}(gh_2) dh_2,
\end{align*}
where the third step is because $K'_n\subset K^{\prime, c}_{n+1}$ and the last step is because the determinant of any element in $K^{\prime, c}_{n+1}$ has valuation 0.
Note that if $gh_2\in K^{\prime, c}_{n+1}$ for some $h_2\in GL_{n+1}(F)$, then $gh_2'\in K^{\prime, c}_{n+1}$ for $h_2'\in GL_{n+1}(F)$ if and only if $h_2'\in h_2(\GL_{n+1}(F)\cap K^{\prime, c}_{n+1})$. 
\Cref{inhom_to_hom} below implies that if $y\notin K^{\prime, c}_S$ then $\tilde{\id}_{K^{\prime, c}}(y)=0$ and if $y\in K^{\prime, c}_S$ then $\tilde{\id}_{K^{\prime, c}}(y)=\vol(K'_n)\vol(\GL_{n+1}(F)\cap K^{\prime, c}_{n+1})$. 
In other words, 
\[\tilde{\id}_{K^{\prime, c}}=\vol(\GL_{n+1}(F)\cap K^{\prime, c}_{n+1})\vol(K'_n)\id_{K^{\prime, c}_S}.\]

For matching $(x_1, x_2)\in G_1(F)^\rs$ and $(y_1, y_2)\in (G')(F)^\rs$, 
\[\Orb((y_1, y_2), \id_{K^{\prime, c}})= \Orb(y_1^{-1}y_2, \vol(\GL_{n+1}(F)\cap K^{\prime, c}_{n+1})\vol(K'_n)\id_{K^{\prime, c}_S})=\Orb(x_1^{-1}x_2,0)=0.\]

For any matching $(x_1, x_2)\in G_0(F)^\rs$ and $(y_1, y_2)\in (G')(F)^\rs$, Equations (\ref{u_hom_to_inhom}), (\ref{s_hom_to_inhom}) and (\ref{vol_GL_KF}) imply that 
\begin{align*}
\Orb((x_1, x_2),\id_{K^c})
&=\Orb(x_1^{-1}x_2, \vol(K_n)\id_{K^c_{n+1}}) \\
&= \vol(K_n)^2 \vol(\GL_n(\roi_F))^{-1}\Omega_S(\nu(y_1^{-1}y_2)) \Orb(\nu(y_1^{-1}y_2), \id_{K^{\prime, c}_S})\\ 
&= \frac{\vol(K_n)^2 \Omega_{G'}((y_1, y_2)) \Orb((y_1, y_2),\id_{K^{\prime, c}})}{\vol(\GL_n(\roi_F)) \vol(K'_n) \vol(\GL_{n+1}(F)\cap K^{\prime, c}_{n+1})} .
\end{align*}
Now the theorem follows from Equations (\ref{vol_GL_OF}), (\ref{vol_GL_OE}) and (\ref{vol_UW_OF}).
\end{proof}

\begin{lem}
\label{inhom_to_hom}
For any $g\in GL_{n+1}(E)$, $r(g)\in K^{\prime, c}_S$ if and only if $g\GL_{n+1}(F)\cap K^{\prime, c}_{n+1} \neq 0$.
\end{lem}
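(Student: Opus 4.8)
The plan is to analyze the condition $r(g) \in K^{\prime,c}_S$ directly in terms of the matrix entries of $g$ modulo $\vp^c$, and to exploit the fact that both $K^{\prime,c}_S$ (inside $S_{n+1}$) and $K^{\prime,c}_{n+1}$ (inside $\GL_{n+1}(E)$) are cut out by the same block-congruence condition defining $\bm{K}^c_{n+1}$, namely the vanishing mod $\vp^c$ of the upper-right $n\times 1$ block. First I would observe that the ``if'' direction is essentially immediate: if $gh_2 \in K^{\prime,c}_{n+1}$ for some $h_2 \in \GL_{n+1}(F)$, then since $\overline{h_2} = h_2$ we have $r(g) = r(gh_2) = (gh_2)\overline{(gh_2)}^{-1}$, and it remains to check that $r$ carries $\bm{K}^c_{n+1} \cap \GL_{n+1}(\roi_E)$ into $\bm{K}^c_{n+1} \cap S_{n+1}(\roi_F)$. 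This is a congruence computation: writing $\gamma = gh_2 \in K^{\prime,c}_{n+1}$, both $\gamma$ and $\overline{\gamma}$ are in $\bm{K}^c_{n+1}$, and one checks that the product $\gamma\overline{\gamma}^{-1}$ still has upper-right block $\equiv 0 \bmod \vp^c$ — the block-triangular shape mod $\vp^c$ is preserved under multiplication and inversion of units.

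For the ``only if'' direction — the substantive one — suppose $y := r(g) = g\overline{g}^{-1} \in K^{\prime,c}_S \subset \GL_{n+1}(\roi_F)$. I want to produce $h_2 \in \GL_{n+1}(F)$ with $gh_2 \in K^{\prime,c}_{n+1}$. The natural strategy is a descent/approximation argument: since $r: \GL_{n+1}(E) \to S_{n+1}(F)$ is surjective (Hilbert 90 style, as noted in the excerpt) and $E/F$ is unramified, $r$ is also surjective on integral points, i.e. $r(\GL_{n+1}(\roi_E)) \supseteq S_{n+1}(\roi_F)$ — in fact $\GL_{n+1}(\roi_E) \to S_{n+1}(\roi_F)$ is surjective because $H^1$ of the unramified Galois group with coefficients in $\GL_{n+1}(\roi_E)$ vanishes by Lang's theorem on the residue field and successive approximation. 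So there is $g_0 \in \GL_{n+1}(\roi_E)$ with $r(g_0) = y$; then $g = g_0 \cdot k$ with $r(k) = 1$, i.e. $k \in \GL_{n+1}(F)$ (since $k\overline{k}^{-1} = 1$ means $k = \overline{k}$). Taking $h_2 = k^{-1}$ gives $gh_2 = g_0 \in \GL_{n+1}(\roi_E)$, but I still need $g_0 \in \bm{K}^c_{n+1}$, i.e. the upper-right block of $g_0$ vanishes mod $\vp^c$. This is where the hypothesis $y \in \bm{K}^c_{n+1}$ must be fed back in: I would refine the lift, choosing $g_0$ so that its reduction mod $\vp^c$ is block-upper-triangular, using that the congruence $g_0 \overline{g_0}^{-1} \equiv y \bmod \vp^c$ with $y$ block-triangular mod $\vp^c$ constrains $g_0$, up to right multiplication by $\GL_{n+1}(\roi_E/\vp^c) \cap S_{n+1}$-stabilizing elements, to the desired shape. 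Concretely, one runs the surjectivity of $\GL_{n+1}(\roi_E) \to S_{n+1}(\roi_F)$ relative to the filtration by the block-parabolic, reducing to the analogous statement mod $\vp$ over the residue field (where it is a Lang-theorem computation for the relevant parabolic) and then lifting step by step through $\vp^j/\vp^{j+1}$ for $j < c$.

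The main obstacle I anticipate is exactly this last refinement: showing that the freedom in choosing the integral lift $g_0$ of $y$ is enough to arrange the upper-right block to vanish mod $\vp^c$, i.e. that the map $\{g_0 \in \bm{K}^c_{n+1} \cap \GL_{n+1}(\roi_E)\} \to K^{\prime,c}_S$ given by $r$ is surjective. Everything else is bookkeeping with congruences, but this surjectivity requires either an explicit normal-form argument for elements of $S_{n+1}(\roi_F)$ with block-triangular reduction, or a cohomological vanishing statement for the subgroup scheme $\bm{K}^c_{n+1}$ under the $\sigma$-twisted action. I would handle it by induction on $c$: for $c=0$ it is just surjectivity of $\GL_{n+1}(\roi_E)\to S_{n+1}(\roi_F)$; for the inductive step, given a lift good mod $\vp^{c}$ one corrects it by an element of $1 + \vp^{c}\Mat_{n+1}(\roi_E)$ to kill the upper-right block mod $\vp^{c+1}$ while preserving $r(g_0) = y$ mod $\vp^{c+1}$, which amounts to solving a linear equation $m - \overline{m} \equiv (\text{known}) \bmod \vp$ in the residue field — solvable since $\tr_{E/F}$ is surjective on residue fields in the unramified case. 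Once this surjectivity is in hand, combining it with the ``if'' direction and the observation that $r(g) = r(g h_2)$ for $h_2 \in \GL_{n+1}(F)$ closes the proof.
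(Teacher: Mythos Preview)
Your outline is a genuine alternative to the paper's argument, but there are two points that need sharpening before it goes through, and the overall route differs from what the paper does.

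First, the paper's proof of the ``only if'' direction does not use Lang's theorem or successive approximation at all. It exploits instead the description
\[
\wtil{K}^{\prime,c}_{n+1}=\Stab_{\GL_{n+1}(E)}(\Lambda_1)\cap\Stab_{\GL_{n+1}(E)}(\Lambda_2),
\qquad \Lambda_1=\roi_E^{n+1}\subset\Lambda_2=\roi_E^n+\vp^{-c}\roi_E,
\]
and argues that if $g\bar g^{-1}$ stabilizes both lattices then $g^{-1}\Lambda_i=\overline{g^{-1}\Lambda_i}$ is Galois-stable, hence arises by base change from an $\roi_F$-lattice $L_i\subset F^{n+1}$. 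The elementary-divisor theorem for $L_1\subset L_2$ over $\roi_F$ then produces an explicit $h\in\GL_{n+1}(F)$ with $gh\in\wtil{K}^{\prime,c}_{n+1}$. This is a direct lattice/descent argument rather than a cohomological one. Your Lang--Hensel approach is valid in principle (it amounts to the vanishing of $H^1(\Gal(E/F),\mathcal{G}(\roi_E))$ for the parahoric group scheme $\mathcal{G}$ attached to $\Lambda_1\subset\Lambda_2$, which does have connected special fiber), and it has the advantage of being more structural; the paper's argument is more elementary and self-contained.

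Second, two concrete gaps in your write-up. You describe $\bm{K}^c_{n+1}$ as cut out only by the vanishing of the upper-right block mod $\vp^c$, but in fact it also requires the $(n+1,n+1)$ entry to be $\equiv 1\bmod\vp^c$; the object defined by the block condition alone is $\wtil{\bm{K}}^c_{n+1}$. Your argument as written only lands you in $\wtil{K}^{\prime,c}_{n+1}$, and one still has to adjust by a diagonal element $\diag(I_n,d_1)$ with $d_1\in\roi_F^\times$ to force the corner congruence; the paper carries this out explicitly as a separate step. In addition, your inductive step is ambiguous: you speak of correcting $g_0$ by an element of $1+\vp^{c}\Mat_{n+1}(\roi_E)$ ``while preserving $r(g_0)=y$ mod $\vp^{c+1}$'', but such a correction does not preserve $r(g_0)=y$ exactly, so one cannot first fix an integral lift of $y$ and then massage it into shape this way. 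The clean version is to run the entire successive-approximation (or the $H^1$-vanishing) inside the parahoric from the start, producing in the limit a $g_0\in\wtil{K}^{\prime,c}_{n+1}$ with $r(g_0)=y$ on the nose.
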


\begin{proof}
Recall that $\wtil{K}^{\prime, c}_{n+1}=\wtil{\bm{K}}^c_{n+1} \cap GL_{n+1}(\roi_E)$ and $\wtil{K}^{\prime, c}_S=\wtil{\bm{K}}^c_{n+1} \cap S_{n+1}(\roi_F)$. We first show that $r(g)\in \wtil{K}^{\prime, c}_S$ implies $gGL_{n+1}(F)\cap \wtil{K}^{\prime, c}_{n+1} \neq 0$. Let $\Lambda_1=\roi_E^{n+1} \subset \Lambda_2 = \roi_E^n + \vp^{-c}\roi_E$ be lattices in $E^{n+1}$, then $\wtil{K}^{\prime, c}_{n+1}=\Stab_{GL_{n+1}(E)}(\Lambda_1) \cap \Stab_{GL_{n+1}(E)}(\Lambda_2)$. 
Suppose that $r(g)=g\bar{g}^{-1}\in \wtil{K}^{\prime, c}_S \subset \wtil{K}^{\prime, c}_{n+1}$, then for $i=1,2$ we have $g\bar{g}^{-1}\Lambda_i=\Lambda_i$, i.e., $g^{-1}\Lambda_i=\bar{g}^{-1}\Lambda_i=\br{g^{-1}\Lambda_i}$ is Galois-invariant. 
Let $L_i=g^{-1}\Lambda_i\cap F^{n+1}$, then the structure theorem over principal ideal domains implies that they are free of rank $n+1$ and there exists an $\roi_F$-basis $f_1, \cdots, f_{n+1}$ of $L_2$ such that $\vp^{t_1}f_1, \cdots, \vp^{t_{n+1}}f_{n+1}$ is an $\roi_F$-basis of $L_1$ for some $t_1\geq \cdots \geq t_{n+1}\geq 0$. Then $g^{-1}\Lambda_1=\vp^{t_1}f_1\roi_E+ \cdots + \vp^{t_{n+1}}f_{n+1}\roi_E$ and $g^{-1}\Lambda_2=f_1\roi_E + \cdots + f_{n+1}\roi_E$. 
Note that 
\[\roi_E/\vp^c \roi_E \cong \Lambda_2/\Lambda_1 \cong  g^{-1}\Lambda_2/g^{-1}\Lambda_1 = \oplus_i (\roi_E/\vp^{t_i}\roi_E),\]
so $t_1=c$ and $t_2=\cdots=t_{n+1}=0$. Replace $f_{n+1}$ by $\vp^c f_{n+1}$ and let $h=(f_1 \ \cdots \ f_{n+1}) \in GL_{n+1}(F)$ be the matrix whose columns are $f_i$ (with respect to the standard basis), we get $g^{-1}\Lambda_i=h \Lambda_i$ for $i=1,2$. Hence $gh \in \Stab_{GL_{n+1}(E)}(\Lambda_1) \cap \Stab_{GL_{n+1}(E)}(\Lambda_2) = \wtil{K}^{\prime, c}_{n+1}$.
 
If $r(g)\in K^{\prime, c}_S$, by the argument above there exists $h\in GL_{n+1}(F)$ such that $g_0=gh\in \wtil{K}^{\prime, c}_{n+1}$. 
Suppose that $g_0=\mat{A&b\\c&d}$ and $g_0^{-1}=\mat{A'&b'\\c'&d'}$, where $A, A'\in \Mat_{n}(E),\ b, b'\in E^n,\ c, c'\in E_n$ and $d, d'\in E$. Then $cb'+dd'=1$ and $r(g_0)=g_0 \bar{g}_0^{-1}\in K^{\prime, c}_S$, so $c\bar{b}'+d\bar{d}'\equiv 1 \mod \vp^c$. 
Since $b'\equiv 0 \mod \vp^c$, we have $dd' \equiv d\bar{d}' \equiv 1 \mod \vp^c$.
Suppose that $d'=d_1+\alpha d_2$ with $d_1, d_2\in \roi_F$ where $\alpha\in \roi_E^\times$ is such that $\alpha^2\in F$. The congruences above implies $d\in \roi_E^\times$ and $\vp^c| 2\alpha d d_2$, hence $\vp^c|d_2$ and $1\equiv dd_1 \mod \vp^c$.
Let $h'=\mat{I_n&\\&d_1}$, then $h'\in GL_{n+1}(F)$ and $g_0h' \in K^{\prime, c}_{n+1}$, i.e., $ghh'\in K^{\prime, c}_{n+1}$.

If $g\GL_{n+1}(F)\cap K^{\prime, c}_{n+1} \neq 0$, choose $h\in \GL_{n+1}(F)$ such that $g'=gh\in K^{\prime, c}_{n+1}$, then $r(g)=g\br{g}^{-1} = (gh)(\br{h}^{-1}\br{g}^{-1}) =g'\br{g'}^{-1} \in K^{\prime, c}_{n+1}\cap S_{n+1}(F)=K^{\prime, c}_S$.
\end{proof}

\subsection{Proof of \Cref{inhom_fl}}
\label{pf_inhom_fl}
We convert \Cref{inhom_fl} into the Lie algebra transfer statement \Cref{lie_alg_fl}, which we will prove using Jacquet--Rallis's fundamental lemma. 
Let \index{kct@$\tilde{\lk}_c$} \index{kctp@$\tilde{\lk}'_c$} $\tilde{\lk}_c=\wtil{\bm{K}}^c_{n+1} \cap \ul(V_0)(F)$ and $\tilde{\lk}'_c= \wtil{\bm{K}}^c_{n+1} \cap \s_{n+1}(F)$. 

\begin{lem}[Lie algebra version]
\label{lie_alg_fl}
With notations above, the pair $(\id_{\tilde{\lk}_c},0) \in C^\infty_c(\ul(V_0)(F)) \times C^\infty_c(\ul(V_1)(F))$ matches $\vol(K_n)\vol(\GL_n(\roi_F))^{-1} \id_{\tilde{\lk}'_c} \in C^\infty_c(\s_{n+1}(F))$.
\end{lem}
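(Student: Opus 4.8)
The plan is to deduce this statement from Jacquet--Rallis's fundamental lemma (\Cref{JR_fl}) by a change-of-lattice argument, exactly in the spirit of how the inhomogeneous and homogeneous versions are obtained further on in the paper. The point is that \Cref{JR_fl} concerns the \emph{standard} lattice $\ul(V_0)(\roi_F)$ built from the self-dual lattice $\Lambda_{n+1}=\roi_E^n\oplus \roi_E u_0$ with $\pa{u_0,u_0}=1$, whereas here $\tilde{\lk}_c$ is cut out by the congruence condition defining $\wtil{\bm K}^c_{n+1}$ inside $\ul(V_0)(F)$, and on the general linear side $\tilde{\lk}'_c$ is the analogous congruence subset of $\s_{n+1}(F)$. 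First I would make the orbital integrals of $\id_{\tilde{\lk}_c}$ and $\id_{\tilde{\lk}'_c}$ concrete: since the congruence in $\wtil{\bm K}^c_{n+1}$ only constrains the first $n$ columns modulo $\vp^c$ (the last column being forced to be integral with top block divisible by $\vp^c$ by the shape of elements of $\ul(V_0)$ and $\s_{n+1}$ — recall from \Cref{section_measures} that $y=\vp^c\bar z^t$), I expect $\tilde{\lk}_c$ and $\tilde{\lk}'_c$ to be describable as preimages under reduction maps of explicit subsets, allowing the matching of orbital integrals to be checked.

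The key steps, in order: (1) Identify $\tilde{\lk}_c$ with the set of $X\in\ul(V_0)(\roi_F)$ whose reduction mod $\vp^c$ has vanishing $(n{+}1,j)$-block structure as prescribed, and similarly for $\tilde{\lk}'_c$; note that the $H_i$-action (resp.\ $\GL_n(E)$-action) preserves the $(n{+}1,n{+}1)$-entry and, more relevantly, preserves the relevant column/row structure up to the group action, so that membership of a conjugate in $\tilde{\lk}_c$ depends only on the orbit in a controlled way. (2) Express $\id_{\tilde{\lk}_c}$ as an average of translates of $\id_{\ul(V_0)(\roi_F)}$ — or, more precisely, relate $\Orb(X,\id_{\tilde{\lk}_c})$ to $\Orb(X,\id_{\ul(V_0)(\roi_F)})$ by a volume factor coming from the index $[\ul(V_0)(\roi_F):\tilde{\lk}_c]$ together with the congruence cutting, exploiting that regular semisimple orbits meeting $\tilde{\lk}_c$ are those meeting $\ul(V_0)(\roi_F)$ with the additional mod-$\vp^c$ constraint. (3) Do the parallel computation on the $\s_{n+1}$ side, tracking the transfer factor $\omega(Y)=\tilde\eta_{E/F}(\det(e_0^*,e_0^*Y,\dots,e_0^*Y^n))$, which is $\equiv 1$ on $\tilde{\lk}'_c$ since there $Y$ is integral and the relevant determinant lies in $\roi_E^\times$. (4) Combine with \Cref{JR_fl} and bookkeep the constants: the factor $\vol(K_n)\vol(\GL_n(\roi_F))^{-1}$ should emerge precisely as the ratio of the normalizing volumes in \Cref{JR_fl} (namely $\vol(U(W_0)(\roi_F))^{-1}$ and $\vol(\GL_n(\roi_F))^{-1}$) adjusted by the volume ratio $\vol(\tilde{\lk}_c)/\vol(\ul(V_0)(\roi_F))$ versus $\vol(\tilde{\lk}'_c)/\vol(\s_{n+1}(\roi_F))$, using $K_n=U(W_0)(\roi_F)$. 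The vanishing of the $V_1$-component is inherited directly from \Cref{JR_fl} since $\tilde{\lk}_c\subset\ul(V_0)(\roi_F)$ forces any matching orbit to already be in the support handled there, where the $V_1$-side integral vanishes.

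The main obstacle I anticipate is step (2)–(3): showing cleanly that imposing the mod-$\vp^c$ congruence on the characteristic function translates, at the level of \emph{orbital} integrals of regular semisimple elements, into a uniform scalar multiple — i.e.\ that there is no orbit-dependent correction. This requires the observation that for $X\in\ul(V_0)(F)^\rs$ (resp.\ $Y\in\s_{n+1}(F)^\rs$), the conjugates landing in $\tilde{\lk}_c$ form a single coset-type family inside those landing in $\ul(V_0)(\roi_F)$, controlled by the stabilizer of the flag $\Lambda_{n+1}\subset\Lambda_{n+1}+\vp^{-c}(\text{last coordinate})$ — essentially a two-lattice stabilizer argument like the one used in the proof of \Cref{inhom_to_hom}. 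Once that structural fact is in place, the constant matches by the volume computations of \Cref{section_measures}, and the lemma follows.
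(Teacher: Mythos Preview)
Your plan has a genuine gap, rooted in a confusion about which hermitian space $V_0$ is in play. In the setup preceding the lemma, $V_0$ has $\pa{u_0,u_0}=\vp^c$, so the Lie algebra condition forces $y=-\vp^c\bar z^t$; hence every element of $\ul(V_0)(\roi_F)$ already satisfies the $\wtil{\bm K}^c_{n+1}$ congruence, and in fact $\tilde{\lk}_c=\ul(V_0)(\roi_F)$. Thus your step~(2) is vacuous---there is no index to extract a volume factor from. The real obstruction is that \Cref{JR_fl} is stated for the space with $\pa{u_0,u_0}=1$, not $\vp^c$, so it does not apply directly to $\id_{\ul(V_0)(\roi_F)}$. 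On the $\s_{n+1}$ side the situation is the opposite: $\tilde{\lk}'_c$ is a genuine proper subset of $\s_{n+1}(\roi_F)$, and your hoped-for relation $\Orb(Y,\id_{\tilde{\lk}'_c})=(\text{constant})\cdot\Orb(Y,\id_{\s_{n+1}(\roi_F)})$ is false in general---whether a conjugate $h^{-1}Yh$ lands in $\tilde{\lk}'_c$ depends on whether $h^{-1}b\in(\vp^c\roi_E)^n$, a condition that interacts with the integrality constraints on $h^{-1}Ah$ and $ch$ in an orbit-dependent way, not via a fixed coset index. Separately, your claim that $\omega(Y)\equiv 1$ on $\tilde{\lk}'_c$ is incorrect: integrality of $Y$ does not force $\det(e_0^*,e_0^*Y,\dots,e_0^*Y^n)\in\roi_E^\times$, only that it is integral.

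The paper's actual argument bypasses all of this with a single linear map $\iota:\begin{pmatrix}x&y\\z&w\end{pmatrix}\mapsto\begin{pmatrix}x&\vp^{-c}y\\z&w\end{pmatrix}$. This sends $\ul(V_0)$ (with $\pa{u_0,u_0}=\vp^c$) to $\ul(V'_0)$ (with $\pa{u'_0,u'_0}=1$), restricts to bijections $\tilde{\lk}_c\to\ul(V'_0)(\roi_F)$ and $\tilde{\lk}'_c\to\s_{n+1}(\roi_F)$, commutes with $U(W_0)$- and $\GL_n(F)$-conjugation (since these act as $y\mapsto h^{-1}y$, $z\mapsto zh$), preserves the matching relation, and leaves the transfer factor $\omega$ unchanged (via \Cref{trans_factor}, the relevant determinant only involves $A$ and $z$, not $y$). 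After transport by $\iota$, \Cref{JR_fl} applies verbatim and yields the stated constant $\vol(K_n)\vol(\GL_n(\roi_F))^{-1}$ with no further volume bookkeeping.
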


To prove it, we first state two technical results.
\begin{lem}
\label{caylay}
If $g\in \wtil{\bm{K}}^c_{n+1}, \xi\in E$, $\xi\bar{\xi}=1$ and $\det(g+\xi) \not\equiv 0 \md{\vp}$, then $\frac{g-\xi}{g+\xi} \in \wtil{\bm{K}}^c_{n+1}$.
\end{lem}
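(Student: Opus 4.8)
The plan is to unwind the explicit description of $\wtil{\bm{K}}^c_{n+1}$ and argue directly with block matrices modulo $\vp^c$. First I would dispose of the degenerate case $c=0$: there $\wtil{\bm{K}}^0_{n+1}=\Mat_{n+1}(\roi_E)$, and since $\xi\in\roi_E^\times$ (because $\xi\bar\xi=1$ and $E/F$ is unramified) while $\det(g+\xi)\not\equiv0\md\vp$ forces $\det(g+\xi)\in\roi_E^\times$, all of $g-\xi$, $g+\xi$, and $(g+\xi)^{-1}$ have entries in $\roi_E$, so the claim is immediate. Hence assume $c\geq1$.

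The identity I would exploit is $\cay_\xi^{-1}(g)=(g+\xi)^{-1}(g-\xi)=I_{n+1}-2\xi(g+\xi)^{-1}$. Since $p\neq2$ we have $2\xi\in\roi_E^\times$, and $(g+\xi)^{-1}\in\Mat_{n+1}(\roi_E)$ by the determinant hypothesis, so $\cay_\xi^{-1}(g)\in\Mat_{n+1}(\roi_E)$; by the description of $\wtil{\bm{K}}^c_{n+1}$ it therefore suffices to show that the upper-right $n\times1$ block of $(g+\xi)^{-1}$ lies in $\vp^c\roi_E^n$. Writing $g+\xi=\bigl(\begin{smallmatrix}P&Q\\R&S\end{smallmatrix}\bigr)$ in the $(n,1)$-block decomposition, we have $Q\in\vp^c\roi_E^n$ because $g\in\wtil{\bm{K}}^c_{n+1}$; reducing modulo $\vp$ — this is where $c\geq1$ is used — $g+\xi$ becomes block lower triangular, so $\det(g+\xi)\equiv\det(P)\cdot S\md\vp$, and the hypothesis $\det(g+\xi)\not\equiv0\md\vp$ then forces $P\in\GL_n(\roi_E)$.

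To conclude, write $(g+\xi)^{-1}=\bigl(\begin{smallmatrix}P'&Q'\\R'&S'\end{smallmatrix}\bigr)$ with entries in $\roi_E$. Comparing upper-right blocks in $(g+\xi)(g+\xi)^{-1}=I_{n+1}$ gives $PQ'+QS'=0$, hence $PQ'=-QS'\in\vp^c\roi_E^n$, and multiplying by $P^{-1}\in\Mat_n(\roi_E)$ yields $Q'\in\vp^c\roi_E^n$, which is exactly what is needed. The only step with any real content is the extraction of $P\in\GL_n(\roi_E)$ from $\det(g+\xi)\not\equiv0\md\vp$, which crucially uses that elements of $\wtil{\bm{K}}^c_{n+1}$ have top-right block divisible by $\vp^c$ (hence by $\vp$ once $c\geq1$), making $g+\xi$ block lower triangular mod $\vp$; I expect this to be the only point requiring care, the rest being routine block-matrix bookkeeping.
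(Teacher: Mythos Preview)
Your argument is correct, but the paper proceeds differently. Rather than a block-matrix computation, the paper observes that
\[
\wtil{\bm{K}}^c_{n+1}=\{x\in\Mat_{n+1}(E):\ x\Lambda_1\subset\Lambda_1\text{ and }x\Lambda_2\subset\Lambda_2\}
\]
for the two lattices $\Lambda_1=\roi_E^{n+1}$ and $\Lambda_2=\roi_E^n\oplus\pp_E^{-c}$. Since $g+\xi\in\wtil{\bm{K}}^c_{n+1}$ has unit determinant, it actually stabilizes both lattices, hence so does $(g+\xi)^{-1}$; as $g-\xi\in\wtil{\bm{K}}^c_{n+1}$ as well and $\wtil{\bm{K}}^c_{n+1}$ is closed under products, the result follows. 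This lattice viewpoint avoids your case split $c=0$ versus $c\geq1$ and the explicit extraction of $P\in\GL_n(\roi_E)$ from the determinant congruence; your approach, on the other hand, is entirely self-contained and makes visible exactly which block of the inverse needs to be controlled, which is a good sanity check. Both routes ultimately hinge on the same fact that $(g+\xi)^{-1}\in\wtil{\bm{K}}^c_{n+1}$.
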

  
\begin{proof}
Note that ${\wtil{\bm{K}}^c_{n+1} = \Mat_{n+1}(\roi_E) \cap \mat{\roi_E & \pp_E^c \\ \pp_E^{-c} & \roi_E}}$ and $g, g+\xi , g-\xi \in \wtil{\bm{K}}^c_{n+1}$. 
  
Let $\Lambda_1=\roi_E^{n+1}$ and $\Lambda_2=\roi_E^n+\pp_E^{-c}$, then $(g+\xi)\Lambda_i\subset \Lambda_i$ for $i=1,2$ and $\det (g+\xi)\in \roi_E^\times$ implies that $g+\xi \in \Stab(\Lambda_1)\cap \Stab(\Lambda_2)$ and hence $(g+\xi)^{-1} \in \Stab(\Lambda_1)\cap \Stab(\Lambda_2)$. Thus $\frac{g-\xi}{g+\xi}\Lambda_i \subset \Lambda_i$ for $i=1,2$, i.e., $\frac{g-\xi}{g+\xi} \in \wtil{\bm{K}}^c_{n+1}$.
\end{proof}
  
\begin{lem}
\label{trans_factor}
Suppose that $X=\mat{A & b\\ c& d}$ with $A\in \Mat_m(E), b\in E^m, c\in E_m, d\in E$ and \index{e0@$e_0$} $e_0=(0, \cdots, 0,1)^T \in E^{m+1}$, we have
  \[\det((X^ie_0)_{0\leq i\leq m})=(-1)^m \det((A^ib)_{0\leq i\leq m-1}).\]
\end{lem}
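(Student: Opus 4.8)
The plan is to evaluate the $(m+1)\times(m+1)$ determinant on the left by a single Laplace expansion, after recording how the iterates $X^i e_0$ decompose in block form. First I would write $X^i e_0 = \mat{v_i \\ w_i}$ with $v_i \in E^m$ and $w_i \in E$; since $X = \mat{A & b \\ c & d}$, one reads off the recursion $v_{i+1} = A v_i + w_i b$ and $w_{i+1} = c v_i + d w_i$, with $v_0 = 0$ and $w_0 = 1$. In particular $v_1 = b$, and by induction $v_i$ lies in the span of $b, Ab, \dots, A^{i-1}b$ with the coefficient of $A^{i-1}b$ equal to $1$: if $v_i = A^{i-1}b + (\text{a combination of } b,\dots,A^{i-2}b)$, then $A v_i = A^i b + (\text{a combination of } Ab,\dots,A^{i-1}b)$, and the extra term $w_i b$ only affects the $b$-coefficient, so $v_{i+1} = A^i b + (\text{a combination of } b,\dots,A^{i-1}b)$.

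Next I would exploit that the first column of the matrix $(e_0, Xe_0, \dots, X^m e_0)$ is $e_0 = (0,\dots,0,1)^T$, whose only nonzero entry sits in row $m+1$. Expanding the determinant along this column picks up the sign $(-1)^{(m+1)+1} = (-1)^m$ together with the minor obtained by deleting row $m+1$ and column $1$, which is precisely the $m\times m$ matrix with columns $v_1, \dots, v_m$. Hence $\det\bigl((X^i e_0)_{0\le i\le m}\bigr) = (-1)^m \det\bigl((v_i)_{1\le i\le m}\bigr)$.

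Finally I would compare $\det(v_1, \dots, v_m)$ with $\det\bigl((A^i b)_{0\le i\le m-1}\bigr)$. By the first step, expressing $v_1, \dots, v_m$ in the ordered system $b, Ab, \dots, A^{m-1}b$ produces a transition matrix that is upper triangular with $1$'s on the diagonal, hence of determinant $1$; so the two $m\times m$ determinants agree, and combining with the previous display gives the asserted identity. I do not expect any genuine obstacle here — it is elementary linear algebra — and the only things to be careful about are the sign bookkeeping in the cofactor expansion and the trivial edge case $m = 0$, where $X^0 e_0 = e_0$ is the $1\times 1$ matrix $(1)$ and the right-hand side is the empty determinant $1$.
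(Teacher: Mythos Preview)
Your argument is correct and essentially identical to the paper's: both write $X^ie_0=\begin{pmatrix}A^{i-1}b+b_i\\ d_i\end{pmatrix}$ with $b_i$ a combination of $b,\dots,A^{i-2}b$, expand along the first column to pick up the sign $(-1)^m$, and then observe that the $b_i$'s are combinations of earlier columns so can be removed without changing the determinant. The only difference is cosmetic --- you phrase the last step via an upper-triangular transition matrix, whereas the paper just subtracts columns.
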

  
\begin{proof}
Note that for $i>0$, straightforward induction shows that $X^ie_0=\mat{A^{i-1}b+b_i\\ d_i}$, where $b_i$ is a linear combination of $b, Ab, \cdots, A^{i-2}b$ and $d_i\in E$. Then 
\begin{align*}
\det((X^ie_0)_{0\leq i\leq m}) 
&= \det{\mat{
0 & b & Ab+b_1 & \cdots & A^{m-1}b+b_{m-1} \\
1 & d & d_1 & \cdots & d_{m-1}
}}\\ 
&= (-1)^m \det \mat{b & Ab+b_1 & \cdots & A^{m-1}b+b_{m-1}}\\
&= (-1)^m \det((A^ib)_{0\leq i\leq m-1}),
\end{align*}
where the last step is because every $b_i$ is a linear combination of previous columns.
\end{proof}
  
\begin{proof}[Proof of \Cref{lie_alg_fl}]
We reduce it to \Cref{JR_fl}.
For $i=0,1$, let $V'_i=W_i\oplus^\perp \pa{u_i'}$ with $\pa{u_i',u_i'}=1$.

We define a map $\iota: \ul(V_i) \to \ul(V'_i)$ by
\[\iota:\begin{blockarray}{ccc}
 & n &  1  \\
\begin{block}{c(cc)}
  n & x & y \\
  1 & z & w \\
\end{block}
\end{blockarray} \mapsto \mat{x & \vp^{-c}y \\ z & w}.\]
When $i=0$ this map restricts to a bijection $\tilde{\lk}_c\to \tilde{\lk}_0:=\ul(V'_0)(\roi_F)$.
Theorem 6.1 of \cite{Rallis-Schiffmann} states that $X=\mat{x&y\\z&w} \in \Mat_{n+1}(E)$ is regular semisimple if and only if $\{x, xy, \cdots, x^{n-1}y\}$ is a basis of $E^n$ and $\{z, zx, \cdots, zx^{n-1}\}$ is a basis of $E_n$.
This theorem implies that $X\in \ul(V_i)$ is regular semisimple if and only if $\iota(X)$ is regular semisimple. 
Moreover, for $X=\mat{x&y\\z&w}\in \ul(V_0)(F)^\rs$ we have
\begin{align*}
\Orb(X, \id_{\tilde{\lk}_c})
&= \int_{U(W_0)(F)} \id_{\tilde{\lk}_c}(h^{-1}Xh) dh\\
&= \int_{U(W_0)(F)} \id_{\tilde{\lk}_c}(\mat{h^{-1}xh & h^{-1}y \\ zh & w}) dh\\
&= \int_{U(W_0)(F)} \id_{\tilde{\lk}_0}(\iota(\mat{h^{-1}xh & h^{-1}y \\ zh & w})) dh\\
&= \int_{U(W_0)(F)} \id_{\tilde{\lk}_0}(\mat{h^{-1}xh & \vp^{-c}h^{-1}y \\ zh & w}) dh\\
&= \int_{U(W_0)(F)} \id_{\tilde{\lk}_0}(h^{-1}\iota(X)h) dh\\
&= \Orb(\iota(X), \id_{\tilde{\lk}_0}),
\end{align*}
where $\Orb(\iota(X), \id_{\tilde{\lk}_0})$ is the orbital integral on $\ul(V'_0)$ under the conjugation action by $U(W_0)$.

By abuse of notation, we denote the map 
\begin{align*}
\s_{n+1}(F) &\to \s_{n+1}(F)\\
\begin{blockarray}{ccc}
 & n &  1  \\
\begin{block}{c(cc)}
  n & x & y \\
  1 & z &  w\\
\end{block}
\end{blockarray} &\mapsto \mat{x & \vp^{-c}y \\ z & w}
\end{align*}
also by $\iota$. Again it is obvious that $\iota: \tilde{\lk}'_c \to \s_{n+1}(\roi_F)$ is a bijection and for $Y\in \s_{n+1}(\roi_F)^\rs$ we have
\[\Orb(Y,\id_{\tilde{\lk}_c})= \Orb(\iota(Y), \id_{\s_{n+1}(\roi_F)})\]
for the same reason as above. It is straightforward to check that $X\in \ul(V_i)(F)^\rs$ matches $Y\in \s_{n+1}(F)^\rs$ if and only if $\iota(X) \in \ul(V'_i)(F)^\rs$ matches $\iota(Y)\in \s_{n+1}(F)^\rs$. To summarize, the diagram 
\[\begin{matrix}
\s_{n+1}(F)^\rs/\GL_n(F) & = & \ul(V_0)(F)/U(W_0)(F) & \sqcup & \ul(V_1)(F)/U(W_1)(F)\\
\downarrow \iota & & \downarrow \iota  & & \downarrow \iota \\
\s_{n+1}(F)^\rs/\GL_n(F) & = & \ul(V'_0)(F)/U(W_0)(F) & \sqcup & \ul(V'_1)(F)/U(W_1)(F)
\end{matrix}\]
commutes under the matching condition.
Hence \Cref{JR_fl} implies that 
\begin{align*}
\omega(\iota(Y)) \Orb(Y,\id_{\tilde{\lk}_c})
&= \omega(\iota(Y))\Orb(\iota(Y), \id_{\s_{n+1}(\roi_F)})\\
&=\begin{cases}
  \vol(U(W_0)(\roi_F))^{-1}\vol(\GL_n(\roi_F)) \Orb(\iota(X), \id_{\tilde{\lk}_0}) & \text{ if }i=0,\\
  \Orb(\iota(X),0) & \text{ if }i=1
\end{cases}\\
&=\begin{cases}
    \vol(U(W_0)(\roi_F))^{-1}\vol(\GL_n(\roi_F)) \Orb(X, \id_{\tilde{\lk}_c})& \text{ if }i=0,\\
    \Orb(X,0) & \text{ if }i=1.
\end{cases}
\end{align*}
for matching $X\in \ul(V_i)(F)^\rs$ and $Y\in \s_{n+1}(F)^\rs$.
Note that for $Y=\mat{x & y\\z& w}$ we have $\iota(Y)=\mat{x & \vp^{-c}y\\z& w}$ and \Cref{trans_factor} implies that
\begin{align*}
\omega(\iota(Y))
&=\tilde{\eta}_{E/F}(\det(e_0\ \iota(Y)^Te_0\ \cdots \ (\iota(Y)^T)^ne_0))\\
&= \tilde{\eta}_{E/F}(\det(z\ x^Tz\ \cdots\ (x^T)^{n-1}z))
=\tilde{\eta}_{E/F} (\det(e_0\ Y^Te_0\ \cdots \ (Y^T)^ne_0))=\omega(Y).
\end{align*}

Hence $(\id_{\tilde{\lk}_c},0)$ matches $\vol(U(W_0)(\roi_F))\vol(\GL_n(\roi_F))^{-1} \id_{\tilde{\lk}'_c}$ with transfer factor $\omega$ as in the Jacquet-Rallis case. 
\end{proof}

\begin{proof}[proof of \Cref{inhom_fl}]
For $i=0,1$ and $g_i\in U(V_i)(F)^\rs$, choose $\xi\in E^\times$ such that $\xi\br{\xi}=1$ and $\det(g_i+\xi)\not\equiv 0 \mod \vp$. Note that there are $q_F+1$ elements in $\roi_E/\vp\roi_E$ with norm 1 and $\det(g_i+\xi)\not\equiv 0\mod \vp$ if and only if $-\xi$ is not an eigenvalue of $g_i$ module $\vp$.
Since we have assumed $q_F+1> n+1$ and $g_i$ has at most $n+1$ distinct eigenvalues, such $\xi$ has to exist.
For any $h_i\in U(W_i)(F)$, $\cay_\xi^{-1}(h_ig_ih_i^{-1})$ is in $\ul(V_i)(F)$ (see \Cref{section_notations}) and $\det(h_ig_ih_i^{-1}+\xi)\not\equiv 0 \mod \vp$.

Next we show that for $g\in U(V_0)(F)^\rs$ and $h\in U(W_0)(F)$, $hgh^{-1}\in \wtil{K}^c_{n+1}$ if and only if $\cay_\xi^{-1}(hgh^{-1})\in \tilde{\lk}_c$ .
If $g'=hgh^{-1}\in \wtil{K}^c_{n+1}$, then it is in $\wtil{\bm{K}}^c_{n+1}$ and $\cay_\xi^{-1}(g')= \frac{g'-\xi}{g'+\xi}\in \wtil{\bm{K}}^c_{n+1}$ by \Cref{caylay}, and thus $\cay_\xi^{-1}(g')\in \wtil{\bm{K}}^c_{n+1} \cap \ul(V_0)(F) = \tilde{\lk}_c$. 
On the other hand, if $X'=\cay_\xi^{-1}(g')\in \tilde{\lk}_c$, then $X'\in \wtil{\bm{K}}^c_{n+1}$ and the determinant of $X'-1=-\frac{2\xi}{g'+\xi}$ is in $\roi_E^\times$. 
Again by \Cref{caylay} we have $\frac{1+X'}{X'-1}\in \wtil{\bm{K}}^c_{n+1}$ and $g'=-\xi\frac{X'+1}{X'-1}\in \wtil{\bm{K}}^c_{n+1}\cap U(V_0)(F)=\wtil{K}^c_{n+1}$. 
Hence
\begin{align*}
\Orb(g,\id_{\wtil{K}^c_{n+1}}) &= \int_{U(W_0)(F)} \id_{\wtil{K}_{n+1}}(hgh^{-1}) dh\\
&=\int_{U(W_0)(F)} \id_{\tilde{\lk}_c}(\cay_\xi^{-1}(hgh^{-1})) dh\\
&=\int_{U(W_0)(F)} \id_{\tilde{\lk}_c}(h\cay_\xi^{-1}(g)h) dh\\
&=\Orb(\cay_\xi^{-1}(g), \id_{\tilde{\lk}_c}).
\end{align*}

It follows from definitions that $g_i \in U(V_i)(F)^\rs$ matches $s \in S_{n+1}(F)^\rs$ if and only if $\cay_\xi^{-1}(g_i)$ matches $\cay_\xi^{-1}(s)$.
Let $s\in S_{n+1}(F)^\rs$ be an element that matches $g_i \in U(V_i)(F)^\rs$ for $i=0$ or 1, then $\det(s+\xi)=\det(g_i+\xi)$ and for the same reason as above we have
\[\Orb(s,\id_{\wtil{K}^{\prime, c}_{n+1}})= \Orb(\cay_\xi^{-1}(s), \id_{\tilde{\lk}'_c}).\]
Since $\omega(\cay_\xi^{-1}(s))= \Omega(s)$ (see, for example, \cite{RSZ2} Lemma 11.9), \Cref{lie_alg_fl} implies that 
\begin{align*}
\Orb(g_0,\id_{\wtil{K}^c_{n+1}}) 
&= \Orb(\cay_\xi^{-1}(g_0), \id_{\tilde{\lk}_c}) \\
&=\vol(U(W_0)(\roi_F))\vol(\GL_n(\roi_F))^{-1} \omega(\cay_\xi^{-1}(s))\Orb(\cay_\xi^{-1}(s), \id_{\tilde{\lk}'_c}) \\
&= \vol(U(W_0)(\roi_F))\vol(\GL_n(\roi_F))^{-1} \Omega(s)\Orb(s,\id_{\wtil{K}^{\prime, c}_{n+1}})
\end{align*}
when $i=0$, and 
\[\Orb(g_1,0)=0=\Orb(\cay_\xi^{-1}(g_1),0)= \Orb(\cay_\xi^{-1}(s), \id_{\tilde{\lk}_c}) = \Orb(s, \id_{\wtil{K}'_{n+1}})\]
when $i=1$. Hence the lemma is proved.
\end{proof}

\section{Calculation of local characters}
\label{section_calc_I}
Notations in \Cref{base_change} and \Cref{section_whittaker} are independent of the conventions in previous sections. 

\subsection{Local Langlands correspondence and base change}
\label{base_change}
Let $E/F$ be an unramified quadratic extension of $p$-adic local fields.
For a connected reductive group $G$ over $E$, let $\Pi_\temp(G)$ denote the set of equivalence classes (under conjugation by $\widehat{G}$) of tempered irreducible representations of $G$ and let $\Phi_\temp(G)$ be the set of equivalence classes of tempered $L$-parameters of $G$. 
Let $W(E)$ denote the Weil group of $E$ and let $WD(E)=W(E) \times SL_2(\C)$ denote the Weil-Deligne group of $E$, and similarly define $W(F)$ and $WD(F)$. 
For an $L$-parameter $\varphi: WD(E) \to {}^L G=\widehat{G} \times \Gal(E)$ of $G(E)$, let $C_\varphi$ be the centralizer of the image of $\varphi$ in $\widehat{G}$ and denote by $A_\varphi=\pi_0(C_\varphi)$ the component group of $C_\varphi$.
Representations of $WD(F)$ and $WD(E)$ always refer to admissible ones.

The work of Mok \cite[Theorem 3.2.1]{Mok} and Kaletha, Minguez, Shin, and White \cite[Theorem 1.6.1]{KMSW} established the local Langlands correspondence for unitary groups, a part of which we state in \Cref{LLC} below.
Let $V^+$ (resp. $V^-$) be an $n$-dimensional hermitian space over $E$ with discriminant 1 (resp. $-1$), then the $L$-groups of $U(V^+)$ and $U(V^-)$ are isomorphic. 
We denote $\Phi_\temp(U(V^+))=\Phi_\temp(U(V^-))$ by $\Phi_\temp(U_n)$. 

\begin{thm}
\label{LLC}
To every tempered $L$-parameter $\varphi \in \Phi_\temp(U_n)$ we can associate canonically a finite set $\Pi_\varphi \subset \Pi_\temp(U(V^+)) \sqcup \Pi_\temp(U(V^-))$ such that 
\[\Pi_\temp(U(V^+)) \sqcup \Pi_\temp(U(V^-))=\underset{\varphi\in \Phi_\temp(U_n)}{\sqcup} \Pi_\varphi\]
and these sets are classified by endoscopic relations. The $\Pi_\varphi$'s are called Vogan $L$-packets.

Moreover, for every $\varphi \in \Phi_\temp(U_n)$ there is a bijection
\begin{equation*}
J_\psi: \Pi_\varphi \to \hom(A_\varphi,\C^\times).
\end{equation*}
that depends on the choice of a $\text{Nm}_{E/F}(E^\times)$-orbit of non-trivial additive characters $\psi: E/F \to \C^\times$.
\end{thm}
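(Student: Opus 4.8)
\Cref{LLC} is not established from scratch here: it is the local Langlands correspondence for unitary groups, proved by Mok \cite{Mok} for quasi-split unitary groups and extended to their pure inner forms by Kaletha, Minguez, Shin, and White \cite{KMSW}, and in this paper it is used as a black box. Let me nevertheless sketch the shape of the argument carried out in those references.

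The plan is to transport everything to $\GL_n(E)$ by twisted endoscopy for the base-change automorphism $\theta$. Via the local Langlands correspondence for general linear groups, a tempered parameter $\varphi\in\Phi_\temp(U_n)$ corresponds to a conjugate-self-dual (of the parity appropriate to $U_n$) tempered representation $\Pi$ of $\GL_n(E)$, and this $\Pi$ is $\theta$-stable; one then \emph{defines} the packet $\Pi_\varphi$ to be the set of tempered representations of $U(V^+)(F)\sqcup U(V^-)(F)$ whose stable base change is $\Pi$, equivalently whose sum of characters transfers to the $\theta$-twisted character of $\Pi$. To show that this set is finite, nonempty, and that the $\Pi_\varphi$ partition $\Pi_\temp(U(V^+))\sqcup\Pi_\temp(U(V^-))$ as claimed, one compares (twisted) Arthur--Selberg trace formulas: the stabilization of the twisted trace formula for $\GL_n\rtimes\theta$, together with the twisted fundamental lemma and the existence of smooth transfer, lets one realize the desired local distributions on $U(V^\pm)(F)$ as local components of global automorphic characters, and the disjointness and exhaustiveness are then inherited from the corresponding global statements.

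The internal bijection $J_\psi\colon\Pi_\varphi\to\hom(A_\varphi,\C^\times)$ is obtained by fixing a Whittaker datum for the quasi-split form $U(V^+)$ — which is precisely what makes the normalization depend on the $\mathrm{Nm}_{E/F}(E^\times)$-orbit of $\psi$ — using it to pin down transfer factors, and reading the character of $A_\varphi$ attached to each member off the resulting endoscopic character identities, with the $\psi$-generic constituent corresponding to the trivial character. The real difficulty, entirely resolved in \cite{Mok,KMSW} and the works they rely on, is the global one: the stabilization of the twisted trace formula (Moeglin--Waldspurger) and the controlled passage from quasi-split unitary groups to their pure inner forms, keeping track of all normalizations of local and global transfer factors. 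For the purposes of the present paper we simply invoke \Cref{LLC}.
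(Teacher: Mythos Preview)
Your reading is correct: the paper does not prove \Cref{LLC} at all but simply quotes it from \cite[Theorem 3.2.1]{Mok} and \cite[Theorem 1.6.1]{KMSW}, exactly as you say in your first paragraph. Your additional sketch of the trace-formula argument behind those references is accurate in outline and goes beyond what the paper offers, but for the purposes of matching the paper's own treatment, your opening sentence already suffices.
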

 
Fix $s\in W(F)\setminus W(E)$. Let $M$ be a representation of $WD(E)$. We say that $M$ is conjugate-dual of sign 1 or conjugate-orthogonal if there exists a nondegenerate bilinear form $B: M \times M \to \C$ such that 
\[B(\tau m_1, s \tau s^{-1} m_2)=B(m_1, m_2) \quad \text{and} \quad B(m_2, m_1)=B(m_1, s^2 m_2) \quad \text{for all }\tau \in WD(E).\]
We say that $M$ is conjugate-dual of sign $-1$ or conjugate-symplectic if there exists a nondegenerate bilinear form $B: M \times M \to \C$ such that 
\[B(\tau m_1, s \tau s^{-1} m_2)=B(m_1, m_2) \quad \text{and} \quad B(m_2, m_1)=-B(m_1, s^2 m_2) \quad \text{for all }\tau \in WD(E).\]
The notions of conjugate-orthogonal and conjugate-symplectic are independent of the choice of $s$.

Every $L$-parameter $\varphi \in \Phi_\temp(U_n)$ can be regarded as a sign $(-1)^{n+1}$ conjugate-dual representation $M$ of $WD(E)$ (see \cite[Section 8]{GGP}), and below we will regard $L$-parameters of $U(V^\pm)$ as conjugate-dual representations of $WD(E)$. 
The local Langlands correspondence of general linear groups associates a representation $\Pi(\varphi)$ of $\GL_n(E)$ to $\varphi$. 
For any $\pi$ in the Vogan $L$-packet $\Pi_\varphi$, we call $\Pi(\varphi)$ the base change of $\pi$ and write \index{BC@$BC$} $BC(\pi):=\Pi(\varphi)$.  

For an irreducible admissible representation $\Pi$ of $\GL_n(E)$ and a non-trivial additive character $\psi$ of $E$ with conductor $\roi_E$, Godement and Jacquet \cite{G-J} defined the local factor $\epsilon(s,\Pi,\psi)$, which is of the form 
\[\epsilon(s,\Pi,\psi)=\epsilon_\Pi(q_E^{-s},\psi),\]
where $\epsilon_\Pi(X,\psi)=c_0X^m$ for some $c_0\in \C$ and $m\geq 0$. The integer $m$ does not depend on the choice of $\psi$ and is called the conductor of $\Pi$.
For a representation $\pi\in \Pi_\varphi$ with $\varphi \in \Phi_\temp(U_n)$, we define the conductor of $\pi$ to be the conductor of $BC(\pi)$. Sometimes we will call it the conductor of $\varphi$ or $\Pi_\varphi$ since it does not depend on the choice of $\pi \in \Pi_\varphi$.

\subsubsection*{Asai L-function}
Let $M$ be an $m$-dimensional vector space over $\C$, Gan, Gross, and Prasad defined and characterized in \cite[Chapter 7]{GGP} two representations $As^\pm(M)$ of $H^1={}^L \Res_{E/F}\GL_m= \GL(M) \times \GL(M) \rtimes W_F$ as follows. Let Std denote the standard representation of $\GL(M)$ on $M$ and let $H^0= \GL(M) \times \GL(M) \rtimes W_E$. We view the representation $\text{Std} \otimes \text{Std}$ of $\GL(M)\times \GL(M)$ on $M \otimes M$ as a representation of $H^0$ by letting $W_E$ act trivially. Then \index{Asai@$As^+, As^-$}
\[\Ind_{H^0}^{H^1}\text{Std} \otimes \text{Std}= As^+(M) \oplus As^-(M)\]
for some representations $As^+(M), As^-(M)$ satisfying
\[\tr(w|As^\pm)=\pm m\]
for $w\in W_F \setminus W_E$. 
The representations $As^\pm$ satisfy the relation 
\[L(s,\pi, As^-)=L(s,\pi\otimes \tilde{\eta}_{E/F}, As^+)\]
for representations $\pi$ of $\GL_m(E)$. On the other hand, Flicker \cite{Flicker} defined the Asai $L$-function of representations of $\GL_n(E)$ using Rankin--Selberg integrals and showed that under this definition, the Asai $L$-function of an unramified generic unitary representation $\pi$ of $\GL_n(E)$ with Satake parameters $\alpha_1, \cdots, \alpha_n$ is 
\[L_{As}(s,\pi)=\prod_{1\leq i\leq n}(1-q_F^{-s}\alpha_i)^{-1} \prod_{1\leq j<k\leq n} (1-q_F^{-2s}\alpha_j\alpha_k)^{-1}.\]

Matringe \cite[Theorem 4.3]{matringe_asai} proved that for generic representations $\pi$ of $\GL_n(E)$ the two definitions coincide:
\[L_{As}(s,\pi)=L(s,\pi, As^+).\]
Consequently, if $\pi$ is unramified with Satake parameters $\alpha_1, \cdots, \alpha_n$, then 
\[L(s,As^-)=\prod_{1\leq i\leq n}(1+q_F^{-s}\alpha_i)^{-1} \prod_{1\leq j<k\leq n} (1-q_F^{-2s}\alpha_j\alpha_k)^{-1}.\]

\subsection{Essential Whittaker function}
\label{section_whittaker}
Let $E$ be a nonarchimedean local field with residue field $\F_{q_E}$.
Let $G_n=\GL_n(E)$ and let $\nu$ be the valuation map on $E^\times$ defined in \Cref{section_notations}. For $m=tl$ and a cuspidal representation $\rho$ of $G_l$, the representation \index{1segment@$[\nu^{-(t-1)}\rho, \rho]$} 
$[\nu^{-(t-1)}\rho,\rho]:=\nu^{-(t-1)}\rho \times \cdots \times \nu^{-1}\rho \times \rho$ 
of $M=G_l \times \cdots \times G_l$ can be regarded as a representation of the standard parabolic group $P \subset G_m$ with Levi component $M$. The normalized (i.e. twisted by $\delta_p^{1/2}$) induced representation 
$\Ind_{P}^{G_m}[\nu^{-(t-1)}\rho,\rho]$
of $G_m$ has a unique irreducible quotient, which we again denote by $[\nu^{-(t-1)}\rho, \rho]$. Representations of this form are called segments and sometimes denoted by \index{Delta@$\Delta$} $\Delta$.
Recall that all generic admissible representations of $G_m$ are of the form $\Ind_{P}^{G_m} (\Delta_1 \times \cdots \times \Delta_r)$, 
where $P$ is the standard parabolic subgroup of $G_m$ with Levi component $G_{n_1} \times \cdots \times G_{n_r}$ for some $n_1, \cdots, n_r$ that sum up to $m$ and $\Delta_1, \cdots, \Delta_r$ segments of $\GL_{n_1}, \cdots, \GL_{n_r}$ respectively \cite{Zelevinsky}. 
For simplicity, we denote this representation of $G_m$ by $\Delta_1 \times \cdots \times \Delta_r$.
In particular, unramified representations are of the form $\Ind_{B_m}^{G_m} \chi$, where \index{Bm@$B_m$} $\chi=(\chi_1,\cdots, \chi_m)$ and $\chi_i$ are unramified characters of $E^\times$ and $B_m\subset G_m$ is the Borel subgroup of upper triangular matrices.

For an unramified representation $\Pi=\Ind_{B_m}^{G_m} \chi$ of $G_m$, the spherical Whittaker function is of the following form (see, for example, \cite{B-G_spherical-whit-fcn}).
Let \index{alpha@$\alpha, \alpha_i$} $\alpha_i=\chi_i(\vp)$, and for $\lambda=(\lambda_1,\cdots, \lambda_m) \in \C^m$ write \index{pilambda@$\vp^\lambda$} $\vp^\lambda=\diag(\vp^{\lambda_1}, \cdots, \vp^{\lambda_m})$, then the normalized spherical Whittaker function $W^0_\Pi$ is defined by
\begin{equation}
\label{sph_whit}
W_\Pi^0(\vp^\lambda)=\begin{cases}
    \delta_m(\vp^\lambda)^{1/2} s_\lambda(\alpha)& \text{ if }\lambda_1 \geq \cdots \geq \lambda_m, \\
    0& \text{ otherwise},
\end{cases}
\end{equation}
where \index{slambda@$s_\lambda(\alpha)$}
\[s_\lambda(\alpha)=\frac{\det\ (\alpha_i^{\lambda_j+n-j})_{1\leq i,j\leq m}}{\det\ (\alpha_i^{n-j})_{1\leq i,j\leq m}}\]
and \index{\deltam@$\delta_m$} $\delta_m$ is the modular character of $B_m$.

Let $\psi:E \to \C^\times$ be an additive character with conductor $\roi_E$. Jacquet, Piatetski-Shapiro, and Shalika \cite{J-PS-S} proved the following result.
\begin{thm}
\label{essential_whit_fcn}
Let $\Pi_{n+1}$ be a generic representation of $GL_{n+1}(E)$ with conductor $c$ and Whittaker model $\W(\Pi_{n+1},\psi)$. 
There exists a unique element $W\in \W(\Pi_{n+1},\psi)$ such that
\begin{enumerate}[label=(\alph*)]
\item $W(gh)=W(g)$ for all $h\in GL_n(\roi_E)$, and
\item for any unramified representation $\Pi_n$ of $GL_n(E)$ with Satake parameter $\{q_E^{-a_1},\cdots, q_E^{-a_n}\}$, let $W^0\in \W(\Pi_n, \br{\psi})$ be the normalized spherical function (so that $W^0(1)=1$), then 
\[\lambda(W,W^0)=L(\frac{1}{2}, \Pi_{n} \times \Pi_{n+1})\]
with Rankin--Selberg $L$-function \cite{G-J} on the right hand side. 
\end{enumerate}
Moreover, $\Pi_{n+1}^{K^{\prime, c}_{n+1}}$ is a 1-dimensional space spanned by $W$, where $K^{\prime, c}_{n+1}$ is as defined in \Cref{section_measures}. We call the element $W^\ess\in \Pi_{n+1}^{K^{\prime, c}_{n+1}}$ with $W^\ess(1)=1$ the (normalized) essential Whittaker function and the elements in $\Pi_{n+1}^{K^{\prime, c}_{n+1}}$ are called newforms.
\end{thm}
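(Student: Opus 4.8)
This is exactly the theory of the local newform, or essential Whittaker vector, of Jacquet, Piatetski-Shapiro and Shalika, so the plan is to deduce it from \cite{J-PS-S} after reconciling conventions rather than to reprove it from scratch. First I would recall their theorem in standard form: for a generic irreducible representation $\Pi$ of $\GL_{n+1}(E)$ of conductor $c$ and $K_1(\pp_E^m)\subset\GL_{n+1}(\roi_E)$ the congruence subgroup whose bottom row is $\equiv(0,\dots,0,1)\md{\pp_E^m}$, one has $\W(\Pi,\psi)^{K_1(\pp_E^m)}=0$ for $m<c$ while $\dim\W(\Pi,\psi)^{K_1(\pp_E^c)}=1$, and a generator $W^\ess$ normalized by $W^\ess(1)=1$ is characterized by being right-invariant under the upper-left $\GL_n(\roi_E)$ together with the identity $\Psi(s,W^\ess,W^0_{\Pi_n})=L(s,\Pi_n\times\Pi)$ for every unramified generic $\Pi_n$ with normalized spherical Whittaker function $W^0_{\Pi_n}$, where $\Psi$ is the $\GL_n\times\GL_{n+1}$ Rankin--Selberg integral. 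At the center point $\Psi$ becomes the pairing $\lambda$ of \Cref{section_orb_int}, which gives property (b). The vanishing of the fixed spaces for $m<c$ and their one-dimensionality at $m=c$ is the substantive input from \cite{J-PS-S} --- it rests ultimately on the $\epsilon$-factor and functional-equation characterization of the conductor --- and I would import it as a black box.

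Next I would reconcile $K^{\prime, c}_{n+1}$ with $K_1(\pp_E^c)$. The group $\bm{K}^c_{n+1}$ is cut out by a congruence on the last \emph{column} rather than the last row, so it is, up to a transpose and a compensating conjugation by a Weyl element and a diagonal matrix, the newform subgroup of \cite{J-PS-S}; one checks directly that $\{\mat{h&\\&1}:h\in\GL_n(\roi_E)\}\subset K^{\prime, c}_{n+1}$, so that condition (a) is automatic for any $K^{\prime, c}_{n+1}$-fixed vector, and that $\psi_E$ has conductor $\roi_E$ as \cite{J-PS-S} require. Matching the Rankin--Selberg integral $\lambda$ with the corresponding one on the standard side then transports JPSS's theorem to the form stated here, so that $\Pi_{n+1}^{K^{\prime, c}_{n+1}}$ is one-dimensional and spanned by the essential vector.

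For the uniqueness of a vector satisfying (a) and (b) I would argue directly, and this part needs only elementary input. Normalize $W^\ess$ by $W^\ess(1)=1$; then (b) holds with exactly the $L$-value on the right by the previous step. If $W$ is another solution, put $W_0=W-W^\ess$; it is right-$\GL_n(\roi_E)$-invariant and $\lambda(W_0,W^0_{\Pi_n})=0$ for every unramified $\Pi_n$. Because $W_0|_{\GL_n}$ is left $(N_n,\psi_E)$-equivariant and right $\GL_n(\roi_E)$-invariant and $\psi_E$ has conductor $\roi_E$, the value $W_0(\vp^\lambda)$ vanishes unless $\lambda$ is dominant; on the dominant cone $\lambda(W_0,W^0_{\Pi_n})$ equals, up to the positive modular and volume factors visible in \Cref{sph_whit}, the pairing of $(W_0(\vp^\lambda))_\lambda$ against the Schur functions $s_\lambda(\alpha)$, and linear independence of the $s_\lambda$ (integrate against conjugate Schur functions over the compact torus; convergence is automatic when $\Pi_{n+1}$ is tempered, and in general one restores the variable $s$ and uses that $\Psi(s,\cdot,\cdot)$ is a rational function of $q_E^{-s}$) forces $W_0|_{\GL_n}=0$. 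Conjugating the unipotents $\mat{1&v\\&1}$, $v\in E^n$, past $\mat{g&\\&1}$ then shows $W_0$ vanishes on the whole mirabolic of $\GL_{n+1}$, hence $W_0=0$ by injectivity of the Kirillov model.

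Since the analytic substance --- the conductor theory and the Rankin--Selberg identity --- is imported wholesale from \cite{J-PS-S}, the real work is the bookkeeping of the second paragraph: verifying that the ``last column'' subgroup used here is genuinely the newform subgroup attached to the Whittaker datum $\psi_E$ and to the embedding $\GL_n\hookrightarrow\GL_{n+1}$ entering $\lambda$, and that passing between the two conventions introduces no stray twist or constant --- in particular that one recovers $\Pi_{n+1}$ itself and not its contragredient. That is the step where I expect the care to be required; the convergence in the uniqueness argument is a minor further point.
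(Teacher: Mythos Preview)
Your proposal is correct and matches the paper's approach: the paper does not prove this theorem but cites \cite{J-PS-S}, adding only the remark that their subgroup is the transpose ${}^tK^{\prime,c}_{n+1}$ and that $\dim\Pi_{n+1}^{K^{\prime,c}_{n+1}}=\dim\hat{\Pi}_{n+1}^{{}^tK^{\prime,c}_{n+1}}=1$ follows because $\Pi_{n+1}$ and its contragredient have the same conductor---precisely the contragredient issue you flag at the end.
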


\begin{rmk} 
Note that \cite[Theorem 5.1]{J-PS-S} is stated slightly differently from above. Let ${}^tK_{n+1}^{\prime, c}$ denote the group consisting of transposes of elements in $K_{n+1}^{\prime, c}$, \cite{J-PS-S} shows that $\Pi_{n+1}^{{}^t K^{\prime, c}_{n+1}}$ is one dimensional. However, the conductor of $\Pi_{n+1}$ is the same as the conductor of its contragredient representation $\hat{\Pi}_{n+1}$, hence $\dim \Pi_{n+1}^{K_{n+1}^{\prime, c}} = \dim \hat{\Pi}_{n+1}^{{}^t K_{n+1}^{\prime, c}}=1$. 
\end{rmk}

Matringe \cite{Matringe} proved that the essential Whittaker functions are of the following explicit form.

\begin{defn}
\label{pi_unram}
Let $\pi=\Delta_1 \times \cdots \times \Delta_t$ be a generic representation of $\GL_m(E)$, with $\Delta_i=\left[\nu^{-\left(k_i-1\right)} \rho_i, \rho_i\right]$. Let $r$ be the number of $\rho_i$'s that are unramified characters of $E^\times$. 
When $r>0$, we call these characters $\chi_1, \ldots, \chi_r$ and order them so that $\operatorname{Re}\left(\chi_i\right) \geq \operatorname{Re}\left(\chi_{i+1}\right)$ for $1 \leq i \leq r-1$, where \index{rechi@$\re(\chi)$} $\operatorname{Re}\left(\chi_i\right)$ means $\operatorname{Re}(t_i)$ for $\chi_i=|\cdot |^{t_i}$. 
We define $\pi_u$ to be the trivial representation of $G_0$ when $r=0$ and define it to be the unramified representation $\chi_1 \times \cdots \times \chi_r$ of $G_r$ when $r \geq 1$.
\end{defn}

\begin{thm}[\cite{Matringe}, Corollary 3.2]
\label{thm_ess_Whit}
For $m \geq 2$, let $\pi$ be a ramified generic representation of $\GL_m(E)$ and let $r$ be as in \Cref{pi_unram} above. Then $r<m$ and
\begin{align*}
&W^{\text{ess}}_\pi(\mat{\vp^{f_1}&&&\\ &\ddots&&\\ &&\vp^{f_{m-1}}&\\ &&&1})\\
&=\begin{cases}
W_{\pi_u}^0(\mat{\vp^{f_1}&&\\&\ddots&\\&&\vp^{f_r}})|\vp^{f_1+\cdots+f_r}|_E^{(m-r)/2} & \text{ if }f_1\geq \cdots \geq f_r\geq 0= f_{r+1}= \cdots = f_{m-1},\\
0 & \text{ otherwise}.
\end{cases}
\end{align*}
\end{thm}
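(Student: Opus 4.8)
The plan is to pin down $W^{\text{ess}}_\pi$ using the characterization already recorded in \Cref{essential_whit_fcn}: up to scalar it is the unique right $\GL_{m-1}(\roi_E)$-invariant vector in $\W(\pi,\psi)$, and among such vectors it is singled out by the Rankin--Selberg identity $\lambda\bigl(W^{\text{ess}}_\pi,W^0_{\Pi_n}\bigr)=L\bigl(\tfrac{1}{2},\Pi_n\times\pi\bigr)$ for every unramified $\Pi_n$ of $\GL_{m-1}(E)$. Because $W^{\text{ess}}_\pi$ is right $\GL_{m-1}(\roi_E)$-invariant, the Iwasawa decomposition of $\GL_{m-1}(E)$ together with $N_{m-1}$-equivariance shows that the function $g\mapsto W^{\text{ess}}_\pi(\diag(g,1))$ on $\GL_{m-1}(E)$ --- in particular its values on the diagonal torus, which the theorem computes --- is determined by the family of Rankin--Selberg integrals against spherical Whittaker functions. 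So it is enough to extract these torus values from the Rankin--Selberg identities.

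Before that I would record the elementary observation that $r<m$: if every $\rho_i$ were an unramified character, each segment $\Delta_i$ would be a single character and $\pi=\chi_1\times\cdots\times\chi_m$ would be unramified, contrary to hypothesis. Now, substituting the explicit spherical Whittaker function \eqref{sph_whit} into $\lambda\bigl(W^{\text{ess}}_\pi,W^0_{\Pi_n}\bigr)$ and using Iwasawa, the integral collapses to a sum $\sum_f d_f\,s_f(\alpha_1,\dots,\alpha_{m-1})$ over weakly decreasing $f\in\Z^{m-1}$, where the $\alpha_i$ are the Satake parameters of $\Pi_n$, $s_f$ is the (Laurent) Schur function, and $d_f=\delta_{m-1}(\vp^f)^{-1/2}\,W^{\text{ess}}_\pi(\diag(\vp^f,1))$. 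On the other side, multiplicativity of Rankin--Selberg $L$-factors gives $L(\tfrac{1}{2},\Pi_n\times\pi)=\prod_i L(\tfrac{1}{2},\Pi_n\times\Delta_i)$; the factor attached to a segment built on a ramified cuspidal representation (or a ramified character) is trivial against the unramified $\Pi_n$, while the factor attached to an unramified segment $[\nu^{-(k_i-1)}\chi_i,\chi_i]$ is carried entirely by its top character $\chi_i$, so $L(\tfrac{1}{2},\Pi_n\times\pi)=L(\tfrac{1}{2},\Pi_n\times\pi_u)=\prod_{i=1}^{m-1}\prod_{j=1}^{r}\bigl(1-\alpha_i\chi_j(\vp)q_E^{-1/2}\bigr)^{-1}$. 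By the Cauchy identity the last product equals $\sum_{\mu}s_\mu(\alpha_1,\dots,\alpha_{m-1})\,s_\mu\bigl(q_E^{-1/2}\chi_1(\vp),\dots,q_E^{-1/2}\chi_r(\vp)\bigr)$, summed over partitions $\mu$ with at most $r$ parts. Comparing the two Schur expansions and using linear independence of the $s_f$, I conclude that $d_f=0$ unless $f$ is a partition with at most $r$ parts, in which case $d_f=q_E^{-|f|/2}s_f(\chi_1(\vp),\dots,\chi_r(\vp))$; unwinding the definition of $d_f$ and the ratio $\delta_{m-1}\bigl(\vp^{(f,0,\dots,0)}\bigr)^{1/2}/\delta_r(\vp^f)^{1/2}=q_E^{-|f|(m-1-r)/2}$ turns this into exactly the formula in the theorem, the twist $|\vp^{f_1+\cdots+f_r}|_E^{(m-r)/2}$ absorbing the discrepancy of modular characters together with the extra $q_E^{-1/2}$ that realizes the value at $s=\tfrac{1}{2}$.

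As an alternative and a cross-check on the combinatorics, one can instead induct on the number of segments: a ramified segment has essential vector supported only at the identity of the torus (seen in the Kirillov model), an unramified segment $[\nu^{-(k-1)}\chi,\chi]$ has essential vector read off from the known Whittaker function of the twisted Steinberg representation, and for $\pi_1\times\pi_2$ one constructs the essential vector from those of $\pi_1,\pi_2$ via the Jacquet integral for the Whittaker functional on the induced model; iterating, the ramified segments contribute no torus support and the unramified ones assemble into the spherical Whittaker function of $\pi_u$ with the size-discrepancy twist.

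The main obstacle I anticipate is the computation of the Rankin--Selberg factor $L(s,\Pi_n\times\pi)$ in terms of $\pi_u$: knowing that a segment on a ramified cuspidal contributes a trivial factor against an unramified representation, and that an unramified segment contributes only through its top character. These use the Bernstein--Zelevinsky classification of local factors (through \cite{Zelevinsky} and \cite{J-PS-S}) rather than any formal identity, and are where the structure theory genuinely enters; the remaining steps --- the Iwasawa collapse of the integral, convergence of the resulting generating series and its identification with the $L$-factor as a rational function, the Cauchy identity, and the bookkeeping with modular characters --- are routine, as one expects for a corollary of Matringe's main computation.
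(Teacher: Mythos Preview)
The paper does not prove this statement: it is quoted as Corollary~3.2 of \cite{Matringe} and used as a black box in the computation of \Cref{lem_beta}. So there is no proof in the paper to compare your proposal against.

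That said, your sketch is correct. The substantive input, as you correctly isolate, is the equality $L(s,\Pi_n\times\pi)=L(s,\Pi_n\times\pi_u)$ for unramified $\Pi_n$, which follows from multiplicativity of Rankin--Selberg factors together with the known pair factors for a segment against a character (only the top character of an unramified segment contributes; a segment on a ramified or higher-dimensional cuspidal contributes $1$). After that the Cauchy identity and the uniqueness in \Cref{essential_whit_fcn} pin down the torus values. One point to tighten: your ``linear independence of the $s_f$'' step should be carried out with the auxiliary variable $s$ retained in $\lambda(s,W^{\mathrm{ess}},W^0)$, working in the half-plane of absolute convergence so that both sides admit unique expansions in the character basis of the diagonal torus of $\GL_{m-1}$; as written, the sum over all dominant $f\in\Z^{m-1}$ needs this to be made rigorous.

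For comparison, Matringe's own argument runs in the direction of your ``alternative'': he builds the essential vector explicitly in the induced model from the essential vectors of the inducing segments, computes its Whittaker function on the torus directly, and then verifies the Rankin--Selberg identity. Your primary route reverse-engineers the torus values from that identity and the uniqueness clause of \Cref{essential_whit_fcn}. Both are valid; Matringe's has the advantage of exhibiting the vector concretely, while yours makes transparent why only $\pi_u$ enters the answer.
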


\subsection{Proof of \Cref{thm:mainthm}}
\label{section_integral}
Let $\sigma_n$ and $\sigma_{n+1}$ be irreducible tempered unitary representations of $\GL_n(E)$ and $\GL_{n+1}(E)$ respectively, satisfying that $\sigma_n$ is unramified and $\sigma_{n+1}$ has conductor $c$. 
Let $\sigma=\sigma_n \boxtimes \sigma_{n+1}$ and let $\sigma_u$ be the unramified representation derived from $\sigma_{n+1}$ as in \Cref{pi_unram}.
\index{W@$\W_n, \W_{n+1}$}
Let $\psi:F\to \C^\times$ be a nontrivial additive character of $F$ with conductor $\roi_F$ and define $\psi_E: E\to \C^\times$ by $\psi_E(a)=\psi(\frac{1}{2} \tr_{E/F}(a))$.  
Write $\W_n= \W(\sigma_n, \br{\psi}_E)$ and $\W_{n+1} = \W(\sigma_{n+1},\psi_E)$ for simplicity. Note that $\dim \sigma_n^{K'_n}= \dim \sigma_{n+1}^{K^{\prime, c}_{n+1}} =1$.
\index{W@$W_n, W_{n+1}, W_n^0, W_{n+1}^0$}
Let $W_n\in \W_n$ (resp. $W_{n+1} \in \W_{n+1}$) be an element in $\sigma_n^{K'_n}$ (resp. $\sigma_{n+1}^{K^{\prime, c}_{n+1}}$) with norm 1 with respect to the inner product $\theta_n$ (resp. $\theta_{n+1}$). 
\index{c@$c_n, c_{n+1}$}
Let $W_i^0=c_i^{-1}W_i$ be the multiple of $W_i$ such that $W_i^0(1)=1$ for $i=n,n+1$ and let $\br{\sigma_n}$ (resp. $\br{\sigma_u}$) denote the unramified representation whose Satake parameters are complex conjugates of those of $\sigma_n$ (resp. $\sigma_u$).
Then by \cite[Proposition 2.3]{Jacquet-Shalika} we have 
\begin{align}
\label{eq_constants}
(c_n\br{c_n})^{-1}&=\theta_n(W_n^0, W_n^0)=\vol(\GL_{n-1}(\roi_E))L(1, \sigma_n \times \br{\sigma_n}),\\
(c_{n+1}\br{c_{n+1}})^{-1}&=\theta_{n+1}(W_{n+1}^0, W_{n+1}^0)= \vol(\GL_n(\roi_E))L(1,\sigma_u \times \br{\sigma_u}).
\end{align}
Now by Equation (3) in Section (1.4) of \cite{Jacquet-Shalika_2} and Equation (5) of \cite{Matringe}, we have 
\begin{align*}
\lambda(W_n^0\otimes W_{n+1}^0)=\vol(\GL_n(\roi_E))L(\frac{1}{2}, \sigma_n \times \sigma_u) =\vol(\GL_n(\roi_E))L(\frac{1}{2}, \sigma_n \times \sigma_{n+1}) .
\end{align*}
Then it follows easily from definitions and \Cref{essential_whit_fcn} that 
\begin{align}
\label{calc_I}
I_\sigma(\id_{K^{\prime, c}})&=\lambda(\sigma(\id_{K^{\prime, c}})(W_n\otimes W_{n+1})) \br{\beta_n(W_n)\beta_{n+1}(W_{n+1})}\nonumber \\
&=\vol(K^{\prime, c})\lambda(W_n\otimes W_{n+1}) \br{\beta_n(W_n)\beta_{n+1}(W_{n+1})}\\
&=\vol(K^{\prime, c})\vol(\GL_n(\roi_E))c_nc_{n+1}L(\frac{1}{2},\sigma_{n} \times \sigma_{n+1}) \br{\beta_n(W_n)\beta_{n+1}(W_{n+1})}. \nonumber
\end{align}

On the other hand, it is known that for the spherical function $W_n$ we have \cite{Zhang14} 
\begin{equation}
\label{calc_unram_beta}
\beta_n(W_n)=c_n\vol(\GL_{n-1}(\roi_F))L(1,\sigma_n, As^{(-1)^{n-1}}).
\end{equation}
So it only remains to calculate $\beta_{n+1}(W_{n+1})$. 
This calculation is known to Matringe and Anandavardhanan \cite[Theorem 6.1]{Anandavardhanan-Matringe}, but we will repeat it here for reader's convenience.
Let $\sigma_u$, $r$, and $\chi_1,\cdots, \chi_r$ be as in \Cref{pi_unram} with respect to $\sigma_{n+1}$ and let $\alpha_i=\chi_i(\vp)$ for $1\leq i \leq r$. 
For $m\geq 1$, let $\delta_{E,m}$ and $\delta_{F,m}$ be the modular characters of the standard Borel subgroup of $\GL_m(E)$ and $\GL_m(F)$ respectively. 

\begin{lem}
\label{lem_beta}
Let $\sigma_{n+1}$ be a tempered generic representation of $\GL_{n+1}(E)$ and let $\psi_E$ be as defined in the beginning of \Cref{section_integral}. Let $W_{n+1}^0\in \W(\sigma_{n+1},\psi_E)$ be the normalized essential Whittaker function and let $\sigma_u$ be as in \Cref{pi_unram}. Then 
\[\beta_{n+1}(W_{n+1}^0)=\vol(\GL_n(\roi_F)) L(1, \sigma_u, As^{{(-1)}^n}).\]
\end{lem}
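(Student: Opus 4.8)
\textbf{Proof proposal for Lemma \ref{lem_beta}.}

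The plan is to compute the Flicker--Rallis integral $\beta_{n+1}(W_{n+1}^0)$ directly by unfolding the definition and feeding in the explicit formula for the essential Whittaker function provided by \Cref{thm_ess_Whit}. Recall that
\[\beta_{n+1}(W_{n+1}^0)=\int_{N_n(F)\backslash \GL_n(F)} W_{n+1}^0(\epsilon_{n+1}(\tau)\,g)\,\eta_{E/F}(\det g)^{n}\,dg,\]
where $\epsilon_{n+1}(\tau)=\diag(\tau^{n},\tau^{n-1},\dots,1)$ and $\tau\in\roi_E^\times$ has trace zero. The first step is to use the right $\GL_n(\roi_E)$-invariance of $W_{n+1}^0$ together with the Iwasawa decomposition on $\GL_n(F)$ to replace the integral over $N_n(F)\backslash\GL_n(F)$ by an integral over the diagonal torus $A_n(F)\cong (F^\times)^n$, picking up the inverse modular character $\delta_{F,n}^{-1}$ of the Borel of $\GL_n(F)$; since $E/F$ is unramified, $\GL_n(\roi_F)\subset\GL_n(\roi_E)$ so the spherical averaging is clean. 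Because the matrix $\epsilon_{n+1}(\tau)$ only rescales the diagonal, after a change of variables the integrand becomes $W_{n+1}^0$ evaluated at a diagonal matrix of the form $\diag(\vp^{f_1},\dots,\vp^{f_n},1)$ times a unit, so the twist by $\tau$ disappears from the support condition (here I will want $\tau$ a unit, exactly as assumed).

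The second step is to insert \Cref{thm_ess_Whit}: the integrand vanishes unless $f_1\geq\cdots\geq f_r\geq 0=f_{r+1}=\cdots=f_n$, and on that range it equals $W_{\sigma_u}^0(\diag(\vp^{f_1},\dots,\vp^{f_r}))\,|\vp^{f_1+\cdots+f_r}|_E^{(n+1-r)/2}$. Thus the integral collapses to a sum over $f_1\geq\cdots\geq f_r\geq 0$ of $W_{\sigma_u}^0$ at $\vp^{(f_1,\dots,f_r)}$, weighted by a power of $q_F$ coming from $\delta_{F,n}^{-1}$, the extra factor $|\cdot|_E^{(n+1-r)/2}$, and the character $\eta_{E/F}$ (which on $\diag(\vp^{f_i},\dots)$ is $(-1)^{\sum f_i}$). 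Plugging in the Casselman--Shalika/Shintani formula \eqref{sph_whit} for $W_{\sigma_u}^0$, namely $\delta_{E,r}(\vp^\lambda)^{1/2}s_\lambda(\alpha)$ with $\alpha_i=\chi_i(\vp)$, one is left with a purely combinatorial sum of the shape $\sum_{\lambda} s_\lambda(\alpha)\, t^{|\lambda|}$ over partitions $\lambda$ of length $\leq r$, for an appropriate monomial $t$ in $q_F^{-1/2}$ and the sign.

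The third step is to recognize this generating function. The dominant-weight sum $\sum_{\lambda}s_\lambda(\alpha)t^{|\lambda|}$ is a classical identity: it equals $\prod_i(1-t\alpha_i)^{-1}\prod_{j<k}(1-t^2\alpha_j\alpha_k)^{-1}$ when the relevant sign is $+1$, and the $-$ variant $\prod_i(1+t\alpha_i)^{-1}\prod_{j<k}(1-t^2\alpha_j\alpha_k)^{-1}$ when it is $-1$ — precisely the Euler products for $L(1,\sigma_u,As^{\pm})$ recorded in \Cref{base_change} at $s=1$ with $t=q_F^{-1}$, once the powers of $q_F$ from $\delta_{E,r}^{1/2}$, $\delta_{F,n}^{-1}$, and $|\cdot|_E^{(n+1-r)/2}$ are bookkept. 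The parity $(-1)^n$ in the exponent of the Asai $L$-function will emerge from tracking the sign character $\eta_{E/F}$ through this computation, paralleling \eqref{calc_unram_beta}. Finally, the leftover volume constant must be pinned down: unfolding introduces $\vol(N_n(\roi_F))$ and the Iwasawa Jacobian, and these recombine to $\vol(\GL_n(\roi_F))$ using \eqref{vol_GL_OF}, giving the stated answer $\vol(\GL_n(\roi_F))L(1,\sigma_u,As^{(-1)^n})$.

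The main obstacle I anticipate is the bookkeeping of the three competing powers of $q$ — the modular character $\delta_{F,n}$ on the $\GL_n(F)$ side, the modular character $\delta_{E,r}$ hidden inside $W_{\sigma_u}^0$, and the normalizing factor $|\vp^{f_1+\cdots+f_r}|_E^{(n+1-r)/2}$ from \Cref{thm_ess_Whit} — together with the relation $q_E=q_F^2$, so that the combined weight $t$ in the partition sum comes out to be exactly $q_F^{-1}$ (up to the sign from $\eta_{E/F}$) and not some other power; getting this exponent right is what makes the answer land on the $L$-value at $s=1$ rather than a shifted one. A secondary subtlety is justifying the interchange of the torus integral with the (convergent) partition sum and confirming absolute convergence, which follows from temperedness of $\sigma_{n+1}$ (hence $|\alpha_i|=1$ for the unramified part after the appropriate normalization, or at worst the standard estimates on $\mathrm{Re}(\chi_i)$), but should be stated carefully. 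This is exactly the computation carried out by Anandavardhanan--Matringe \cite[Theorem 6.1]{Anandavardhanan-Matringe}, so I expect no conceptual difficulty beyond matching normalizations.
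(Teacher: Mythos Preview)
Your proposal is correct and follows essentially the same route as the paper's own proof: Iwasawa decomposition on $\GL_n(F)$ together with the right $\GL_n(\roi_E)$-invariance of $W_{n+1}^0$ reduces the integral to a sum over dominant cocharacters, \Cref{thm_ess_Whit} and the Casselman--Shalika formula \eqref{sph_whit} turn the summand into $s_f(\alpha)$ times a monomial in $q_F^{-1}$ and a sign, the $q$-bookkeeping collapses the weight to $((-1)^n q_F^{-1})^{|f|}$, and the Schur-function generating series is identified with $L(1,\sigma_u,As^{(-1)^n})$ via the identity in Macdonald \cite[I.5, Example~4]{macdonald}. The paper simply writes this chain of equalities out line by line; your anticipated ``main obstacle'' (the cancellation of $\delta_{E,r}^{1/2}$, $\delta_{F,n}^{-1}$, and $|\cdot|_E^{(n+1-r)/2}$ to leave exactly $q_F^{-|f|}$) is precisely the content of the middle three lines of that display.
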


\begin{proof}
By \Cref{thm_ess_Whit} and \Cref{sph_whit},
\begin{align*}
 &\beta_{n+1}(W_{n+1}^0) \\
=&\int_{N_{n}(F)\setminus \GL_{n}(F)} W_{n+1}^0 (\epsilon_{n+1}(\tau) \mat{g&\\&1}) \eta_{E/F}(\det g)^{n} dg\\
=& \vol(\GL_n(\roi_F))\sum_{f=(f_1,\cdots, f_{n})\in \Z^{n}} 
W_{n+1}^0(\epsilon_{n+1}(\tau) \mat{\vp^f&\\&1})\delta_{F,n}^{-1}(\vp^f)\eta_{E/F}(\vp^{f_1+\cdots +f_{n}})^{n} \\
=&\vol(\GL_n(\roi_F)) \sum_{f_1\geq \cdots \geq f_r\geq 0} 
W_{\pi_u}^0(\vp^f) 
|\vp^{f_1+\cdots+f_r}|_E^{\frac{n+1-r}{2}} \delta_{F,n}^{-1}(\mat{\vp^f&\\&I_{n-r}})
(-1)^{n(f_1+\cdots +f_r)} \\
=& \vol(\GL_n(\roi_F)) \sum_{f_1\geq \cdots \geq f_r\geq 0} 
\delta_{E,r}(\vp^f)^{1/2}s_f(\alpha) |\vp^{f_1+\cdots+f_r}|_F^{n+1-r}
\delta_{F,n}^{-1}(\mat{\vp^{f}&\\ &I_{n-r}}) (-1)^{n(f_1+\cdots +f_r)}\\
=&\vol(\GL_n(\roi_F)) \sum_{f_1\geq \cdots \geq f_r\geq 0} 
\left(\prod_{i=1}^r |\vp^{f_i}|_E^{(r+1-2i)/2} s_f(\alpha)\right) 
|\vp|_F^{(f_1+\cdots+f_r)(n+1-r)}\\
&\hspace{8cm} \left(\prod_{i=1}^r |\vp^{f_i}|_F^{-(n+1-2i)} \right) 
(-1)^{n(f_1+\cdots +f_r)} \\
=&\vol(\GL_n(\roi_F))\sum_{f_1\geq \cdots \geq f_r\geq 0} s_f(\alpha) (-1)^{n(f_1+\cdots +f_r)}|\vp|_F^{f_1+\cdots+f_r}\\
=&\vol(\GL_n(\roi_F))\sum_{f_1\geq \cdots \geq f_r\geq 0} s_f(\alpha) ((-1)^{n}|\vp|_F)^{f_1+\cdots + f_r}\\
=&\vol(\GL_n(\roi_F))\sum_{f_1\geq \cdots \geq f_r\geq 0} s_f((-1)^n q_F^{-1} \alpha) \\
=&\vol(\GL_n(\roi_F))\prod_{1\leq i\leq r}(1- (-1)^n q_F^{-1} \alpha_i)^{-1}\prod_{1\leq i<j\leq r} (1-q_F^{-2}\alpha_i \alpha_j)^{-1}\\
=&\vol(\GL_n(\roi_F)) L(1, \sigma_u, As^{{(-1)}^n}),
\end{align*}
where the second to last step follows from Example 4 in Section I.5 of \cite{macdonald}. 
\end{proof}

In general $L(1, \sigma_u, As^{{(-1)}^n})$ is not equal to $L(1,\sigma_{n+1}, As^{{(-1)}^n})$, so we have to keep the representation $\sigma_u$ in the final result. 
Combining \Cref{lem_beta} with Equations (\ref{eq_constants}), (\ref{calc_I}), and (\ref{calc_unram_beta}), we obtain the following lemma.
\begin{lem}
\label{lemma_calc_I}
Let $\sigma_n$ and $\sigma_{n+1}$ be irreducible tempered unitary representations of $\GL_n(E)$ and $\GL_{n+1}(E)$ respectively such that $\sigma_n$ is unramified and $\sigma_{n+1}$ has conductor $c$. 
Let $\sigma=\sigma_n \boxtimes \sigma_{n+1}$ and let $\sigma_u$ be the unramified representation derived from $\sigma_{n+1}$ as in \Cref{pi_unram}, then
\begin{align*}
I_\sigma(\id_{K^{\prime, c}})
&=\frac{\vol(K_n')\vol(K^{\prime, c}_{n+1})\vol(\GL_{n-1}(\roi_F))\vol(\GL_{n}(\roi_F))}{\vol(\GL_{n-1}(\roi_E))} \times\\
& \qquad \qquad \frac{L( \frac{1}{2}, \sigma_{n} \times \sigma_{n+1})
\br{L(1,\sigma_n, As^{(-1)^{n-1}})}\br{L(1,\sigma_u, As^{{(-1)}^n})}}{L(1,\sigma_n \times \br{\sigma_n})L(1,\sigma_u \times \br{\sigma_u})}.
\end{align*}
\end{lem}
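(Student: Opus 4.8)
The plan is to obtain the formula by pure bookkeeping, since all the analytic input has already been assembled and only needs to be combined. First I would start from the expression recorded in Equation (\ref{calc_I}),
\[
I_\sigma(\id_{K^{\prime,c}})=\vol(K^{\prime,c})\vol(\GL_n(\roi_E))\,c_n c_{n+1}\,L(\tfrac12,\sigma_n\times\sigma_{n+1})\,\br{\beta_n(W_n)\beta_{n+1}(W_{n+1})},
\]
and rewrite the two Flicker--Rallis integrals in terms of the normalized spherical and essential Whittaker functions. For the $\GL_n$-factor, Equation (\ref{calc_unram_beta}) gives $\beta_n(W_n)=c_n\vol(\GL_{n-1}(\roi_F))L(1,\sigma_n,As^{(-1)^{n-1}})$. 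For the $\GL_{n+1}$-factor, since $W_{n+1}=c_{n+1}W_{n+1}^0$ with $W_{n+1}^0$ the normalized essential Whittaker function, \Cref{lem_beta} yields $\beta_{n+1}(W_{n+1})=c_{n+1}\vol(\GL_n(\roi_F))L(1,\sigma_u,As^{(-1)^n})$.

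Substituting these (and noting that volumes are real, so complex conjugation only touches the $L$-values and the constants $c_i$), the right-hand side becomes
\[
\vol(K^{\prime,c})\vol(\GL_n(\roi_E))\,(c_n\br{c_n})(c_{n+1}\br{c_{n+1}})\,\vol(\GL_{n-1}(\roi_F))\vol(\GL_n(\roi_F))\,L(\tfrac12,\sigma_n\times\sigma_{n+1})\,\br{L(1,\sigma_n,As^{(-1)^{n-1}})}\;\br{L(1,\sigma_u,As^{(-1)^n})}.
\]
I would then eliminate $c_n\br{c_n}$ and $c_{n+1}\br{c_{n+1}}$ using Equations (\ref{eq_constants}), which give $(c_n\br{c_n})^{-1}=\vol(\GL_{n-1}(\roi_E))L(1,\sigma_n\times\br{\sigma_n})$ and $(c_{n+1}\br{c_{n+1}})^{-1}=\vol(\GL_n(\roi_E))L(1,\sigma_u\times\br{\sigma_u})$, together with the factorization $\vol(K^{\prime,c})=\vol(K'_n)\vol(K^{\prime,c}_{n+1})$ coming from $K^{\prime,c}=K'_n\times K^{\prime,c}_{n+1}$. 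The factor $\vol(\GL_n(\roi_E))$ coming from (\ref{calc_I}) cancels against the $\vol(\GL_n(\roi_E))$ hidden in $(c_{n+1}\br{c_{n+1}})^{-1}$, and what remains is exactly the asserted ratio of volumes and $L$-factors.

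Since every step is the substitution of a previously established identity, there is no real obstacle here. The only points requiring care are purely clerical: keeping track of which $L$-values enter under a complex conjugation (those coming from the $\br{\cdot}$ in (\ref{calc_I})), and applying the normalizations $W_i=c_iW_i^0$ with the correct power of $c_i$ so that the stray factors of $c_n,c_{n+1}$ combine into the absolute values $c_n\br{c_n},\,c_{n+1}\br{c_{n+1}}$ that (\ref{eq_constants}) controls.
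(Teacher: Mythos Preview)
Your proposal is correct and follows exactly the paper's approach: the paper simply states that the lemma is obtained by combining \Cref{lem_beta} with Equations (\ref{eq_constants}), (\ref{calc_I}), and (\ref{calc_unram_beta}), and your write-up carries out precisely that substitution and cancellation.
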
 

Now let notations be as in \Cref{section_intro} and we assume in addition $2|c-\epsilon$.
Recall that with the basis $\mathcal{B}_W$ and $\mathcal{B}_V$ chose in \Cref{notation_n_measure}, the hermitian matrices on $W$ and $V$ are $I_n$ and $\mat{I_n&\\&\vp^c}$ respectively.
Then $K_n=U(W)(\roi_F)$ and $K_{n+1}^c=U(V)(\roi_F)\cap \bm{K}_{n+1}^c$, so \Cref{thm:BP_I-J} and \Cref{hom_fl} imply that 
\[J_\pi(\id_K)=L(1,\eta_{E/F}) c_1 I_\sigma(\id_{K'}),\]
with $c_1$ as in \Cref{hom_fl}. Combining \Cref{lemma_calc_I} and Equations (\ref{vol_GL_OF}), (\ref{vol_GL_OE}), and (\ref{vol_kprime}), we arrive at the following lemma.

\begin{lem}
\label{lemma_calc_J}
With notations in \Cref{section_intro}, when $\epsilon$ and $c$ are of the same parity, we have
\[J_\pi(\id_{K^c})=C\frac{L(\frac{1}{2},\sigma_n \times \sigma_{n+1})\br{L(1,\sigma_n, As^{(-1)^{n-1}})L(1,\sigma_u, As^{(-1)^{n}})}}{L(1,\sigma_n \times \br{\sigma_n})L(1,\sigma_u \times \br{\sigma_u})},\]
where
\[C=\vol(K_n)^2 L(1,\eta_{E/F})q_F^{-c(n+1)}(1+q_F^{-n}).\]
\end{lem}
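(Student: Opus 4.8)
The plan is to run the chain of identities assembled just before the statement: \Cref{thm:BP_I-J} together with \Cref{hom_fl} converts $J_\pi(\id_{K^c})$ into an explicit multiple of $I_\sigma(\id_{K^{\prime,c}})$, \Cref{lemma_calc_I} evaluates $I_\sigma(\id_{K^{\prime,c}})$, and everything else is the bookkeeping of the local volume factors introduced in \Cref{notation_n_measure}. First I would make the two sets of notation agree. Since we assume $2\mid c-\epsilon$, the basis $\mathcal{B}_V$ fixed in \Cref{section_measures} puts the hermitian form of $V$ in the shape $\diag(I_n,\vp^c)$, so $V$ is isometric to the space $V_0=W_0\oplus^\perp\pa{u_0}$ of \Cref{section_fl} with $\pa{u_0,u_0}=\vp^c$; under this identification, and with $\Lambda_n=\roi_E^n$, $\Lambda_{n+1}=\roi_E^n\oplus\roi_E u_0$, the compact groups $K_n=U(W)(\roi_F)$, $K^c_{n+1}=U(V)(\roi_F)\cap\bm{K}^c_{n+1}$ and $K^c=K_n\times K^c_{n+1}$ of \Cref{section_intro} coincide with the ones used in \Cref{hom_fl}. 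I would also record that, because $\pi_n$ is unramified, $\sigma_n=BC(\pi_n)$ is an unramified tempered unitary representation of $\GL_n(E)$, while $\sigma_{n+1}=BC(\pi_{n+1})$ is tempered, unitary, and of conductor $c$ by the very definition of the conductor of $\pi_{n+1}$ in \Cref{base_change}; hence $\sigma=\sigma_n\boxtimes\sigma_{n+1}$ satisfies the hypotheses of \Cref{lemma_calc_I}, and $\pi$ is $H$-distinguished by assumption, so the hypotheses of \Cref{thm:BP_I-J} are met as well.

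Next I would pass from the unitary side to the general linear side. By \Cref{hom_fl} the pair $(\id_{K^c},0)$ matches $c_1\,\id_{K^{\prime,c}}$, and since $\pi_n$ is unramified the central character satisfies $\omega_{BC(\pi_n)}(\tau)=1$ (the remark after \Cref{thm:BP_I-J}). Feeding this matching pair into \Cref{thm:BP_I-J} and using linearity of $I_{BC(\pi)}$ yields
\[J_\pi(\id_{K^c})=L(1,\eta_{E/F})\,c_1\,I_\sigma(\id_{K^{\prime,c}}),\qquad \sigma=BC(\pi).\]

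Then I would substitute the explicit value of $c_1$ from \Cref{hom_fl} together with the formula for $I_\sigma(\id_{K^{\prime,c}})$ from \Cref{lemma_calc_I}. The ratio of $L$-functions produced is already exactly the one appearing in the statement, the factors $\vol(\GL_n(\roi_F))$ and $\vol(K'_n)$ in the denominator of $c_1$ cancel against the same factors supplied by \Cref{lemma_calc_I}, and what survives is
\[C=L(1,\eta_{E/F})\,\vol(K_n)^2\;\frac{\vol(K^{\prime,c}_{n+1})\,\vol(\GL_{n-1}(\roi_F))}{\vol(\bm{K}^c_{n+1}\cap\GL_{n+1}(\roi_F))\,\vol(\GL_{n-1}(\roi_E))},\]
where I used $K^{\prime,c}_{n+1}\cap\GL_{n+1}(\roi_F)=\bm{K}^c_{n+1}\cap\GL_{n+1}(\roi_F)$. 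Plugging in the volume computations (\ref{vol_kprime}), (\ref{vol_GL_KF}), (\ref{vol_GL_OF}) and (\ref{vol_GL_OE}), the $\zeta_E(1)$ and $\zeta_F(1)$ factors cancel, the products over $E$ telescope to $1-q_E^{-n}$, the products over $F$ to $(1-q_F^{-n})^{-1}$, and the power of the residue cardinality becomes $q_E^{-c(n+1)}/q_F^{-c(n+1)}=q_F^{-c(n+1)}$ because $E/F$ is unramified, i.e.\ $q_E=q_F^2$. The same relation gives $1-q_E^{-n}=1-q_F^{-2n}=(1-q_F^{-n})(1+q_F^{-n})$, so the displayed ratio collapses to $q_F^{-c(n+1)}(1+q_F^{-n})$ and $C=\vol(K_n)^2L(1,\eta_{E/F})q_F^{-c(n+1)}(1+q_F^{-n})$, as claimed.

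No single step here is deep: the argument is a concatenation of \Cref{thm:BP_I-J}, \Cref{hom_fl} and \Cref{lemma_calc_I} followed by an elementary volume computation. The two points that actually need care are the isometry $V\cong V_0$ — this is precisely where the hypothesis $2\mid c-\epsilon$ enters, since otherwise $V$ is the other pure inner form and \Cref{hom_fl}, whose unitary side is supported on the $G_0$-component, would not directly apply — and checking that the Haar measure normalizations underlying \Cref{thm:BP_I-J}, \Cref{hom_fl} and \Cref{lemma_calc_I} are the common ones of \Cref{section_measures}, so that the many volume factors cancel cleanly. I expect that consistency check, rather than any conceptual difficulty, to be the main obstacle.
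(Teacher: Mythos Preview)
Your proposal is correct and follows exactly the same route as the paper: identify $V$ with $V_0$ via the parity hypothesis, combine \Cref{thm:BP_I-J} and \Cref{hom_fl} to obtain $J_\pi(\id_{K^c})=L(1,\eta_{E/F})\,c_1\,I_\sigma(\id_{K^{\prime,c}})$, then plug in \Cref{lemma_calc_I} and simplify the volumes using (\ref{vol_GL_OF}), (\ref{vol_GL_OE}), (\ref{vol_kprime}) and (\ref{vol_GL_KF}). Your write-up in fact spells out the volume cancellation more explicitly than the paper does, but the argument is the same.
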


One thing to keep in mind is that for $c_1>c_2$, $K_{n+1}^{c_1}$ is not contained in $K_{n+1}^{c_2}$ even though $\bm{K}_{n+1}^{c_1} \subset \bm{K}_{n+1}^{c_2}$, this is because the matrix form of $K_{n+1}^c$ depends on the basis $\mathcal{B}_V$, which is chosen differently for different $c$'s.

\begin{lem}
\label{lemma_Satake}
Let $V$ be a hermitian space over $E$ and $\pi$ be a tempered representation of $U(V)$. Then the Satake parameters $\{a_1,\cdots, a_r\}$ of $BC(\pi)_u$ satisfy $\{a_1,\cdots, a_r\}=\{a_1^{-1}, \cdots, a_r^{-1}\}$.
\end{lem}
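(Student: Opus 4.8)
The plan is to exploit two features of the situation: $BC(\pi)$ is conjugate-self-dual because it is the base change of a representation of a unitary group, and Galois conjugation for the unramified extension $E/F$ acts trivially on unramified data. Combining these will show that $BC(\pi)_u$ is self-dual, which is exactly what is asserted.

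First I would recall that, $\pi$ being tempered, its $L$-parameter $\varphi$ is bounded, and by \Cref{base_change} $\varphi$ is realized as a conjugate-dual representation of $WD(E)$; hence $BC(\pi)=\Pi(\varphi)$ is a tempered representation of $\GL_m(E)$ with $m=\dim V$ and is conjugate-self-dual, i.e. isomorphic to $(BC(\pi)^\sigma)^\vee$, where $(\cdot)^\sigma$ is the twist by $\Gal(E/F)$ and $(\cdot)^\vee$ the contragredient. Writing the decomposition into segments $BC(\pi)=\Delta_1\times\cdots\times\Delta_t$ as in \Cref{pi_unram}, with $\Delta_i$ built on a cuspidal $\rho_i$ and of length $k_i$, one has $(BC(\pi)^\sigma)^\vee=(\Delta_1^\sigma)^\vee\times\cdots\times(\Delta_t^\sigma)^\vee$, and $(\Delta_i^\sigma)^\vee$ is the segment of the same length $k_i$ built on $(\rho_i^\sigma)^\vee$. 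By the uniqueness (up to order) of the decomposition into segments, conjugate-self-duality then yields an equality of multisets
\[\{(\rho_1,k_1),\dots,(\rho_t,k_t)\}=\{((\rho_1^\sigma)^\vee,k_1),\dots,((\rho_t^\sigma)^\vee,k_t)\}.\]

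Next I would observe that a uniformizer $\vp$ of $F$ is also a uniformizer of $E$ and is fixed by $\sigma$, so every unramified character of $E^\times$ is $\sigma$-invariant, and its contragredient is again unramified; hence $\rho\mapsto(\rho^\sigma)^\vee$ carries the set of unramified characters of $E^\times$ into itself and coincides there with $\rho\mapsto\rho^{-1}$. Restricting the displayed identity to the indices $i$ for which $\rho_i$ is an unramified character of $E^\times$ — precisely the characters $\chi_1,\dots,\chi_r$ appearing in \Cref{pi_unram} — and using that $(\rho_i^\sigma)^\vee=\rho_i^{-1}$ is unramified exactly when $\rho_i$ is, I conclude that the multiset $\{\chi_1,\dots,\chi_r\}$ is stable under $\chi\mapsto\chi^{-1}$. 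Evaluating at $\vp$ gives $\{a_1,\dots,a_r\}=\{a_1^{-1},\dots,a_r^{-1}\}$ for the Satake parameters $a_i=\chi_i(\vp)$ of $BC(\pi)_u=\chi_1\times\cdots\times\chi_r$.

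The one point requiring care is exactly this bookkeeping: one must verify that $\rho\mapsto(\rho^\sigma)^\vee$ genuinely restricts to an involution of the unramified characters and preserves the lengths $k_i$, so that the global multiset identity coming from conjugate-self-duality descends to the sub-multiset indexing $BC(\pi)_u$. Granting the conjugate-self-duality of the base change (\Cref{base_change}) and the triviality of $\sigma$ on unramified characters, there is no further obstacle. Alternatively, one may bypass the classification of segments by substituting $BC(\pi)\cong(BC(\pi)^\sigma)^\vee$ into Matringe's formula \Cref{thm_ess_Whit} and reading off the induced symmetry of the spherical Whittaker function of $BC(\pi)_u$; this leads to the same identity.
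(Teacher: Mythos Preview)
Your argument is correct in spirit and reaches the conclusion, but it proceeds along a genuinely different route from the paper. The paper works entirely on the Galois side: it decomposes the $L$-parameter $M$ of $BC(\pi)$ into irreducible conjugate-dual summands $M_i$, $N_i$ and non-self-dual pairs $P_i\oplus(P_i^\sigma)^\vee$ as in \cite[\S4]{GGP}, writes each irreducible as $\rho\otimes Sym^r$, and then computes directly that for a one-dimensional unramified $\rho$ the Frobenius eigenvalue is $\pm1$ in the conjugate-dual case and that the $P_i'$ come in inverse pairs. You instead stay on the automorphic side, using only that $BC(\pi)\cong(BC(\pi)^\sigma)^\vee$, the uniqueness of the Bernstein--Zelevinsky segment decomposition, and the (crucial) observation that $\sigma$ fixes every unramified character of $E^\times$ because $E/F$ is unramified. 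Your route is more elementary in that it avoids the structure theory of conjugate-dual Weil--Deligne representations; the paper's route has the side benefit that its proof is invoked immediately afterward to see that the $L$-parameter of $\sigma_u$ is bounded (hence $\sigma_u$ is tempered and unitary), a fact your argument does not supply directly.

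One point of care you flagged yourself deserves to be made precise: the contragredient of $\Delta_i=[\nu^{-(k_i-1)}\rho_i,\rho_i]$ has top $\nu^{k_i-1}\rho_i^\vee$, not $\rho_i^\vee$, so the displayed multiset identity should strictly read $\{(\rho_i,k_i)\}=\{(\nu^{k_i-1}(\rho_i^\sigma)^\vee,k_i)\}$. This twist is invisible when $k_i=1$ and is exactly parallel to the shift on the Galois side between the Frobenius eigenvalue of the $W(E)$-part $M_i'$ and the value $\rho_i(\vp)$; so the bookkeeping issue is common to both approaches and should be tracked if you want a fully rigorous identification of the Satake parameters.
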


\begin{proof}
Let $M$ the $L$-parameter of $BC(\pi)$. As explained in \cite[Section 4]{GGP}, it can be decomposed into
\[M=\bigoplus V_i \otimes M_i + \bigoplus W_i \otimes N_i + \bigoplus U_i \otimes(P_i + (P_i^\sigma)^\vee),\]
where $M_i, N_i, P_i$ are irreducible representations of $WD(E)$, each $M_i$ is conjugate-dual of the same sign as $M$, each $N_i$ is conjugate-dual of opposite sign of $M$, and $V_i, W_i, U_i$ are multiplicity spaces. 
Each $M_i, N_i, P_i$ are of the form $\rho \otimes Sym^r$, where $\rho$ are representations of $W(E)$ (equivalently, $L$-parameters of supercuspidal representations of $\GL_n(E)$) and $Sym^r$ is the degree $r$ symmetric polynomial representation of $SL_2(\C)$ (see \cite[Chapter 12]{Getz-Hahn}).
So the $L$-parameter of $BC(\pi)_u$ is 
\[M_u=\bigoplus{'}\ V_i \otimes M'_i\ +\ \bigoplus{'}\ W_i \otimes N'_i\ +\ \bigoplus{'}\ U_i \otimes(P'_i + ({P'_i}^\sigma)^\vee),\]
where $\bigoplus' V_i \otimes M'_i$ means that the sum only runs over those $i$ such that $M_i=M'_i \otimes Sym^r$ and $M'_i$ is a 1-dimensional unramified representation of $W(E)$, and similarly for the other two terms. The Satake parameters of $BC(\pi)_u$ are images of the Frobenious element $\Fr_E \in \Gal(E)$ in the 1-dimensional unramified $M'_i, N'_i, P'_i$'s.

Let $s=\Fr_F \in W(F) \setminus W(E)$ and suppose that $M_i=M'_i \otimes Sym^r$, where $M'_i$ is a 1-dimensional unramified representation of $W(E)$. Since $M_i$ is conjugate-dual, by definition there is a non-degenerate bilinear form $B$ on $M_i$ such that 
\[B(\tau m, s\tau s^{-1}n) = B(m,n)\] 
for all $\tau \in WD(E)$ and $m,n \in M_i$. Take $\tau=\Fr_E$, then $\tau$ acts as multiplication by some $t \in \C^\times$ on $M_i'$ and acts trivially on $Sym^r$. Since $s \tau s^{-1} = \tau$, we have 
\[t^2B(a,b)=B(ta, tb)=B(\tau a, \tau b)=B(a,b), \quad a,b \in M_i,\]
i.e., $t=\pm 1$. Similarly for $N_i$.

Now if $P_i=P_i' \otimes Sym^r$ and $P_i'$ is an unramified 1-dimensional representation, then $(P_i^\sigma)^\vee=({P_i'}^\sigma)^\vee \otimes (Sym^r)^\vee = ({P_i'}^\sigma)^\vee \otimes Sym^r$.
Since $({P_i'}^\sigma)^\vee(\Fr_E)=(P_i')(\Fr_E)^{-1}$, the Satake parameters come in pairs.
\end{proof}

Let $\alpha_1, \cdots, \alpha_n$ be the Satake parameters of $\sigma_n$. Since $\sigma_n$ comes from base change, it is conjugate-dual of sign $(-1)^{n+1}$ (see \Cref{base_change}). Consequently, its Satake parameters satisfy $\{\alpha_1, \cdots, \alpha_n\}= \{\alpha_1^{-1}, \cdots, \alpha_n^{-1}\}$. Since $\sigma_n$ is a unitary representation, we have $\{\alpha_1^{-1}, \cdots, \alpha_n^{-1}\} = \{\br{\alpha}_1,\cdots, \br{\alpha}_n\}$, and it is straightforward to verify that 
\[\frac{\br{L(1,\sigma_n, As^{(-1)^{n-1}})}}{L(1,\sigma_n \times \br{\sigma_n})}=L(1,\sigma_n, As^{(-1)^{n}})^{-1}.\]
Since $\sigma_{n+1}$ is tempered, its $L$-parameter is bounded, and it follows from the proof of \Cref{lemma_Satake} that the $L$-parameter of $\sigma_u$ is also bounded. Hence $\sigma_u$ is tempered, thus unitary. By \Cref{lemma_Satake}, the Satake parameters $\beta_1,\cdots, \beta_r$ of $\sigma_u$ also satisfy $\{\beta_1,\cdots, \beta_r\} =\{\beta_1^{-1},\cdots, \beta^{-1}_r\}= \{\br{\beta}_1,\cdots, \br{\beta}_r\}$, and we can make the same cancellation as above for $\sigma_u$. This finishes the proof of \Cref{thm:mainthm}.

\section{Newforms}
\label{section_applications}

\subsection{Proof of \Cref{exist_newform}}
Notations in this section follows from those in \Cref{base_change} if unspecified. First we recall some notations and results from \cite{GGP} and \cite{BP_GGP}. 
In the rest of this section we fix a non-trivial additive character $\psi: E/F \to \C^\times$. Let $M,N$ be conjugate-dual representations of $WD(E)$ and let the local root number $\epsilon(M,\psi)$ be as in \cite[Section 5]{GGP}.
Let $B$ be a bilinear form on $M$ that makes it conjugate-dual, we define $C_M$ to be the centralizer in $\text{Aut}(M,B)$ of the image of $WD(E)$ in $\GL(M)$. If $M$ is an $L$-parameter of a representation of a unitary group, then this definition of $C_M$ is compatible with the one given in \Cref{base_change} (see \cite[Theorem 8.1]{GGP}).

For any semisimple $\alpha \in C_M$, let 
$M^\alpha = \{m \in M| \alpha m=-m\}$. Then $M^\alpha$ is a conjugate-dual representation of $WD(E)$ of the same sign as $M$.
We can define a character $\chi_N$ on $A_M$ by 
\[\chi_N(a) = \epsilon(M^\alpha \otimes N,\psi),\]
where $\alpha \in C_M$ is a lift of $a \in A_M$ that is semisimple.
Similarly define $\chi_M: A_N \to \C^\times$.

Let $W,V$ be hermitian forms over $E$ such that $W\subset V$ and $W^\perp$ is odd dimensional. Let $G=U(V) \times U(W)$.
A relevant inner form of $G$ is a group $G'=U(V') \times U(W')$ such that $\dim V=\dim V'$, $\dim W= \dim W'$, and ${W'}^\perp \subset V'$ and $W^\perp \subset V$ are isomorphic. There are two relevant inner forms of $G$ (including itself).

\begin{thm}[{\cite[Conjecture 17.3 and Theorem 19.1]{GGP}, \cite[Theorem 8.5.1]{BP_GGP}}]
\label{distinct_rep}
In every tempered Vogan $L$-packet of $G(F)$ there exists a unique representation $\pi$ of a relevant pure inner form $G'=U(V') \times U(W')$ such that $\hom_{H'}(\pi,\C)\neq 0$. Such $\pi$ is said to be distinguished. 
Moreover, let $M$ (resp. $N$) be a tempered $L$-parameter of $U(V)$ (resp. $U(W)$), then the unique distinguished $\pi$ in the packet $\Pi_{M}  \times \Pi_{N}$ of $G$ corresponds to the character $\chi_M \times \chi_N$ of $A_N \times A_M$ under the map $J_\psi$ in \Cref{LLC}.
\end{thm}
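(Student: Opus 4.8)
The statement is the local Gan--Gross--Prasad conjecture in the Bessel case for unitary groups; it was conjectured in \cite{GGP} and proved in general by Beuzart-Plessis \cite{BP_GGP}, using the local Langlands correspondence for unitary groups of \cite{Mok} and \cite{KMSW}, and I will only indicate the strategy one would follow. The plan is to split the assertion into two parts: first, the multiplicity identity
\[\sum_{\pi} \dim \Hom_{H'(F)}(\pi,\C) = 1,\]
where $\pi$ ranges over the members of $\Pi_M \times \Pi_N$ on both relevant pure inner forms of $G$; and second, the identification of the unique distinguished $\pi$ with the character $\chi_M \times \chi_N$ of $A_N \times A_M$ via the bijection $J_\psi$ of \Cref{LLC}.

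For the multiplicity identity I would first invoke the multiplicity-at-most-one theorem for Bessel models (Aizenbud--Gourevitch--Rallis--Schiffmann and Sun--Zhu in the basic case, with the general Bessel case reduced to it), so that $\dim \Hom_{H'(F)}(\pi,\C)\in\{0,1\}$ for each $\pi$; it then remains to show the total is exactly $1$. This is where a local relative trace formula enters, comparing the Bessel period on $G=U(V)\times U(W)$ and its inner forms with the Flicker--Rallis/Rankin--Selberg period on $G'=\Res_{E/F}(\GL_n\times\GL_{n+1})$: on the general linear side every tempered generic representation carries the relevant period, which on the unitary side forces the sum over a single $L$-packet to be $1$. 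The comparison of the geometric sides of the two trace formulas rests on smooth transfer and the Jacquet--Rallis fundamental lemma --- the same circle of ideas underlying \Cref{JR_fl} and \Cref{hom_fl}, but applied to the full trace formula rather than to a single test function --- while the spectral sides are matched using the stable and endoscopic decomposition of the relevant distributions; the reduction from an arbitrary odd-dimensional $W^\perp$ to the basic case $\dim W^\perp=1$ is carried out by the standard parabolic-descent argument of \cite[Section 17]{GGP}.

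For the identification of the distinguished member I would feed the endoscopic character identities of \cite{Mok} and \cite{KMSW} into the spectral expansion, which pins down the distinguished representation by an explicit local invariant, and then compute that invariant in terms of local root numbers, matching it against $\epsilon(M^\alpha\otimes N,\psi)$ and the analogous $\epsilon$-factors $\epsilon(M\otimes N^\beta,\psi)$ defining $\chi_M$. I expect this last matching to be the main obstacle: it demands the precise normalization of the local Langlands correspondence, control of how $\epsilon$-factors vary under the relevant deformations of parameters, and --- in Beuzart-Plessis's argument --- a globalization step that transfers the identity from situations where it can be verified directly (low-rank cases, or cases accessible via the theta correspondence) to the general tempered parameter. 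Once both parts are in hand, the theorem follows.
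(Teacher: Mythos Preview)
The paper does not give a proof of this theorem. It is stated with the attributions \cite[Conjecture 17.3 and Theorem 19.1]{GGP} and \cite[Theorem 8.5.1]{BP_GGP} and then used as a black-box input (in the proof of \Cref{unique_in_packet} and of \Cref{exist_newform}). So there is no ``paper's own proof'' to compare your proposal against; the correct thing to do here is simply to cite the references, as the paper does.

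That said, a remark on your sketch: the strategy you outline conflates two distinct relative-trace-formula programs. The Jacquet--Rallis comparison (smooth transfer, fundamental lemma, matching of $J_\pi$ with $I_{BC(\pi)}$) is the machinery underlying \Cref{thm:BP_I-J} and the Ichino--Ikeda refinement; it is \emph{not} the route by which Beuzart-Plessis proves \Cref{distinct_rep}. His proof of the tempered local GGP conjecture follows Waldspurger's template for orthogonal groups: a purely local trace formula on $G$ and its inner forms producing a geometric multiplicity formula for $\sum_\pi m(\pi)$, together with the endoscopic character relations of \cite{Mok,KMSW} and an explicit computation of the germ at the identity, which yields the $\epsilon$-factor $\chi_N\times\chi_M$. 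No globalization step and no comparison with $\GL_n\times\GL_{n+1}$ periods is involved. If you want to keep a sketch, you should point to Waldspurger's multiplicity formula and Beuzart-Plessis's extension rather than to Jacquet--Rallis.
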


\begin{lem}
\label{unique_in_packet}
Let $V$ be a Hermitian space over $E$. At most one representation in each tempered Vogan $L$-packet of $U(V)$ contains newforms.
\end{lem}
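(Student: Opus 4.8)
The plan is to bootstrap the uniqueness from the already-established uniqueness in Theorem \ref{distinct_rep} (the Gan--Gross--Prasad conjecture for $U(W) \times U(V)$), by showing that ``containing a newform'' forces a representation $\pi_{n+1}$ to be $H$-distinguished when paired against a suitable unramified $\pi_n$. Concretely, suppose $\pi_{n+1}$ and $\pi_{n+1}'$ are two representations in the same tempered Vogan $L$-packet of $U(V)$, each containing a nonzero newform (i.e. each having a nonzero $K^c_{n+1}$-fixed vector, where $c$ is the conductor of the packet). I would first observe that a newform is, in particular, a nonzero vector fixed by a compact open subgroup, so by admissibility and genericity considerations the Rankin--Selberg local integral attached to $\pi_{n+1}$ against the spherical vector of a generic unramified $\pi_n$ of $U(W)$ is not identically zero; by Theorem \ref{thm:mainthm} (applied with this $\pi_{n+1}$), the value $J_\pi(\id_{K^c})$ computes a ratio of $L$-values that is nonzero for suitable choice of unramified $\pi_n$, since the relevant Rankin--Selberg and Asai $L$-functions are nonzero at the points in question and one can choose the Satake parameters of $\pi_n$ to avoid the (finitely many) zeros of the numerator. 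Hence $J_\pi(\id_{K^c}) \neq 0$ for some unramified $H$-distinguished $\pi = \pi_n \boxtimes \pi_{n+1}$, and in particular $\pi_{n+1}$ is $H$-distinguished when tensored with this $\pi_n$.

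Next I would invoke the Gan--Gross--Prasad dichotomy in the form of Theorem \ref{distinct_rep}: within the tempered Vogan $L$-packet $\Pi_M \times \Pi_N$ of $G = U(W) \times U(V)$ (here $N$ the parameter of $\pi_n$, fixed unramified; $M$ the parameter of the packet of $\pi_{n+1}$), there is exactly one constituent $\sigma_n \boxtimes \sigma_{n+1}$ with $\Hom_{H}(\sigma_n \boxtimes \sigma_{n+1}, \C) \neq 0$. Since $\pi_n$ is unramified, it is the base-point (generic) member of its own packet $\Pi_N$, and the GGP recipe pins down which member of $\Pi_M$ can be $H$-distinguished against it: it is the $\sigma_{n+1}$ corresponding to the character $\chi_N$ of $A_M$. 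The point is that this member is independent of which $\pi_{n+1}$ we started with — it depends only on the packet $\Pi_M$ and on $\pi_n$ — so both $\pi_{n+1}$ and $\pi_{n+1}'$, being $H$-distinguished against the same $\pi_n$, must equal this single distinguished member. Therefore $\pi_{n+1} \cong \pi_{n+1}'$, proving the lemma.

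The main obstacle I anticipate is the first step: turning ``has a $K^c_{n+1}$-fixed vector'' into ``is $H$-distinguished against some unramified $\pi_n$'', i.e. ensuring that the newform actually contributes nontrivially to the period and is not annihilated. The cleanest route is via Theorem \ref{thm:mainthm}: the formula there expresses $J_\pi(\id_{K^c})$ — which equals $\sum_{v} \int_{H(F)} \langle \pi(h)\pi(\id_{K^c}) v, v\rangle\, dh$ — as an explicit nonzero multiple of a ratio of $L$-values, and nonvanishing of $J_\pi(\id_{K^c})$ directly witnesses $\Hom_{H(F)}(\pi, \C) \neq 0$ (since the $H$-integral of a matrix coefficient is nonzero, the representation cannot be $H$-antidistinguished). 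One must check that Theorem \ref{thm:mainthm} is even applicable, which requires $\pi = \pi_n \boxtimes \pi_{n+1}$ to be $H$-distinguished in the first place — so there is a potential circularity. To break it, I would instead argue abstractly: if $\pi_{n+1}$ has a newform then, by the local GGP classification, \emph{some} member of $\Pi_M$ is $H$-distinguished against the unramified $\pi_n$, and one checks using the explicit description of $K^c_{n+1}$ as a Bessel-type congruence subgroup (cf. the discussion after Theorem \ref{exist_newform} relating newforms to the Bessel model $U(1) \ltimes N$) that only the member possessing a $K^c_{n+1}$-fixed vector can be the distinguished one — the conductor $c$ entering $K^c_{n+1}$ being exactly the conductor of the packet. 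Making this last compatibility precise (newform $\Leftrightarrow$ distinguished member, for the fixed unramified $\pi_n$) is the technical heart; everything else is a formal consequence of Theorem \ref{distinct_rep}.
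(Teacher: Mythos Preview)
Your overall strategy --- reduce to the Gan--Gross--Prasad uniqueness (Theorem \ref{distinct_rep}) by showing that a representation with a newform must be the distinguished member against some unramified $\pi_n$ --- is the same as the paper's. However, your execution has two genuine gaps.

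First, the step ``newform $\Rightarrow$ $H$-distinguished against some unramified $\pi_n$'' cannot be obtained from Theorem \ref{thm:mainthm}: as you yourself note, that theorem presupposes $H$-distinction, and your proposed workaround (``only the member possessing a $K^c_{n+1}$-fixed vector can be the distinguished one'') is precisely the existence part of Theorem \ref{exist_newform}, which is proved \emph{after} this lemma and \emph{using} it. The paper avoids this circularity entirely: it never invokes Theorem \ref{thm:mainthm} here. Instead, given a nonzero $v\in\pi_{n+1}^{K^c_{n+1}}$, it restricts the matrix coefficient $g\mapsto\langle\pi_{n+1}(g)v,v\rangle$ to $U(W)(F)$, observes it is nonzero at the identity, and applies the Plancherel decomposition on $U(W)(F)$ (following \cite{Gan_Savin_2012}) to produce some tempered $\pi_n$ and $w\in\pi_n$ with $\int_{U(W)}\langle\pi_{n+1}(h)v,v\rangle\overline{\langle\pi_n(h)w,w\rangle}\,dh\neq 0$. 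Because $v$ is $K_n$-invariant (as $K_n\subset K^c_{n+1}$), one may replace $w$ by its $K_n$-average without changing the integral, forcing $\pi_n$ to be unramified. This is the missing mechanism you need.

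Second, even granting that step, your argument assumes $\pi_{n+1}$ and $\pi_{n+1}'$ are distinguished against the \emph{same} unramified $\pi_n$. Nothing guarantees this: the Plancherel argument (or any argument) may produce different $\pi_n$ for each. The paper closes this gap with a computation you are missing: for any unramified parameter $N$ one has $N=N_1+{}^\sigma N_1^\vee+\mathrm{triv}$, and then for every $a\in A_M$,
\[
\chi_N(a)=\epsilon(M^a\otimes N,\psi)=\epsilon(M^a\otimes N_1 + {}^\sigma(M^a\otimes N_1)^\vee + M^a,\psi)=\epsilon(M^a,\psi),
\]
which is visibly independent of $N$. Thus any $\pi_{n+1}$ with a newform corresponds to the single fixed character $a\mapsto\epsilon(M^a,\psi)$ of $A_M$, and uniqueness follows. (For odd-dimensional $V$ the paper instead quotes \cite{Atobe-Oi-Yasuda} directly: newforms force genericity, and each packet has a unique generic member.)
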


\begin{proof}
When $V$ is odd-dimensional, Theorem 1.1 in \cite{Atobe-Oi-Yasuda} states that if an irreducible tempered representation of $U(V)$ contains newforms, then it is generic. The lemma now follows from the fact that every tempered Vogan $L$-packet of $U(V)$ contains a unique generic representation (see \cite[Corollary 9.2.4]{Mok}).
  
Below we assume $\dim V=n+1$ and $n$ is odd. We follow the approach in \cite{Atobe-Oi-Yasuda}.
Let $\Pi_\phi$ be a tempered Vogan $L$-packet of $U(V)$ with conductor $c$ and suppose that $\pi_{n+1} \in \Pi_\phi$ contains newforms. Without loss of genrality, we can assume $\pi_{n+1}$ is a representation of $U(V)$. Define $W, K_n, K^c_{n+1}$ as in \Cref{section_intro}.
Let $H=U(W)$ with diagonal embedding into $U(V) \times U(W)$.
  
We first show that there exists an unramified representation $\pi_{n}$ of $U(W)$ such that $\hom_H(\pi_{n+1}\otimes \pi_n,\C)\neq 0$. 
Choose $0\neq v \in \pi_{n+1}^{K^c_{n+1}}$ and let $f_{\pi_{n+1}}:U(V) \to \C$ be the matrix coefficient 
\[f_{\pi_{n+1}}: g \mapsto \pa{\pi_{n+1}(g)v,v}.\]
Since $f_{\pi_{n+1}}(1)\neq 0$, there exists $\phi\in C^\infty_c(U(W))$ such that 
\[\int_{U(W)}f_{\pi_{n+1}}(h)\br{\phi(h)}dh \neq 0.\]
By Plancherel theorem, following the argument in the proof of \cite[Lemma 12.5]{Gan_Savin_2012}, we see that there exists a representation $\pi_n$ of $U(W)$ and $w\in \pi_n$ such that the matrix coefficient $\phi_w: h \mapsto \pa{\pi_n(h)w,w}$ on $U(W)$ satisfies 
\[\int_{U(W)}f_{\pi_{n+1}}(h)\br{\phi_w(h)}dh \neq 0.\]
The integral above does not change when we replace $w$ by $\int_{K_n}\pi_n(k)w\ dk$, so we can assume $w \in \pi_n^{K_n}$. Thus $\pi_n$ is unramified and $\hom_{H}(\pi_{n+1}\boxtimes \pi_n,\C)\neq 0$.

Let $M$ be the $L$-parameter of $\pi_{n+1}$ and $N$ be the $L$-parameter of $\pi_n$. 
\Cref{distinct_rep} states that $\pi_{n+1}$ corresponds to $\chi_N$ under local Langlands correspondence.
Note that $\pi_n$ is unramified implies that $N$ is an unramified parameter \cite[Section 7.1]{Mok}, and \cite[Proposition 21.1]{GGP} implies that there exists a representation $N_1$ of $WD(E)$ such that $N=N_1+{}^\sigma N_1^\vee + triv$.
For all $a\in A_M$, since $M^a$ is conjugate-dual, by \cite[Proposition 5.1(2)]{GGP} we see 
\[\chi_N(a)=\epsilon(M^a \otimes N, \psi)= \epsilon(M^a\otimes N_1 + {}^\sigma(M^a\otimes N_1)^\vee + M^a, \psi)=\epsilon(M^a,\psi)\]
is independent of the choice of $N$. Hence the $\pi_{n+1} \in \Pi_M$ that contains newforms is unique. 
\end{proof}

\begin{rmk}
In the proof above, the $M^a$ can be quite arbitrary, so $\chi_N$ is not necessarily the trivial character. In other words, when $V$ is even-dimensional, even if the $\pi_{n+1}$ containing newforms is a representation of the quasi-split unitary group, it is not necessarily the generic one with respect to $\psi$. This is different from the case of odd-dimensional hermitian spaces.
\end{rmk}

\begin{proof}[Proof of \Cref{exist_newform}]
Let $V$ be an $n+1$ dimensional hermitian space and $\phi_{n+1}$ be a tempered $L$-parameter of $U(V)$ and let $c$ be its conductor. Let $W$ be a subspace of $V$ such that $W=V\oplus^\perp \pa{e}$ with $\pa{e,e}=1$ if $c$ is even and $\pa{e,e}=\vp$ if $c$ is odd. Define $K^c$ with respect to $V,W$ as in \Cref{section_intro}. 

If disc $W=1$, let $W_0=W$ and  $V_0=V$. Let $W_1$ be the $n$ dimensional hermitian space with discriminant $-1$ and let $V_1=W_1\oplus^\perp \pa{e_1}$ with $\pa{e_1, e_1}=\pa{e,e}$.

If disc $W=-1$, let $W_1=W$ and $V_1=V$. Let $W_0$ be the $n$ dimensional hermitian space with discriminant 1 and define $V_0$ accordingly as above.

Let $G_i=U(W_i) \times U(V_i)$ and $H_i=U(W_i)$ with diagonal embedding into $G_i$ for $i=0,1$. Let $\phi_n$ be a tempered $L$-parameter of $U(W)$ with conductor 0. By \Cref{distinct_rep}, there is a unique $\pi=\pi_n \times \pi_{n+1} \in \Pi_{\phi_n \times \phi_{n+1}}$ that is distinguished.

If $\pi$ is a representation of $G_1$, then \Cref{hom_fl} and \Cref{lemma_calc_I} imply that 
\[0=J_\pi(0)=I_{BC(\pi)}(\id_{K^{\prime, c}})\neq 0,\]
which is impossible. So $\pi$ is a representation of $G_0$. Then \Cref{thm:mainthm} implies that
\[0\neq J_\pi(\id_{K^c})=\int_{H_0(F)} \tr(\pi(h)\pi(\id_{K^c}))dh .\]
In particular, there exists $h\in H_0(F)$ such that $\tr(\pi(h)\pi(\id_{K^c})) \neq 0$. Hence there exists $v \otimes w\in \pi_n \times \pi_{n+1}$ such that $\pi(\id_{K^c})(v\otimes w)\neq 0$. In other words,
\begin{align*}
  0\neq \pi(\id_{K^c})(v\otimes w) 
  &= \int_{K_n \times K^c_{n+1}} \pi(k_{n}\otimes k_{n+1}) (w \otimes v) d k_n d k_{n+1}\\
  &= (\int_{K_n} \pi_n(k_n)w dk_n) (\int_{K^c_{n+1}} \pi_{n+1}(k_{n+1})v dk_{n+1}).
\end{align*}
Then $\int_{K^c_{n+1}} \pi_{n+1}(k_{n+1})v dk_{n+1}$ is nonzero and contained in $\pi_{n+1}^{K^c_{n+1}}$, hence a newform.

Uniqueness of $\pi_{n+1}$ follows from \Cref{unique_in_packet} directly.
\end{proof}

\begin{rmk}
With $V,W,\Lambda_n$ in \Cref{section_intro}, let $\Lambda_{n+1}^m=\Lambda_n+\vp^m\roi_Ee$ and let $K_{n+1}^m$ be the kernel of the map $\Stab_{U(V)(F)}(\Lambda_{n+1}^m) \to \GL((\Lambda_{n+1}^m)^\vee/\Lambda_{n+1}^m)$, \cite{Atobe-Oi-Yasuda} also proved that $\pi_{n+1}^{K_{n+1}^m}=0$ for $m<\lfloor\frac{c}{2} \rfloor$. (The $K_{n+1}^{\lfloor\frac{c}{2} \rfloor}$ here is our $K_{n+1}^c$ in \Cref{section_intro}.)
Atobe \cite{atobe2023} proved results of a similar form (under his definition of newforms) when $V$ is even-dimensional and quasi-split. 
\end{rmk}

\subsection{Uniqueness of newforms for two-dimensional unitary groups}
\label{section_uniqueness}

When $V$ is odd-dimensional, Atobe, Oi, and Yasuda \cite{Atobe-Oi-Yasuda} proved that in each tempered Vogan $L$-packet of $U(V)$, the generic representation is the unique one containing newforms and its subspace of newforms is one-dimensional. When $V$ is even-dimensional and $\pi$ is a representation of $U(V)(F)$ containing newforms, the dimension of the space of newforms is unknown. 

However, when $\dim V=2$, we know the uniqueness (up to scalar) of newforms. 
Indeed, let $V^+$ (resp. $V^-$) be a quasi-split (resp. non quasi-split) 2-dimensional hermitian space over $E$ and let $\Pi_2$ be a tempered Vogan L-packet of $U(V^\pm)$.  Let $K_2^\pm$ be compact subgroups of $U(V^\pm)(F)$ in the definition of newforms in \Cref{section_intro}. Note that the 1-dimensional unitary group $U(1)$ is compact and we write $K_1=U(1)(F)$. Let $triv$ be the trivial representation of $U(1)(F)$. We have $K_1\subset K_2^\pm$ and 
\begin{align*}
&\sum_{\pi_2^+\in \Pi_2}\dim {\pi_2^+}^{K_2^+} + \sum_{\pi_2^-\in \Pi_2}\dim {\pi_2^-}^{K_2^-}\\
=& \sum_{\pi_2^+\in \Pi_2} \dim (\text{triv}^{K_1} \otimes {\pi_2^+}^{K_2^+}) +  \sum_{\pi_2^-\in \Pi_2} \dim (\text{triv}^{K_1} \otimes {\pi_2^-}^{K_2^-}) \\
\leq& \sum_{\pi_2\in \Pi_2} \dim (\text{triv} \otimes \pi_2)^{K_1} \leq 1,
\end{align*}
where $\pi_2^+$ (resp. $\pi_2^-$) runs through all representations of $U(V^+)(F)$ (resp. $U(V^-)(F)$) in $\Pi_2$ and the last inequality follows from \cite{Waldspurger1991}, \cite{Tunnell}, \cite{Saito1993}, and \cite{Prasad}. Hence the uniqueness of newforms in the special case of $\dim V=2$.

\subsection{Proof of \Cref{cor_I-I}}
Let $W,V,G,\pi$ be as in \Cref{section_intro}.
Note that 
\[J_\pi(f)=\sum_{v\in ON(\pi)} \alpha(\pi(f)v,v),\quad f\in C^\infty_c(G(F)),\]
where $ON(\pi)$ is an orthonormal basis of $\pi$.
When $\pi_{n+1}^{K^c_{n+1}}$ is one-dimensional, the space $\pi^{K^c}$ is also one-dimensional. For $\phi_0\in \pi^{K^c}$ with $\pa{\phi_0, \phi_0}=1$, choose an orthonormal basis $ON(\pi)$ of $\pi$ that contains $\phi_0$. Note that for any $\phi_0 \neq \phi \in ON(\pi)$, we have $\pi(\id_{K^c})\phi \in \pi^{K^c}$ and 
\[\pa{\pi(\id_{K^c})\phi,\phi_0}=\pa{\int_{K^c} \pi(k)\phi\ dk, \phi_0}
=\int_{K^c} \pa{\pi(k)\phi, \phi_0} dk
=\int_{K^c} \pa{\phi, \pi(k^{-1})\phi_0} dk
=0,\]
i.e., $\pi(\id_{K^c})\phi=0$.
Hence $\alpha(\phi_0, \phi_0)=J_\pi(\id_{K^c})$ and for arbitrary $\phi \in \pi^{K^c}$ we have 
\[\alpha(\phi, \phi)=\pa{\phi, \phi}J_\pi(\id_{K^c}).\]
Then \Cref{thm:mainthm} gives the explicit value of the local integral $\alpha(\phi, \phi)$ in this special case, and this concludes the proof of \Cref{cor_I-I}.
 
\bibliographystyle{alpha}
\bibliography{ref} 
\printindex

\end{document}